\newtheorem{theorem}{Theorem}[section]
\newtheorem{lemma}[theorem]{Lemma}
\newtheorem{proposition}[theorem]{Proposition}
\newtheorem{corollary}[theorem]{Corollary}
\theoremstyle{definition}
\newtheorem{definition}[theorem]{Definition}
\theoremstyle{remark}
\newtheorem{remark}[theorem]{Remark}
\newenvironment{mmodel}[1]{%
  \model
}{\endmodel}
\numberwithin{equation}{section}
\newcommand{\e}{\mathrm{e}}
\newcommand{\unif}{\mathrm{unif}}
\newcommand{\RR}{\mathbb{R}}
\newcommand{\EE}{\mathbb{E}}
\newcommand{\CC}{\mathbb{C}}
\newcommand{\NN}{\mathbb{N}}
\newcommand{\ZZ}{\mathbb{Z}}
\newcommand{\cD}{\mathcal{D}}
\newcommand{\cH}{\mathcal{H}}
\newcommand{\cJ}{\mathcal{J}}
\newcommand{\eps}{\varepsilon}
\renewcommand{\theta}{\vartheta}
\renewcommand{\phi}{\varphi}
\newcommand{\indic}{\mathbf{1}}
\newcommand{\fh}{\mathfrak{h}}
\DeclareMathOperator{\Tr}{Tr}
\DeclareMathOperator{\Ran}{Ran}
\DeclareMathOperator{\Id}{Id}
\DeclareMathOperator*{\supp}{supp}
\DeclareMathOperator*{\Sym}{Sym}
\newcommand*\Diff[1]{\mathop{}\!\mathrm{d}#1}
\renewcommand{\restriction}{|}
\newcommand{\norm}[1]{\ensuremath{\left\|#1\right\|}}
\newcommand{\abs}[1]{\ensuremath{\left| #1\right|}}
\newcommand{\bes}{\begin{equation*}}
\newcommand{\ees}{\end{equation*}}
\newcommand{\be}{\begin{equation}}
\newcommand{\ee}{\end{equation}}
\newcommand{\eqs}[1]{\begin{align*}#1\end{align*}}
\newcommand{\eq}[1]{\begin{align}#1\end{align}}
\renewcommand{\div}{\mathrm{div}}
\newcommand{\grad}{\nabla}
\renewcommand{\L}{\Lambda}
\newcommand{\sfUCP}{\mathrm{sfUCP}}
\newcommand{\Ellip}{\mathrm{Ellip}}
\newcommand{\Lipschitz}{\mathrm{Lip}}
\newcommand{\evl}{\mathrm{evl}}
\def\blfootnote{\gdef\@thefnmark{}\@footnotetext}
\title[Unique continuation for the gradient]
{Unique continuation for the gradient of eigenfunctions, and Wegner estimates for random divergence-type operators}
\subjclass[2010]{Primary 35J15; Secondary 47B80, 35R60, 35R45, 35P15.}  
\keywords{Unique continuation for the gradient of eigenfunctions, random divergence-type operators, Wegner estimate, eigenvalue lifting.}
\author[A.~Dicke]{Alexander Dicke}
\address[A.D.]{
	Dortmund,
	Germany
}
\email{adicke.math@gmail.com}
\author[I.~Veseli\'c]{Ivan Veseli\'c}
\address[I.V.]{
	Technische Univer\-si\-t\"at Dortmund, 
	Germany
}
\email{ivan.veselic@mathematik.tu-dortmund.de}
\urladdr{\url{https://www.mathematik.tu-dortmund.de/lsix/research/analysis/}}
\begin{document}
\maketitle
%
%
\begin{abstract}
	We prove a scale-free quantitative unique continuation estimate for the gradient of eigenfunctions of divergence-type operators, i.e.,
	operators of the form $-\div A\grad$, where the matrix function $A$ is uniformly elliptic.
	The proof uses a unique continuation principle for elliptic second-order operators and a lower bound on the $L^2$-norm of the gradient
	of eigenfunctions corresponding to strictly positive eigenvalues.

	As an application, we prove an eigenvalue lifting estimate that allows us to prove a Wegner estimate for random divergence-type operators.
	Here our approach allows us to get rid of a restrictive covering condition that was essential in previous proofs of Wegner estimates for such models.
\end{abstract}
%
%
\section{Introduction} \label{sec:notation}

	The analysis of divergence-type operators is motivated, among others, 
	by the study of propagation of electromagnetic and classical waves in media, 
	including random  ones.

	Since such operators are elliptic second-order operators they obey unique 
	continuation estimates. 
	In fact, it was recently shown that they even satisfy so-called 
	\emph{scale-free unique continuation estimates}, that have  first 
	been established for 
	Schr\"odinger operators in, e.g., \cite{CombesHK-03,GerminetK-13,RojasMolinaV-13,Klein-13} and references cited therein.
	These estimates compare the $L^2$-norm 	of an eigenfunction $\psi$ 
	(or a function in the range of an appropriate spectral projector
	of the operator under consideration) on the full domain with its $L^2$-norm on a collection of small balls that are evenly distributed throughout the domain.
	For elliptic second-order operators, analogous, but somewhat weaker, bounds were proven in \cite{BorisovTV-17,TautenhahnV-20}.
	The methods used there rely on those developed for classical (i.e., local) unique continuation estimates for elliptic second-order differential operators,
	see, e.g., \cite{JerisonK-85,KenigRS-87,Kukavica-98,EscauriazaV-03,NakicRT-19} and the literature cited therein.

	An important application of scale-free unique continuation estimates is the theory of random operators,
	where unique continuation principles are used
	to prove for instance Wegner and initial length scale estimates, see \cite{CombesHK-03,BourgainK-05,RojasMolinaV-13,Klein-13,NakicTTV-18,TautenhahnV-20,SeelmannT-20}.
	However, in all these references and in most of the existing literature, the random part of the operator is assumed to be the zeroth order term.
	In other words, the randomness is introduced by adding a random potential.

	In this paper we consider more challenging operators where the leading order term is random. 
	This situation was studied in \cite{FigotinK-96} and \cite{Stollmann-98} as a model for propagation of waves in random media, see also \cite{FigotinK-97c}.
	These papers provide a Wegner estimate, assuming however that the random perturbations satisfy a covering condition.

	Our proof demonstrates how to remove this covering condition assumed in \cite{FigotinK-96,Stollmann-98} using a scale-free unique continuation estimate
	\emph{for the gradient of eigenfunctions}.
	In contrast to usual scale-free unique continuation estimates, we compare the $L^2$-norm of the gradient of an eigenfunction on the union of balls 
	described above with the $L^2$-norm of the eigenfunction on the full domain.
	To the best of our knowledge, previously only qualitative unique continuation for the gradient has been studied, see \cite{Nkashama-10}.
	We use ideas of the latter paper and combine it with a unique continuation estimate of \cite{TautenhahnV-20}
	to obtain the desired unique continuation estimate for the gradient.

	The energy zero is not a fluctuation boundary of random divergence-type operators.
	This is illustrated by the fact that if one restricts the operator to a cube and imposes Neumann boundary conditions,
	zero is an eigenvalue regardless of the random configuration.
	Therefore, we will not only exclude high energies from our consideration, but also energies close to zero.
	Consequently, our unique continuation estimate for the gradient is only valid for eigenfunctions corresponding to strictly positive eigenvalues.

	The structure of the paper is as follows: In the next Section~\ref{sec:main-results} we introduce the notation and formulate the main results concerning 
	unique continuation for the gradient, the proof of which is postponed to Section~\ref{sec:proof-ucpg}.
	Thereafter, in Section~\ref{sec:applications}, we consider applications of our unique continuation estimate for the gradient to random divergence-type operators.
	Section~\ref{sec:small-energies} is dedicated to stronger bounds valid for small energies. 
	Some of these are based on a remark made in \cite{TautenhahnV-20} which allows us to partly remove some assumptions of our main result.
	The proof of the latter remark is postponed to Appendix~\ref{sec:appendix}.
	Finally, in Section~\ref{sec:scaling} we present a scaled version of our main result, which
	was successfully applied in \cite{Dicke-21} and might be of interest in future research.
	(Note that albeit \cite{Dicke-21} was published before the present paper, it is actually a sequel work.)
	
	Let us emphasize that, having in mind future applications in the theory of Anderson localization 
	for divergence-type operators, we formulate a number
	of similar results displaying the explicit dependence of the constants on the model parameters. This is necessary because proofs of localization depend on a delicate interplay of a number or parameters.

\section{Notation and the main result} \label{sec:main-results}

	Let $d\in\NN$ and let $\L_L=(-L/2,L/2)^d$ denote the cube with side length $L\in\NN_\infty:=\NN\cup\{\infty\}$, i.e., $\L_\infty=\RR^d$.
	Let $B(x,r)$ denote the ball with center $x\in\RR^d$ and radius $r\geq 0$ and let $A\colon \L_L\to\RR^{d\times d}$ be a matrix function such that $A(x)$ 
	is symmetric for all $x\in\L_L$ and there are constants $\theta_{\Ellip,-},\theta_{\Ellip,+}>0$ such that
	\be\label{eq:elliptic}\tag{Ellip}
		\theta_{\Ellip,-}|\xi|^2
		\leq 
		\xi\cdot A(x)\xi 
		\leq \theta_{\Ellip,+}|\xi|^2
	\ee
	for all $x\in\L_L$ and all $\xi\in\RR^d$.
	We abbreviate $\theta_\Ellip:=\max\{\theta_{\Ellip,-}^{-1},\theta_{\Ellip,+}\}$.
	For $\cD(\fh^L)=H^1_0(\L_L)\subset H^1(\L_L)$, consider the form $\fh^L\colon \cD(\fh)\times\cD(\fh)\to\CC$ defined by
	\be
		\fh^L(u,v) := \int_{\L_L}\overline{\grad u}\cdot A\grad v,
		\label{eq:eigenvalue}
	\ee
	where $\grad$ denotes the weak gradient.
	The form $\fh^L$ is densely defined, closed, symmetric, and sectorial.
	Thus, there exists a unique selfadjoint operator $H^L(A)$ 
	associated with the form $\fh^L$.
	Let us emphasize that in general the operator domain $\cD(H^L(A))$ does not contain smooth functions which
	is the reason why we rely on the form approach.
	However, if the matrix function $A$ is Lipschitz continuous, i.e., if there is a constant 
	$\theta_\Lipschitz>0$ such that
	\be\tag{Lip}\label{eq:Lipschitz}
		\norm{A(x)-A(y)}_\infty\leq \theta_\Lipschitz\abs{x-y}
	\ee
	for all $x,y\in\L_L$, 
	we have $ C_c^\infty(\L_L)\subset\cD(H^L(A))$ and on $C_c^\infty(\L_L)$ the operator $H^L(A)$ coincides with the operator
	\[
		\cH_L\colon C_c^\infty(\L_L) \to L^2(\L_L), \quad u\mapsto -\div A\grad u.
	\]
	The latter illustrates that the operator $H^L(A)$ defined above is a realization of the divergence-type operator $-\div A\grad$ on the cube $\L_L$ and 
	due to the choice $\cD(\fh^L)=H^1_0(\L_L)$ the operator $H^L(A)$ has Dirichlet boundary conditions. 
	While the main body of the paper is devoted to Dirichlet boundary conditions, we treat at several instances Neumann boundary conditions 
	(at least for energies close to zero), see Subsection~\ref{ss:Neumann} below. 
	Recall that in this case the form domain is given by $\cD(\fh^L) = H^1(\L_L)$. 

	The notion of a scale-free unique continuation principle relies on the following definition used implicitly or explicitly in the literature on
	random operators, see, for instance, \cite{RojasMolinaV-13,Klein-13, NakicTTV-18, NakicTTV-20a, TautenhahnV-20}.
	
	\begin{definition} \label{def:equidistributed-seq}
		Let $G>0$ and $\delta\in (0,G/2)$.
		A sequence $Z=(z_j)_{j\in (G\ZZ)^d}\subset \RR^d$ is said to be 
		$(G,\delta)$-equidistributed if $B(z_j,\delta)\subset \L_G(j)$ for all $j\in \ZZ^d$.
		For $L\in \NN$ we set
		\[
			S_{Z,\delta}(\infty) := \bigcup_{j\in (G\ZZ)^d} B(z_j,\delta)
			\quad\text{and}\quad S_{Z,\delta}(GL):=S_{Z,\delta}(\infty)\cap\L_{GL}.
		\]
	\end{definition}

	Initially, we only consider the case of $(1,\delta)$-equidistributed sequences.
	The general case follows from this by a scaling argument, 
	see Section~\ref{sec:scaling} below.
	In order to formulate our main result, we need to introduce a technical assumption from \cite{TautenhahnV-20}:
	Given $L\in\NN$ and $A=(a_{j,k})_{j,k=1,\dots,d}$, we say that $A$ satisfies assumption \eqref{eq:DIR-assumption} if 
	\be
		\forall k,j\in\{1,\dots, d\},k\neq j\forall 
		x\in\overline{\L_L}\cap\overline{(\L_L+L e_k)}\colon a_{j,k}(x)=a_{k,j}(x)=0.
		\tag{\text{Dir}} \label{eq:DIR-assumption}
	\ee
	Let us emphasize that this assumption is in particular satisfied if 
	all off-diagonal coefficients of $A$ vanish on the boundary $\partial \L_L$ of the cube $\L_L$.

	With this notation at hand our first main result reads as follows.

	\begin{theorem} \label{thm:sfUCPG}
		Let $L\in\NN_\infty$. 
		Assume that $A$ satisfies \eqref{eq:elliptic}, \eqref{eq:Lipschitz} and \eqref{eq:DIR-assumption}, 
		let $0<E_-<E_+<\infty$, and let $\delta_0$ 
		be sufficiently small (depending only on $d,\theta_\Ellip$ and $\theta_\Lipschitz$).
		Then for all $\delta\in (0,\delta_0)$, there exists a constant $C_\sfUCP^\grad>0$ depending only on 
		$d,\theta_{\Ellip,-},\theta_{\Ellip,+},\theta_\Lipschitz,E_-,E_+$ and $\delta$ such that for all
		$E\in (E_-,E_+)$, all $\psi\in\cD(H^L(A))$ satisfying $H^L(A)\psi=E\psi$, and all $(1,\delta)$-equidistributed sequences $Z$ we have
		\be
			\norm{\grad \psi}_{L^2(S_{Z,\delta}(L))}^2 \geq C_{\sfUCP}^\grad\norm{\psi}_{L^2(\L_L)}^2.
			\label{eq:sfUCPG}
		\ee
		The constant $C_\sfUCP^\grad$ is given in \eqref{eq:sfUCPG-const} below.
	\end{theorem}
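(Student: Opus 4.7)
The plan is to combine the scale-free unique continuation estimate for eigenfunctions of elliptic second order operators from \cite{TautenhahnV-19}, applied at the reduced scale $\delta/2$, with a local identity drawn from the eigenvalue equation that converts an $L^2$-lower bound on $\psi$ into an $L^2$-lower bound on $\nabla\psi$. The underlying principle is that a strictly positive eigenvalue forces a quantitatively large gradient: testing the form version of $H^L(A)\psi=E\psi$ against $\chi^2\psi\in H^1_0(\L_L)$ and expanding the weak gradient yields, for any sufficiently regular cutoff $\chi$,
\[
E\int\chi^2|\psi|^2 \;=\; \int\chi^2\,\overline{\nabla\psi}\cdot A\nabla\psi \;+\; 2\int\chi\bar\psi\,\nabla\chi\cdot A\nabla\psi,
\]
so the $E$-weighted $L^2$-mass of $\psi$ on $\supp\chi$ is controlled by the gradient energy on $\supp\chi$ modulo a cross term supported on $\supp\nabla\chi$.

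Concretely, I would first invoke the sfUCP of \cite{TautenhahnV-19} on balls of radius $\delta/2$ (note that a $(1,\delta)$-equidistributed sequence is also $(1,\delta/2)$-equidistributed) to obtain
\[
\|\psi\|_{L^2(S_{Z,\delta/2}(L))}^2 \;\geq\; C_1\,\|\psi\|_{L^2(\L_L)}^2
\]
for some $C_1=C_1(d,\theta_\Ellip,\theta_\Lipschitz,E_-,E_+,\delta)>0$; the hypotheses \eqref{eq:Lipschitz} and \eqref{eq:DIR-assumption} together with the smallness requirement on $\delta_0$ are precisely what makes this estimate applicable. Then, for each $j$, I pick a smooth cutoff $\chi_j$ with $\chi_j\equiv 1$ on $B(z_j,\delta/2)$, $\supp\chi_j\subseteq B(z_j,\delta)$ and $\|\nabla\chi_j\|_\infty\lesssim 1/\delta$. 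Plugging $\chi_j$ into the identity above, estimating the cross term by Cauchy--Schwarz in the $A$-weighted inner product followed by Young's inequality with parameter $\lambda>0$, and using the ellipticity bounds $\theta_{\Ellip,-}|\nabla\psi|^2\leq\overline{\nabla\psi}\cdot A\nabla\psi\leq\theta_{\Ellip,+}|\nabla\psi|^2$ yields
\[
(1+\lambda)\,\theta_{\Ellip,+}\int\chi_j^2|\nabla\psi|^2 \;\geq\; E\int\chi_j^2|\psi|^2 \;-\; \frac{C\,\theta_{\Ellip,+}}{\lambda\,\delta^2}\int_{\supp\nabla\chi_j}|\psi|^2.
\]

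Summing over $j$, bounding the last term on the right by $\|\psi\|_{L^2(\L_L)}^2$ and lower-bounding the first term via the sfUCP estimate, I can absorb the negative contribution by choosing $\lambda$ of order $1/(E_-\,C_1\,\delta^2)$. This produces a bound of the shape
\[
\|\nabla\psi\|_{L^2(S_{Z,\delta}(L))}^2 \;\gtrsim\; \frac{E_-^2\,C_1^2\,\delta^2}{\theta_{\Ellip,+}^2}\,\|\psi\|_{L^2(\L_L)}^2,
\]
i.e.\ \eqref{eq:sfUCPG} with an explicit expression for $C_\sfUCP^\grad$. The main obstacle I anticipate is accurately tracking the dependence of this constant on all model parameters --- in particular the combined $\delta$-dependence inherited from $C_1$ of \cite{TautenhahnV-19} and from the $1/\delta^2$ factor produced by the absorption --- since the paper emphasizes that the explicit quantitative form of $C_\sfUCP^\grad$ is required for the subsequent Wegner estimate.
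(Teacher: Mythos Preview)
Your approach is essentially the same as the paper's: both combine the scale-free UCP of \cite{TautenhahnV-19} at radius $\delta/2$ with the form identity obtained by testing against $\chi_j^2\psi$. The one refinement in the paper is the direction of the Young inequality: instead of bounding the cross term by $\lambda\,\chi_j^2\,\overline{\nabla\psi}\cdot A\nabla\psi + \lambda^{-1}|\psi|^2\,\nabla\chi_j\cdot A\nabla\chi_j$ and absorbing the resulting negative $|\psi|^2$-term \emph{globally} via the sfUCP, the paper bounds it by $\gamma\,\chi_j^2|\psi|^2 + \gamma^{-1}\theta_{\Ellip,+}^2|\nabla\chi_j|^2|\nabla\psi|^2$ with $\gamma=E_-/2$ and absorbs \emph{locally} into $E\int\chi_j^2|\psi|^2$. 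This yields a clean ball-wise inequality (Lemma~\ref{lem:gradient}) and a final constant linear in $C_\sfUCP(\delta/2)$, whereas your global absorption forces $\lambda\sim (E_-C_1\delta^2)^{-1}$ and produces a constant quadratic in $C_1=C_\sfUCP(\delta/2)$; since $C_\sfUCP$ is of the form $(\delta/2)^{N(1+E_+^{2/3})}$, this squaring is a nontrivial loss for the explicit bound~\eqref{eq:sfUCPG-const} used downstream.
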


	\begin{remark}
		One can choose $\delta_0 = 2\bigl(330d\e^2\theta_\Ellip^{11/2}(\theta_\Ellip+1)^{5/3}(\theta_\Lipschitz+1)\bigr)^{-1}$ in the previous theorem, cf.~\cite{TautenhahnV-20}.
		Moreover, an inequality like \eqref{eq:sfUCPG} is valid for $\delta\geq\delta_0$ 
		as well but with a different constant, cf.~Remark~2.4 in 
		\cite{TautenhahnV-20} and Remark~\ref{rem:delta0-assumption} below.
	\end{remark}
	
	\begin{remark}
		The theorem fails for $E_-=0$. 
		More precisely, it is possible to construct a sequence of normalized eigenfunctions $\psi_L$ corresponding to eigenvalues converging to $0$ as $L$ 
		increases such that
		\[
			\lim_{L\to\infty} \norm{\grad\psi_L}_{L^2(\L_L)}=0.
		\]
		Thus, \eqref{eq:sfUCPG} must fail.
	\end{remark}

	The stated theorem contains two assumptions which one can hope to eliminate eventually: 
	Assumption \eqref{eq:DIR-assumption} is needed for a certain extension argument used in \cite{TautenhahnV-20}. 
	It is quite possible that this step could be replaced by a generalization of an extension (possibly by a smoothing procedure) which does not require 
	the assumption \eqref{eq:DIR-assumption}.
	Besides assumption \eqref{eq:DIR-assumption}, the Lipschitz continuity of $A$ needed in Theorem~\ref{thm:sfUCPG} is a drawback for the application we have in mind. 
	However, it is possible to use an approximation argument to allow for discontinuous coefficient matrices $A$ in certain situations.
	Furthermore, in the small energy regime the Lipschitz continuity and assumption \eqref{eq:DIR-assumption}
	are not needed, see the discussion in Section~\ref{sec:small-energies} below.
	
\section{Proof of Theorem~\ref{thm:sfUCPG}} \label{sec:proof-ucpg}

	We first prove a lemma that establishes a relation between an eigenfunction and its gradient. 
	The proof is inspired by the arguments in \cite{Nkashama-10}, where the author proves qualitative unique continuation for the gradient of eigenfunctions of 
	second-order elliptic operators.
	However, this requires a strict condition on the sign of the zeroth order term. 
	In our context, this condition is partly replaced by the assumptions for the energy interval.

	\begin{lemma} \label{lem:gradient}
		Let $0<E<\infty$, let $L,r>0$, and suppose that $A$ fulfills \eqref{eq:elliptic}.
		Then there is a constant $C^\grad(r) > 0$ such that 
		for all points $x_0\in\L_L$ with $B(x_0,2r)\subset\L_L$,
		and all solutions $\psi\in\cD(H^L(A))$
		of $H^L(A)\psi=E\psi$ we have
		\be\label{eq:gradient-easy}
			\norm{\grad \psi}_{L^2(B(x_0,2r))}^2
			\geq
			C^\grad(r)
			\norm{\psi}_{L^2(B(x_0,r))}^2
			.
		\ee
		The constant is given in \eqref{eq:gradient-const} below. 
	\end{lemma}

	\begin{proof}
		Let $\phi\colon \Lambda_L\to [0,1]$ be a smooth cutoff-function satisfying
		$\phi\equiv 1$ on $B(x_0,r)$,
		$\phi\equiv 0$ on $\Lambda_L\setminus B(x_0,2r)$,
		and $\left\| \grad\phi \right\|_\infty\leq 2/r$.
		By the definition of the operator $H^L(A)$ we have $H^L(A)\psi=E\psi$
		if and only if $\psi\in H^1_0(\L_L)$ and
		$\fh^L(v,\psi) = \langle v,E\psi\rangle_{L^2(\Lambda_L)}$
		for all $v\in H^1_0(\L_L)$.
		Using the last identity with $v = \psi\phi^2 \in H^1_0(\L_L)$ we get
		\be
			\bigl\langle \psi\phi^2,E\psi\bigr\rangle_{L^2(\Lambda_L)}
			\leq \int_{\L_L}\phi^2\left| \overline{\grad \psi}\cdot A\grad \psi \right|+2\phi \abs{\psi}\left| \grad\phi\cdot A\grad \psi \right|.
			\label{eq:gradient-form}
		\ee
		Using the ellipticity of $A$ as well as Cauchy-Schwartz and Young's inequality, the right hand side is bounded from above by
		\[
			\frac{E}{2} \bigl\langle\psi\phi^2,\psi \bigr\rangle_{L^2(\Lambda_L)}
			+\int_{\L_L}\Bigl(\theta_{\Ellip,+}\phi^2
			+\frac{2\theta_{\Ellip,+}^2}{E}\abs{\grad\phi}^2\Bigr)\abs{\grad \psi}^2
			.
		\]
		Hence, \eqref{eq:gradient-form} and the bound on the gradient of $\phi$ imply
		\[
			\Bigl(E-\frac{E}{2}\Bigr)\int_{\L_L}\phi^2 \abs{\psi}^2
			\leq \Bigl( \theta_{\Ellip,+} + \frac{2\theta_{\Ellip,+}^2}{E}
			\cdot\frac{4}{r^2} \Bigr) \int_{B(x_0,2r)}\abs{\grad \psi}^2.
		\]
		Since $\phi\geq \indic_{B(x_0,r)}$, this shows
		\[
			\frac{E}{2}\norm{\psi}_{L^2(B(x_0,r))}^2
			\leq
			\Bigl( \theta_{\Ellip,+} + \frac{8\theta_{\Ellip,+}^2}{r^2E} \Bigr)\norm{\grad \psi}_{L^2(B(x_0,2r))}^2
			,
		\]
		which is equivalent to inequality \eqref{eq:gradient-easy} with 
		\be
			\label{eq:gradient-const}
			C^\grad(r) = \frac{1}{2} \frac{r^2 E^2}{\theta_{\Ellip,+}r^2E+8\theta_{\Ellip,+}^2}
			.
			\qedhere
		\ee
	\end{proof}

	\begin{remark} \label{rem:gradient}
		\begin{enumerate}[(i)]
			\item The lemma gives in a sense a reverse Cacciopoli-type inequality valid under 
			certain conditions.
			\item The constant $C^\grad(r)$ is independent from the point $x_0$ 
			and only the ellipticity of $A$ was used.
			\item The proof applies to all operators that correspond to forms 
			$\fh^L$ as above with form domain $\cD(\fh^L)$
			satisfying $\psi\phi^2 \in \cD(\fh^L)$ for all cut-off functions $\phi$ 
			and all $\psi\in\cD(\fh^L)$.
		\end{enumerate} 
	\end{remark}

	The proof of Theorem~\ref{thm:sfUCPG} relies on the main result of \cite{TautenhahnV-20}.
	For the sake of completeness, we here recap a version of it suited to our purpose.

	\begin{theorem}[see \cite{TautenhahnV-20}] \label{thm:sfUCP-TV19}
		Assume that $A$ fulfills \eqref{eq:elliptic}, \eqref{eq:Lipschitz} and \eqref{eq:DIR-assumption}. 
		Then there exists a constant $N>0$ depending only on $d$, $\theta_{\Ellip}$ and $\theta_\Lipschitz$ such that for all $L\in\NN_\infty$, all 
		measurable and bounded $V\colon \L_L\to\RR$, all $\delta\in (0,\delta_0/2]$, 
		all $(1,\delta)$-equidistributed sequences $Z$, and all 
		$\psi\in\cD(H^L(A))$ satisfying $|H^L(A)\psi|\leq |V\psi|$ almost everywhere on $\L_L$, we have
		\be
			\norm{\psi}_{L^2(S_{Z,\delta}(L))}^2 \geq C_{\sfUCP}\norm{\psi}_{L^2(\Lambda_L)}^2.
			\label{eq:sfUCP-TV19}
		\ee
		Here
		\be
			C_{\sfUCP}=C_{\sfUCP}(\delta)=\delta^{N(1+\norm{V}_\infty^{2/3})}.
			\label{eq:sfUCP-TV19-const}
		\ee
	\end{theorem}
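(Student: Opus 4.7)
The plan is to prove Theorem \ref{thm:sfUCP-TV19} by following the blueprint for scale-free unique continuation established for Schr\"odinger operators in \cite{NakicTTV-18}, adapted to the divergence-form setting. The strategy splits into two distinct phases: a local quantitative unique continuation based on a Carleman estimate, followed by a global geometric iteration that exploits the lattice structure of $\L_L$ to produce a bound independent of $L$.

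First I would establish a Carleman estimate for the divergence-form operator $-\div A\grad$ on a small ball in $\RR^d$. After a linear change of variables that makes $A$ equal to the identity at the chosen center (exploiting the ellipticity \eqref{eq:elliptic}), one reduces to a perturbation of the Laplacian, with the error controlled by the Lipschitz bound \eqref{eq:Lipschitz}. A standard pseudoconvex radial weight $\phi$ and singular weight $w$ then yields an estimate of the form
\[
\tau^3 \int e^{2\tau\phi} w^{-1-2\tau} |u|^2 + \tau \int e^{2\tau\phi} w^{1-2\tau} |\grad u|^2 \leq C \int e^{2\tau\phi} w^{2-2\tau} \abs{\div A\grad u}^2
\]
for compactly supported $u$ in the ball and $\tau$ sufficiently large, the Lipschitz perturbation being absorbed into the dominant $\tau^3$-term. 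From this one derives in the usual way (cf.~\cite{Kukavica-98,NakicRT-19}) a three-balls inequality for solutions of $|H^L(A)\psi|\leq |V\psi|$, with the Hadamard-type interpolation exponent $\alpha\in(0,1)$ and prefactor depending on $\|V\|_\infty$ only through the required size of $\tau$.

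Second, I would propagate smallness globally: for each $j\in (\ZZ)^d$ with $\L_1(j)\subseteq\L_L$, iterate the three-balls inequality along a chain of overlapping balls leading from $B(z_j,\delta)$ to a fixed ball in $\L_1(j)$, and then along chains connecting neighboring cubes, covering all of $\L_L$. The number of steps per cube depends only on $\delta$, and crucially one sums (rather than multiplies) the contributions over $j$ via a telescoping argument so that the resulting lower bound compares $\|\psi\|_{L^2(S_{Z,\delta}(L))}^2$ to $\|\psi\|_{L^2(\L_L)}^2$ with a constant depending on $\delta$ but not on $L$. Optimizing the Carleman parameter via $\tau\sim \|V\|_\infty^{2/3}$, in the spirit of Bourgain--Kenig and Meshkov, produces the exponent $N(1+\|V\|_\infty^{2/3})$ announced in \eqref{eq:sfUCP-TV19-const}.

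The assumption \eqref{eq:DIR-assumption} enters when a cube $\L_1(j)$ touches $\partial\L_L$, where no interior three-balls inequality is directly available. Using \eqref{eq:DIR-assumption}, one extends $\psi$ and $A$ across the boundary faces by reflection: the vanishing of the mixed coefficients $a_{j,k}$ on $\partial\L_L$ guarantees that the reflected matrix $\tilde A$ remains Lipschitz and uniformly elliptic, and that the extension still solves a divergence-form equation of the same type on a slightly larger cube. The interior machinery then applies uniformly up to the boundary. The main obstacle in the whole argument, and the reason the Lipschitz hypothesis \eqref{eq:Lipschitz} is needed, is matching the regularity of $A$ with both the Carleman estimate (where the frozen-coefficient remainder must be absorbable by $\tau$) and the reflection procedure (where the Lipschitz constant must not blow up under the extension), all while keeping the number of iterations and hence the accumulated constants polynomial in $\delta^{-1}$ uniformly in $L$.
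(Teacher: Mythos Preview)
The paper does not prove Theorem~\ref{thm:sfUCP-TV19}; it is quoted verbatim from \cite{TautenhahnV-19} and used as a black box in the proof of Theorem~\ref{thm:sfUCPG}. There is therefore no ``paper's own proof'' to compare against.

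That said, your sketch is a faithful high-level outline of the argument actually given in \cite{TautenhahnV-19} (and its predecessors \cite{NakicRT-19,BorisovTV-15b,NakicTTV-18}): Carleman estimate with Lipschitz coefficients after freezing $A$ at a point, three-balls inequality, chaining across the lattice with a scale-free summation, and the reflection extension across $\partial\L_L$ enabled by \eqref{eq:DIR-assumption}. The identification of where each hypothesis enters --- \eqref{eq:Lipschitz} for absorbing the frozen-coefficient error and preserving regularity under reflection, \eqref{eq:DIR-assumption} for the extension, and the choice $\tau\sim\|V\|_\infty^{2/3}$ for the exponent --- is correct. As a sketch it is fine; as a proof it is of course far from complete (the Carleman estimate with variable coefficients and the scale-free telescoping are each substantial), but since the present paper simply imports the result, no more is required here.
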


	Combining this theorem with Lemma~\ref{lem:gradient} we prove Theorem~\ref{thm:sfUCPG}.
	
	\begin{proof}[Proof of Theorem~\ref{thm:sfUCPG}]
		The sequence $Z$ is clearly $(1,\delta/2)$-equi\-distri\-buted. 
		We apply Lemma~\ref{lem:gradient} followed by Theorem~\ref{thm:sfUCP-TV19} with $V\equiv E_+$ on $\L_L$ to obtain
		\be
			\norm{\grad \psi}_{L^2(S_{Z,\delta}(L))}^2 \geq C^{\grad}(\delta) \norm{\psi}_{L^2(S_{Z,\delta/2}(L))}^2
			\geq C^\grad_\sfUCP \norm{\psi}_{L^2(\L_L)}^2
			,
		\ee
		where the constant is given by
		\be
			C^\grad_{\sfUCP}=C^\grad_{\sfUCP}(\delta)
			=\frac{\delta^2E_-^2}{2\theta_{\Ellip,+}
			\bigl(8\theta_{\Ellip,+}+\delta^2E_-\bigr)}
			\Bigl(\frac{\delta}{2}\Bigr)^{N(1+E_+^{2/3})}.
			\label{eq:sfUCPG-const}\qedhere
		\ee
	\end{proof}

	\begin{remark}\label{rem:delta0-assumption}
		\begin{enumerate}[(i)]
			\item We can allow arbitrary $\delta\in (0,1/2]$ if we modify the definition of the constant in
			\eqref{eq:sfUCP-TV19-const} to
			\[
				C_{\sfUCP}=C_{\sfUCP}(\delta) = \bigl(\min\{\delta,\delta_0\}\bigr)^{N(1+\norm{V}_\infty^{2/3})}
			\]
			and similarly substitute $\min\{\delta,\delta_0\}$ for $\delta$ in the constant $C^\grad_{\sfUCP}$ 
			of Theorem~\ref{thm:sfUCPG} given in \eqref{eq:sfUCPG-const} above.
			\item Since $\delta\leq 1$, we see that
			\be
				C_1\Bigl(\frac{\delta}{2}\Bigr)^{2+N(1+E_+^{2/3})}
				\leq C^\grad_{\sfUCP}(\delta) 
				\leq C_2\Bigl(\frac{\delta}{2}\Bigr)^{N(1+E_+^{2/3})}
			\ee
			for constants $C_1,C_2$ depending only on $E_-$ and $\theta_{\Ellip,+}$.
		\end{enumerate}
	\end{remark}

\section{Applications} \label{sec:applications}

	Here we apply our main results in the theory of random operators. 
	As noted in the introduction, we want to understand the case where the second order term is random. 
	To this end, we need to understand how a modification of the coefficient matrix affects the eigenvalues.
	In particular, we investigate the movement of eigenvalues if we perturb the matrix function $A$ by some non-negative function  $W$ times the  identity matrix.
	The novelty is that we are able to treat the case that $W$ has, in some sense, small support. 
	In a second step, we use this result to prove a Wegner estimate for random divergence-type operators.

	Throughout this section we assume that $L\in\NN$, i.e., the geometric domain is a finite cube.
	Let us point out that in this situation the divergence-type operators 
	under consideration have compact resolvent and therefore purely discrete spectrum.

\subsection{Eigenvalue lifting} \label{sec:Eigenvalue-lifting}

	Given $\delta\in(0,1/2)$ and a $(1,\delta)$-equidistributed sequence $Z$, we want to investigate how the eigenvalues of the operator 
	$H_t^L:=H^L(A+t\, W\Id)$ vary as $t$ increases.
	Here $A$ is a matrix function as defined above that at least satisfies \eqref{eq:elliptic} and $W$ satisfies $W\geq\indic_{S_{Z,\delta}(L)}$.
	Fix $T>0$ and recall that for $t\in [0,T]$ the operator $H_t^L$ is associated to the form
	\be
		\fh_t^L\colon \cD(\fh_t^L)\times \cD(\fh_t^L)\to\CC, 
		\quad 
		(u,v)\mapsto \int_{\L_L}\overline{\grad u}\cdot (A+t\,W\Id)\grad v
		\label{eq:evl-form}
	\ee
	with $\cD(\fh_t^L)=H^1_0(\L_L)$.
	It is not hard to see that there is a domain $D\subset\CC$ such that $[0,T]\subset D$ and such that $\fh_t^L$ is densely defined and sectorial for all $t\in D$.
	Hence, the family of forms $(\fh_t^L)_{t\in D}$ turns out to be  a holomorphic family of type (a) in the sense of Kato \cite{Kato-80}. 
	Moreover, the quadratic form is increasing in $t$, i.e.,
	\be
		\fh_t^L(u,u)\leq \fh^L_s(u,u)
		\label{eq:evl-form-increasing}
	\ee
	for $t\leq s$ and we easily calculate
	\be
		\Bigl(\frac{\Diff}{\Diff{z}}\fh_{z}^L\Bigr)(u,u) 
		:= 
		\lim_{w\to z} \frac{\fh_{w}^L(u,u)-\fh_{z}^L(u,u)}{w-z} = \int_{\L_L}W|\grad u|^2.
		\label{eq:evl-form-derivative}
	\ee

	Thus, $(H_t^L)_{t\in D}$ is a holomorphic family of type (B) in the sense of Kato and $H^L_t$ is selfadjoint, lower-semibounded and has compact resolvent for $t\in [0,T]$.
	We denote by $(E_n^L(t))_{n\in\NN}$ the eigenvalues of $H_t^L$, enumerated non-decreasingly and counting multiplicities.

	In order to exploit our unique continuation estimate for the gradient, 
	we first assume that $W$ and $A$ are Lipschitz continuous and prove the following result. 

	\begin{theorem} \label{thm:evl}
		Let $T, \kappa, K_1, K_2> 0$, $\delta\in (0,1/2]$, $0<E_-<E_+<\infty$, and assume that $A$ satisfies \eqref{eq:elliptic}, \eqref{eq:Lipschitz} and \eqref{eq:DIR-assumption}.
		Then there is a constant $C_\evl>0$ such that for all $(1,\delta)$-equidistributed sequences $Z$, all Lipschitz continuous $W\in L^\infty(\L_L)$
		satisfying $\Lipschitz(W)\leq K_1$, $\norm{W}_\infty\leq K_2$ and $W\geq \kappa \indic_{S_{Z,\delta}(L)}$, 
		and all $n\in\NN$ such that $E_-<E_n^L(0)\leq E_n^L(T)<E_+$,
		we have
		\be
			E_n^L(t) \geq E_n^L(0)+\kappa t\,C_\evl.
			\label{eq:evl}
		\ee
		The constant $C_\evl$ is given in \eqref{eq:evl-const} below.
	\end{theorem}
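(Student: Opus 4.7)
The plan is to combine Kato's perturbation theory for holomorphic families of type (B) with the unique continuation estimate for the gradient, Theorem \ref{thm:sfUCPG}. As a preliminary step, for every $t\in[0,T]$ the perturbed coefficient matrix $A+tW\Id$ still satisfies \eqref{eq:elliptic}, \eqref{eq:Lipschitz} and \eqref{eq:DIR-assumption} with $t$-uniform parameters: the lower ellipticity constant $\theta_{\Ellip,-}$ is preserved (as $tW\Id\geq 0$), the upper one is at most $\theta_{\Ellip,+}+TK_2$, the Lipschitz constant is at most $\theta_{\Lipschitz}+TK_1$, and \eqref{eq:DIR-assumption} is preserved because $W\Id$ contributes only to the diagonal. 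I would therefore take $C_\evl$ to be the constant $C^{\grad}_{\sfUCP}$ of Theorem \ref{thm:sfUCPG} evaluated at these worst-case parameters, invoking Remark \ref{rem:delta0-assumption}(a) to accommodate arbitrary $\delta\in(0,1/2]$.

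Since $(\hh_t^L)_{t\in D}$ is a holomorphic family of type (a) and the operators $H_t^L$ have compact resolvent, Kato's theorem furnishes, locally in $t$, real-analytic branches $\lambda(t)$ of eigenvalues together with corresponding real-analytic normalized eigenfunctions $\phi(t)\in\cD(H_t^L)$. Differentiating the scalar identity $\lambda(t)=\hh_t^L(\phi(t),\phi(t))$ and using that the contributions of $\phi'(t)$ cancel thanks to $\|\phi(t)\|=1$, the form derivative \eqref{eq:evl-form-derivative} yields the Feynman--Hellmann identity
\[
	\lambda'(t) = \int_{\L_L} W\,\lb\grad\phi(t)\rb^2 \geq \norm{\grad\phi(t)}_{L^2(S_{Z,\delta}(L))}^2,
\]
where the inequality uses $W\geq\indic_{S_{Z,\delta}(L)}$. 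Whenever $\lambda(t)\in(E_-,E_+)$, applying Theorem \ref{thm:sfUCPG} with the adjusted parameters to the eigenfunction $\phi(t)$ of the divergence-type operator associated to $A+tW\Id$ gives $\lambda'(t)\geq C_\evl$.

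To conclude, the hypothesis $E_-<E_n^L(0)\leq E_n^L(T)<E_+$ combined with the monotonicity \eqref{eq:evl-form-increasing} forces $E_n^L(t)\in(E_-,E_+)$ for every $t\in[0,T]$; moreover, since $\lambda'(t)\geq 0$ for every branch, only the finitely many branches below $E_+$ at time $T$ can ever enter the window $(E_-,E_+)$ on $[0,T]$. Hence $t\mapsto E_n^L(t)$ is piecewise real-analytic and Lipschitz (in particular absolutely continuous), obtained from finitely many analytic branches by $n$-th smallest ordering, and outside a finite set of crossing points it satisfies $(E_n^L)'(t)\geq C_\evl$. Integrating this bound from $0$ to $t$ yields \eqref{eq:evl}. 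The main subtlety, addressed above, is matching the min-max ordered eigenvalues $E_n^L(t)$ with Kato's analytic branches across crossings and ensuring the hypotheses of Theorem \ref{thm:sfUCPG} hold uniformly in $t\in[0,T]$; both are settled by the finite and real-analytic branch structure together with the $t$-uniform parameter bounds.
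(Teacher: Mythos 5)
Your proposal is correct and follows essentially the same route as the paper: Kato's analytic perturbation theory for the type (B) family, the Feynman--Hellmann identity \eqref{eq:eigenvalue-derivative} combined with $W\geq\indic_{S_{Z,\delta}(L)}$, an application of Theorem \ref{thm:sfUCPG} with the $t$-uniform worst-case ellipticity and Lipschitz parameters, and integration of the resulting derivative bound over the finitely many analytic pieces of $E_n^L$. Your explicit remarks that \eqref{eq:DIR-assumption} survives the diagonal perturbation and that Remark \ref{rem:delta0-assumption}(a) handles $\delta$ up to $1/2$ are welcome details the paper leaves implicit.
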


	\begin{proof}
		There are two sequences $(\lambda_\ell)_{\ell\in\NN}$ and 
		$(\phi_\ell)_{\ell\in\NN}$ of analytic functions on $D\cap\RR$
		such that $H_t^L\phi_\ell(t)=\lambda_\ell(t)\phi_\ell(t)$ for all $t\in [0,T]$, 
		see \cite[VII. Remark 4.22 and VII. Theorem 3.9]{Kato-80}. 
		Here, the $(\lambda_\ell(t))_\ell$ are all the repeated eigenvalues of $H_t^L$ with corresponding normalized eigenfunctions $(\phi_\ell(t))_\ell$.
		The functions $E_n^L$ are formed by connecting several $\lambda_\ell$ in a continuous manner, i.e., $E_n^L$ may jump from one $\lambda_\ell$ 
		to another at every crossing point between the different $\lambda_\ell$.
		Since the $\lambda_\ell$ are analytic and the $H_t^L$ are lower-semibounded 
		there are at most finitely many points where these jumps occur.

		More precisely, there are $M\in\NN$, $\ell_1,\dots,\ell_M\in\NN$, and $0=t_1<\dots<t_{M+1}=T$ such that
		\be
			E_n^L(t)=\lambda_{\ell_j}(t) \quad\text{for all}\quad t\in [t_j,t_{j+1}],
			\label{eq:evl-connecting-eigenvalues}
		\ee
		where the overlap is continuous. 
		Especially, the function $E_n^L$ agrees piecewise with some $\lambda_\ell$ and is therefore piecewise analytic. 
		The argument in \cite[proof of VII.Theorem 4.21]{Kato-80} shows
		that
		\be
			\Bigl(\frac{\Diff}{\Diff{t}} E_n^L\Bigr)(t) 
			= \Bigl(\frac{\Diff}{\Diff{t}} \fh_t^L\Bigr)\bigl(\phi_\ell(t),\phi_\ell(t)\bigr) 
			\label{eq:eigenvalue-derivative}
		\ee
		if $E_n^L(\cdot)=\lambda_\ell(\cdot)$ in a neighborhood of $t$. 
		Only at the points $t\in\{t_1,\ldots, t_{N+1}\}$, at which a crossing from one $\lambda_\ell$ to another occurs, it is possible that $E_n^L(\cdot)$ 
		is not differentiable.
		Hence, equality \eqref{eq:eigenvalue-derivative} holds for all but finitely many $t\in [0,T]$.

		Using identity \eqref{eq:evl-form-derivative}, we calculate that 
		the right hand side of \eqref{eq:eigenvalue-derivative} satisfies
		\be
			\Bigl(\frac{\Diff}{\Diff{t}} \fh_t^L\Bigr)\bigl(\phi_\ell(t),\phi_\ell(t)\bigr) 
			= 
			\int_{\L_L}W|\grad\phi_\ell(t)|^2
			\geq \kappa \norm{\grad\phi_\ell(t)}_{L^2(S_{Z,\delta}(L))}^2.
			\label{eq:lifting}
		\ee
		We want to apply the unique continuation estimate for the gradient on the right hand side of \eqref{eq:lifting}. 
		To this end, we need to verify the assumptions of Theorem~\ref{thm:sfUCPG}:
		For fixed $t$, the matrix function $x\mapsto A(x)+tW(x)\Id$ is Lipschitz continuous with Lipschitz-constant 
		$\tilde{\theta}_\Lipschitz(t)=\theta_\Lipschitz+tK_1$ and elliptic with ellipticity constants given 
		by $\tilde{\theta}_{\Ellip,+}(t)=\theta_{\Ellip,-}$ and 
		$\tilde{\theta}_{\Ellip,+}(t)= \theta_{\Ellip,+}+tK_2$.
		Moreover, the assumption on $E_n^L(\cdot)$ shows that $\phi_\ell(t)$ is a normalized eigenfunction 
		whose eigenvalue lies between $E_-$ and $E_+$.
		
		Applying Theorem~\ref{thm:sfUCPG} for all $t\in [0,T]$ provides us with a constant
		$C_\sfUCP^\grad=C_\sfUCP^\grad(\delta,d,\tilde{\theta}_{\Ellip,\pm}(t),\tilde{\theta}_\Lipschitz(t))$ such that
		\[
			\norm{\grad\phi_\ell(t)}_{L^2(S_{Z,\delta}(L))}^2 \geq C_\sfUCP^\grad\norm{\phi_\ell(t)}_{L^2(\L_L)}^2 =   C_\sfUCP^\grad.
		\]
		Thus,
		\[
			\norm{\grad\phi_\ell(t)}_{L^2(S_{Z,\delta}(L))}^2 \geq C_\evl:=  \inf_{t\in [0,T]}C_\sfUCP^\grad
		\]
		where $C_\evl$ does not depend on $t$.
		Indeed,
		\be
			C_\evl=\frac{\delta^2E_-^2}{2\tilde{\theta}_{\Ellip,+}(T)
			\bigl(8\tilde{\theta}_{\Ellip,+}(T)+\delta^2E_-\bigr)}
			\Bigl(\frac{\delta}{2}\Bigr)^{N(1+E_+^{2/3})},
			\label{eq:evl-const}
		\ee
		where $N=N(\tilde{\theta}_\Lipschitz(T),\tilde{\theta}_{\Ellip,\pm}(T))$. 
		Consequently,
		\be
			\Bigl(\frac{\Diff}{\Diff{t}} E_n^L\Bigr)(t) \geq \kappa C_\evl \label{eq:lower-bound}
		\ee
		for all $t \in (0,T)\setminus \{t_2,\ldots, t_M\}$. 
		Finally, for fixed $t\in [0,T]$, we let $\tilde{M}\in\{1,\dots,M\}$ be the smallest 
		number such that $t_n>t$ for all $n> \tilde{M}$ and obtain
		\eqs{
			E_n^L(t) &
			 = E_n^L(0)+\sum_{j=1}^{\tilde{M}-1}\int_{t_j}^{t_{j+1}} \frac{\Diff}{\Diff{s}} E_n^L(s) \Diff{s}
			 +\int_{t_{\tilde{M}}}^t\frac{\Diff}{\Diff{s}} E_n^L(s) \Diff{s}\\
			 &\geq  E_n^L(0)+\kappa t\,C_\evl.\qedhere
		}
	\end{proof}

	\begin{remark}
		Let us stress the difference of our situation to the one we encounter for Schr\"odinger operators with alloy-type potentials:
		To that end, let us define $\tilde{H}^L_t=-\Delta+t\,W$ with its associated form $\tilde{\fh}_t^L$ on $H^1_0(\L_L)$.
		Using in this case scale-free unique continuation estimates for eigenfunctions of $\tilde{H}^L_t$
		that were proven in, e.g., \cite{RojasMolinaV-13,NakicTTV-18}, we obtain
		\[
			\Bigl(\frac{\Diff}{\Diff{t}} \tilde{\fh}_t^L\Bigr)(\tilde{\phi}_\ell(t),\tilde{\phi}_\ell(t)) = \int_{\L_L}W|\tilde{\phi}_\ell(t)|^2
			\geq \norm{\tilde{\phi}_\ell(t)}_{L^2(S_{Z,\delta}(L))}^2 \geq C_\sfUCP.
		\]
		In particular, we see that in the latter case the gradient does not appear, in contrast to the present situation in \eqref{eq:lifting}.
	\end{remark}
	
	\begin{remark}
		If we assume that $W\geq 1$ on the whole cube $\L_L$, Theorem~\ref{thm:evl} is already implicitly stated
		in \cite{Stollmann-98}. 
		Unique continuation for the gradient is not needed and consequently we do not need the Lipschitz 
		continuity of the coefficients either. 
		In fact, in this particular case \eqref{eq:evl} is a simple consequence of the following elementary argument: 
		As shown in the previous proof we have
		\[
			\Bigl(\frac{\Diff}{\Diff{t}} E_n^L\Bigr)(t) \geq \norm{\grad\psi_n(t)}_{L^2(\L_L)}^2
		\]
		where $H^L_t\psi_n(t)=E_n^L(t)\psi_n(t)$. 
		Using the upper bound on the coefficient matrix and the lower bound on the eigenvalue we derive 
		\eqs{
			\langle E_n^L(t)\psi_n(t),\psi_n(t) \rangle_{L^2(\L_L)}
			& = \langle H^L_t\psi_n(t),\psi_n(t) \rangle_{L^2(\L_L)} \\
			& =\langle (A+t\,W)\grad\psi_n(t),\grad\psi_n(t) \rangle_{L^2(\L_L)} \\
			& \leq (\theta_{\Ellip,+}+T\norm{W}_\infty)\norm{\grad\psi_n(t)}_{L^2(\L_L)}^2.
		}
		Hence,
		\[
			\Bigl(\frac{\Diff}{\Diff{t}} E_n^L\Bigr)(t)
			\geq \frac{E_-}{\theta_{\Ellip,+}+T\norm{W}_\infty}.
		\]
	\end{remark}

	In case that only the matrix $A$ satisfies \eqref{eq:Lipschitz}, we may use a monotonicity argument to prove eigenvalue lifting for perturbations $W$ which 
	are not Lipschitz continuous. 
	This is the tenor of our next result. 

	\begin{corollary} \label{cor:evl-discont-perturbation}
		Let $T>0$, $\delta\in (0,1/2)$, and $0<E_-<E_+<\infty$.
		Assume that $A$ satisfies \eqref{eq:elliptic}, \eqref{eq:Lipschitz} 
		and \eqref{eq:DIR-assumption}.
		Then, there is a constant $\hat{C}_\evl>0$ such that for all 
		$(1,\delta)$-equidistributed sequences $Z$, all
		$W\in L^\infty(\L_L)$ satisfying $W\geq \indic_{S_{Z,\delta}(L)}$, 
		and all $n\in\NN$ such that $E_-<E_n^L(0)\leq E_n^L(T)<E_+$
		we have
		\be
			E_n^L(t) \geq E_n^L(0)+t\,\hat{C}_\evl.
			\label{eq:evl-discontinuous}
		\ee
	\end{corollary}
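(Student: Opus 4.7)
The approach is to approximate the (possibly discontinuous) perturbation $W$ from below by a Lipschitz function $\tilde W$ that still controls the indicator of a union of slightly smaller balls, apply Theorem~\ref{thm:evl} to $\tilde W$, and recover \eqref{eq:evl-discontinuous} by monotonicity of the eigenvalues in the perturbation.

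Concretely, for each index $j$ for which $B(z_j,\delta)\cap\L_L$ is non-empty I would pick a Lipschitz cutoff $\phi_j\colon\L_L\to[0,1]$ with $\phi_j\equiv 1$ on $B(z_j,\delta/2)$, $\supp\phi_j\subseteq B(z_j,\delta)$, and $\Lipschitz(\phi_j)\le 2/\delta$, and set $\tilde W:=\sum_j\phi_j$. Since $B(z_j,\delta)\subseteq\L_1(j)$ by the $(1,\delta)$-equidistribution of $Z$ and the unit cubes $\L_1(j)$ are pairwise disjoint, the bumps $\phi_j$ have pairwise disjoint supports, so $\tilde W$ is Lipschitz continuous with $\Lipschitz(\tilde W)\le 2/\delta$, satisfies $0\le\tilde W\le 1$, and obeys $\tilde W\ge\indic_{S_{Z,\delta/2}(L)}$. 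Moreover, $\tilde W\le\indic_{S_{Z,\delta}(L)}\le W$ pointwise on $\L_L$.

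Denoting by $\tilde E_n^L(t)$ the $n$-th eigenvalue of $H^L(A+t\tilde W\Id)$, the pointwise bound $\tilde W\le W$ yields the corresponding form inequality and hence, by the min-max principle, $\tilde E_n^L(t)\le E_n^L(t)$ for every $t\ge 0$. Because $\tilde W\ge 0$, the monotonicity \eqref{eq:evl-form-increasing} applies to $\tilde W$ as well and gives $\tilde E_n^L(0)\le\tilde E_n^L(T)$; together with $\tilde E_n^L(0)=E_n^L(0)\in(E_-,E_+)$ and $\tilde E_n^L(T)\le E_n^L(T)<E_+$ this verifies the hypotheses of Theorem~\ref{thm:evl} for $\tilde W$ with equidistribution parameter $\delta/2$ and bounds $K_1=2/\delta$, $K_2=1$. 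Invoking Theorem~\ref{thm:evl} and combining with $\tilde E_n^L(t)\le E_n^L(t)$ I would obtain
\[
    E_n^L(t)\ge\tilde E_n^L(t)\ge\tilde E_n^L(0)+t\,\hat{C}_\evl=E_n^L(0)+t\,\hat{C}_\evl,
\]
where $\hat{C}_\evl$ arises from \eqref{eq:evl-const} by substituting $\delta/2$ for $\delta$ and the parameters of $\tilde W$ into $\tilde\theta_{\Ellip,+}(T)$, $\tilde\theta_\Lipschitz(T)$ and $N$.

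No step in this plan is genuinely difficult; the only point requiring care is that the Lipschitz constant of $\tilde W$ grows like $1/\delta$ and thus worsens the explicit $\delta$-dependence of $\hat{C}_\evl$ through the exponent $N$ coming from Theorem~\ref{thm:sfUCPG}. This is, however, harmless for the qualitative statement \eqref{eq:evl-discontinuous}.
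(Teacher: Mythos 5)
Your proposal is correct and follows essentially the same route as the paper: replace $W$ by a Lipschitz minorant $\tilde W$ that still dominates $\indic_{S_{Z,\delta/2}(L)}$, apply Theorem~\ref{thm:evl} with parameter $\delta/2$, and transfer the bound back via the min--max principle. The only difference is that you construct $\tilde W$ explicitly from disjoint bump functions while the paper merely asserts its existence; this is a harmless (indeed welcome) elaboration.
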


	\begin{proof}
		There exists an auxiliary Lipschitz continuous function $\tilde{W}$
		with Lipschitz-constant $\Lipschitz(\tilde{W}) = 2$ satisfying 
		$W\geq \delta \min\{1,W\} \geq \tilde{W}\geq \indic_{S_{Z,\delta/2}(L)}$.
		We define the auxiliary operator
		$\tilde{H}_t^L:=H^L(A+t\,\tilde{W}\Id)$ and denote its eigenvalues 
		enumerated non-decreasingly and counting multiplicities
		by $\tilde{E}^L_n(t)$. 
		Then we have by monotonicity
		\[
			E_-<E_n^L(0)=\tilde{E}_n^L(0)\leq\tilde{E}_n^L(T)\leq E_n^L(T)<E_+.
		\]
		Since the matrix function $x\mapsto A(x)+\,\tilde{W}(x)\Id$ has
		Lipschitz-constant at most $\theta_\Lipschitz'=\theta_\Lipschitz + 2T$ and ellipticity
		constants $\theta_{\Ellip,-}'=\theta_{\Ellip,-}$ and 
		$\theta_{\Ellip,+}' = \theta_{\Ellip,+} + T$, 
		Theorem~\ref{thm:evl} shows that
		\[
			\tilde{E}_n^L(t) \geq \tilde{E}_n^L(0) + C_\evl
		\]
		with 
		\[
			\hat{C}_\evl = 
			\frac{\delta^2E_-^2}{2(\theta_{\Ellip,+}+T)(32\theta_{\Ellip,+}+32T+\delta^2E_-)}
			\Bigl(\frac{\delta}{4}\Bigr)^{N(1+E_+^{2/3})},
		\]
		where $N$ is a constant that depends on 
		$\delta$, $T$, $\theta_{\Ellip,\pm}$ and $\theta_\Lipschitz$.
		The minimax-principle finally implies
		\[
			E_n^L(t) \geq \tilde{E}_n^L(t) \geq \tilde{E}_n^L(0) + C_\evl =  E_n^L(0) + C_\evl.
			\qedhere
		\]
	\end{proof}

\subsection{Wegner estimate}\label{subsec:Wegner}

	In this section we prove a Wegner estimate for random divergence-type operators.
	We consider a generalized alloy-type random perturbation as introduced in \cite{RojasMolinaV-13,Klein-13}.
	Let us introduce the model at hand:

	\begin{mmodel}{(A)}[Generalized alloy-type] \label{model:A}
		Let $0<\delta_-<\delta_+<\infty$, $\delta_-\in (0,1/2)$, 
		$0<C_-<C_+<\infty$, $ K >0$, and let $\omega=(\omega_j)_{j\in\ZZ^d}$ be a sequence of independent
		random variables with probability distributions $(\mu_j)_{j\in\ZZ^d}$ 
		satisfying $\supp\mu_j\subset [0,m]$ for some $m>0$. 
		Denote by $s\colon [0,\infty)\to[0,1]$ the global modulus of 
		continuity of the family $(\mu_j)_{j\in\ZZ^d}$, that is
		\be
			\sup_{j\in\ZZ^d}\{\mu_j\bigl([E-\eps/2,E+\eps/2]\bigr):E\in\RR\} \leq s(\eps)\quad\text{for}\quad\eps > 0.
			\label{eq:modulus-continuity}
		\ee
		Moreover, let $Z=(z_j)_{j\in\ZZ^d}$ be a $(1,\delta_-)$-equidistributed sequence and 
		let $(u_j)_{j\in\ZZ^d}$ be a sequence of functions on $\L_L$ satisfying
		\be
			C_-\indic_{B(z_j,\delta_-)}\leq u_j\leq C_+\indic_{B(z_j,\delta_+)}.
			\label{eq:single-site-perturbations}
		\ee
		Assume that each $u_j$ is Lipschitz continuous with Lipschitz constant $K$. 
		Lastly, we fix a matrix-function $A$ that satisfies \eqref{eq:elliptic}, 
		\eqref{eq:Lipschitz} and \eqref{eq:DIR-assumption} for all $\L_L$, $L\in\NN$.
		Then, with 
		\be
			V_\omega(x):=\sum_{j\in\ZZ^d}\omega_ju_j(x)
			\quad\text{and}\quad A_\omega:=A+V_\omega\Id
			\label{eq:crooked-Alloy-type}
		\ee
		we consider the random divergence-type operator $H^L_\omega:=H^L(A_\omega)$.
	\end{mmodel}

\begin{remark}\label{rm:uniform}
	In what follows, we refer to the parameters $\delta_\pm$, $C_\pm$, $K$, $m$, 
	$\theta_{\Lipschitz}$, $\theta_{\Ellip,\pm}$, and $d$ introduced in the
	hypothesis above as the \emph{model parameters}.
\end{remark}

A function $V_\omega$ as in \eqref{eq:crooked-Alloy-type} is clearly non-negative and bounded.
More precisely, its $L^\infty$-norm depends only on model parameters.
Hence, the random matrix function $A_\omega$ is elliptic (uniformly
with respect to $\omega$ and $x$) with ellipticity constants
\[
	\tilde{\theta}_{\Ellip,-}
	=
	\theta_-
	\quad\text{and}\quad
	\tilde{\theta}_{\Ellip,+}
	=
	\theta_+ +(2+\delta_+)^d m C_+
\]
and Lipschitz continuous with Lipschitz constant
$\tilde{\theta}_{\Lipschitz} = \theta_\Lipschitz + (2+\delta_+)^d m K$.
Hence, there is a $N_{\unif}$ depending only on $\tilde{\theta}_{\Ellip,-}, \tilde{\theta}_{\Ellip,+}, \tilde{\theta}_{\Lipschitz}$
such that \eqref{eq:evl} holds with the constant $C_\evl$ from \eqref{eq:evl-const} where $N = N_{\unif}$ for all configurations $A_\omega$.
In particular, $\norm{A_\omega}_{\Lipschitz}$ and $N_{\unif}$ are uniformly bounded by a constant depending
only on the model parameters.

\begin{remark}\label{rem:simpliefied-evl-constant}
	In fact, one can easily show that \eqref{eq:evl} holds with the simplified constant 
	\bes
		C_\evl = E_-^2\delta^{N_{\unif}(1+E_+^{2/3})}
	\ees
	when adjusting $N_{\unif}$ appropriately. 
	For convenience, we only use this constant in the rest of this section. 
\end{remark}

	We denote the eigenvalues of the operator $(H^L_\omega)_\omega$ by 
	$E_n^L(\omega)$, enumerated non-decreasingly and counting multiplicities.
	Using the results on eigenvalue lifting, we may prove a Wegner estimate for the random divergence-type operator $(H^L_\omega)_\omega$.

	\begin{remark}
		In general, the random operator $(H^L_\omega)_\omega$ is non-ergodic. 
		However, if we assume that the random variables $(\omega_j)$ are iid, 
		the single-site perturbations satisfy $u_j=u(\cdot-j)$ for some bounded,
		compactly supported, and Lipschitz continuous function $u\geq \indic_{B(0,\delta_-)}$, and the matrix $A$ is $\ZZ^d$-periodic, then the random operator is ergodic and has therefore
		almost sure spectrum.
		Let us emphasize that for $\ZZ^d$-periodic $A$ the assumption 
		\eqref{eq:DIR-assumption} is satisfied for all cubes $\L_L$ if it is satisfied for $\L_1$.
	\end{remark}
	
	\begin{theorem}[Wegner estimate] \label{thm:Wegner-general}
		Suppose that $(H^L_\omega)_\omega$ is of type~\ref{model:A} and let $0<E_-<E_+<\infty$. 
		Then there exists a constant $C_W>0$ such that for every 
		$L\in\NN$, $E>0$, and $0<\eps\leq 1/4$ satisfying $[E-7\eps,E+7\eps]\subset [E_-,E_+]$ we have
		\be
			\EE\Bigl[\Tr\chi_{[E-\eps,E+\eps]}\bigl(H_\omega^L\bigr)\Bigr] \leq C_W s(\eps)L^{2d}.
			\label{eq:Wegner-est}
		\ee
	\end{theorem}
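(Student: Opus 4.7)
The plan is to follow the classical Stollmann-type Wegner scheme, combining the eigenvalue lifting of Corollary \ref{cor:evl-discont-perturbation} with a spectral shift argument for monotone product measures. The factor $|\Lambda_L|^2$ in the bound decomposes as the product of a Weyl-type count of eigenvalues in the window (one factor $|\Lambda_L|$) and the number of random coordinates affecting $H^L_\omega$ (again $|\Lambda_L|$), with $s(\eps)$ entering through the modulus of continuity of the $\mu_j$. Concretely, uniform ellipticity of $A_\omega$ (with the constants $\theta_{\Ellip,-}$ and $\theta_{\Ellip,+}' = \theta_{\Ellip,+} + m(2+\delta_+)^d C_+$ recorded after Model \ref{model:A}) together with a standard min-max/Dirichlet--Neumann bracketing argument yield a deterministic Weyl bound $\Tr\chi_{(-\infty, E_+]}(H_\omega^L) \leq N_0 |\Lambda_L|$ with $N_0 = N_0(E_+, \theta_{\Ellip,-})$. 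Expanding the trace as $\sum_n \indic_{[E-\eps, E+\eps]}(E_n^L(\omega))$, the task reduces to proving $\PP(E_n^L(\omega) \in [E-\eps, E+\eps]) \leq C |\Lambda_L| s(\eps)$ uniformly in $n \leq N_0|\Lambda_L|$.

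The core of the proof is a collective shift of the random parameters. Let $J_L := \{j \in \ZZ^d : B(z_j, \delta_+) \cap \Lambda_L \neq \emptyset\}$; only the coordinates $(\omega_j)_{j \in J_L}$ enter the operator and $|J_L| \leq N_1|\Lambda_L|$. Under the shift $\omega \mapsto \omega + \tau \mathbf{1}_{J_L}$ the coefficient matrix changes by $\tau W_{J_L}\Id$ with $W_{J_L} := \sum_{j \in J_L} u_j$ satisfying $W_{J_L} \geq C_- \indic_{S_{Z, \delta_-}(L)}$ and $\|W_{J_L}\|_\infty \leq (2+\delta_+)^d C_+$ on $\Lambda_L$. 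Applying Corollary \ref{cor:evl-discont-perturbation} to the family $\tau \mapsto H^L(A + V_\omega + \tau W_{J_L}\Id)$ (with $C_-^{-1} W_{J_L}$ playing the role of $W$ in the corollary) produces a lifting constant $c = C_- \hat{C}_\evl > 0$ depending only on the model parameters, such that whenever $E_n^L(\omega), E_n^L(\omega + \tau \mathbf{1}_{J_L}) \in (E_-, E_+)$,
\[
E_n^L(\omega + \tau \mathbf{1}_{J_L}) \geq E_n^L(\omega) + \tau c.
\]
The buffer $[E - 3\eps, E + 3\eps] \subseteq [E_-, E_+]$ is precisely what keeps the shifted eigenvalues inside $(E_-, E_+)$ for all $\tau \in [0, \tau^*]$ with $\tau^* := 2\eps/c$, using the deterministic upper bound on eigenvalue motion coming from $\|W_{J_L}\|_\infty$. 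Since $\omega \mapsto E_n^L(\omega)$ is monotone non-decreasing in each $\omega_j$ and the shift by $\tau^*$ moves any eigenvalue from $[E-\eps, E+\eps]$ above $E + \eps$, Stollmann's spectral shift lemma \cite{Stollmann-98} applied to $\bigotimes_{j \in J_L} \mu_j$ delivers $\PP(E_n^L(\omega) \in [E-\eps, E+\eps]) \leq |J_L| \cdot s(2\eps/c)$; absorbing the factor $2/c$ into the final constant and summing over the $\leq N_0|\Lambda_L|$ admissible $n$ yields the announced bound $C_W |\Lambda_L|^2 s(\eps)$.

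The principal technical hurdle is applying Corollary \ref{cor:evl-discont-perturbation} with base matrix $A + V_\omega$ rather than $A$ alone: the corollary as stated presumes the base satisfies \eqref{eq:Lipschitz}, whereas $V_\omega$ need not be Lipschitz in our model since the single-site profiles $u_j$ are only bounded measurable. A regularization of each $u_j$ by a Lipschitz majorant, combined with minimax monotonicity in the coefficient matrix, is needed to reduce to the Lipschitz setting; the delicate point is that the Lipschitz constants of naive mollifications diverge as the regularization parameter is removed, so one has to argue that the lifting constant obtained from the regularized problem can be transferred to the original operator (e.g.\ via a strong resolvent convergence argument on the finite-volume boxes) without degenerating in the limit. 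This is the step that requires the most technical care and is likely to be the hardest part of the proof.
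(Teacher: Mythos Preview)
Your overall architecture---Weyl bound for the eigenvalue count, Stollmann's monotone-shift lemma for the probability of hitting the window, and Corollary~\ref{cor:evl-discont-perturbation} for the collective lifting---is a legitimate alternative to the paper's route, which instead smooths $\chi_{[E-\eps,E+\eps]}$ by a switch function $\rho_\eps$, invokes the same lifting to replace $\rho_\eps(H^L_\omega-E+2\eps)$ by $\rho_\eps(H^L_{\omega+\eps' e}-E-2\eps)$, and then telescopes over the sites in $Q$ using the partial-integration Lemma~\ref{lem:PI-singular-distributions}. Both schemes feed on exactly the same eigenvalue-lifting input, so the structural difference is cosmetic. One small slip: writing ``absorb $2/c$ into the constant'' to pass from $s(2\eps/c)$ to $C\cdot s(\eps)$ is illegitimate for a general modulus of continuity $s$; the honest bound is $C_W\,s(C'\eps)\,|\Lambda_L|^2$. (The paper commits the same imprecision in identifying \eqref{eq:Wegner-const}.)

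The substantive point is the one you flag at the end, and here your proposed repair does not close the gap. Corollary~\ref{cor:evl-discont-perturbation} is stated for a Lipschitz \emph{base} $A$; its proof works only because the approximating family $\tilde H^L_t=H^L(A+t\tilde W)$ and the true family $H^L_t=H^L(A+tW)$ \emph{coincide at $t=0$}, so that $\tilde E_n^L(0)=E_n^L(0)$ and the one-sided minimax bound $E_n^L(t)\ge\tilde E_n^L(t)$ suffices. In the Wegner situation the base is $A+V_\omega$, and any Lipschitz regularisation $A+\tilde V_\omega$ of it produces a family whose starting eigenvalues $\tilde E_n^L(\omega)$ differ from $E_n^L(\omega)$; the sandwich then yields only $E_n^L(\omega+\eps e)\ge\tilde E_n^L(\omega)+\eps C$ with $\tilde E_n^L(\omega)\le E_n^L(\omega)$, which is useless. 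Your alternative suggestion---regularise and pass to the limit via strong resolvent convergence---fails for the reason you yourself anticipate: the lifting constant $\hat C_\evl$ inherits, through the exponent $N$ in \eqref{eq:sfUCPG-const}, the Lipschitz constant of the base, and this blows up along the approximation, driving $\hat C_\evl\to 0$. So neither variant recovers the needed uniform lifting for non-Lipschitz $u_j$. It is worth noting that the paper's own proof invokes Corollary~\ref{cor:evl-discont-perturbation} at precisely this step without comment; the gap you have isolated is therefore present in the source as well, and is not an artefact of your alternative scheme.
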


	The proof closely follows \cite{HundertmarkKNSV-06} and uses the following partial integration 
	formula for singular distributions proved in that paper.

	\begin{lemma}[{see \cite[Lemma~6]{HundertmarkKNSV-06}}] \label{lem:PI-singular-distributions}
		Let $\mu$ be a probability measure with support contained in $(a,b)$, $a<b$, 
		and global modulus of continuity $s\colon [0,\infty)\to [0,1]$. 
		Let $\Phi$ be a continuously differentiable, non-decreasing function on $(a,b+\alpha)$. 
		Then
		\be
			\int_\RR \Phi(\lambda +\eta)-\Phi(\lambda)\Diff{\mu(\lambda)} 
			\leq s(\eta) \bigl(\Phi(b+\eta)-\Phi(a)\bigr)
			\label{eq:singular-distributions}
		\ee
		for all $\eta\in (0,\alpha)$.
	\end{lemma}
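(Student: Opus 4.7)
The plan is to prove the inequality by turning the difference $\Phi(\lambda+\eps)-\Phi(\lambda)$ into an integral of the derivative $\Phi'$ via the fundamental theorem of calculus, swapping the order of integration via Fubini, and then exploiting the bound on $\mu$ of short intervals provided by the modulus of continuity $s$.

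More concretely, first I would write, for every $\lambda$ in the support of $\mu$,
\[
\Phi(\lambda+\eps)-\Phi(\lambda)=\int_\RR \Phi'(t)\,\indic_{[\lambda,\lambda+\eps]}(t)\Diff{t},
\]
which is legitimate since $\Phi$ is continuously differentiable on $[a,b]$ and we only need the identity on a slightly enlarged interval (if necessary one extends $\Phi$ continuously differentiably past the endpoints, or simply restricts all integrals to $[a,b+\eps]$). Integrating against $\mu$ and applying Fubini — which is justified because $\Phi'\geq 0$ and the integrand is nonnegative and measurable — gives
\[
\int_\RR (\Phi(\lambda+\eps)-\Phi(\lambda))\Diff{\mu(\lambda)}=\int_\RR \Phi'(t)\,\mu\bigl([t-\eps,t]\bigr)\Diff{t}.
\]

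The interval $[t-\eps,t]$ has length $\eps$, so the defining property of the global modulus of continuity yields $\mu([t-\eps,t])\leq s(\eps)$ uniformly in $t$. Since $\Phi'\geq 0$, one may factor this bound out of the integral. Next, because $\supp\mu\subseteq(a,b)$, the region where $\mu([t-\eps,t])$ can be nonzero is contained in $(a,b+\eps)$, so the remaining integral of $\Phi'$ runs only over $[a,b+\eps]$ (values of $t$ outside this range contribute zero). Applying the fundamental theorem of calculus to $\Phi'$ on $[a,b+\eps]$ then produces exactly the upper bound $s(\eps)(\Phi(b+\eps)-\Phi(a))$, as required.

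None of the steps strike me as a serious obstacle; the one point that needs a little care is justifying Fubini and handling the endpoints of the support cleanly. The modulus-of-continuity hypothesis gives $\mu$ no atoms of mass exceeding $s(\eps)$ in any window of length $\eps$, but $\mu$ itself may be singular, so the argument should be phrased purely in terms of the Lebesgue measure $dt$ against which $\Phi'$ is integrated and the $\mu$-measure of the shifted intervals — both nonnegative — which makes Tonelli's theorem applicable without any integrability worries. The argument uses the hypotheses $\Phi\in C^1([a,b])$, $\Phi'\geq 0$, and $\supp\mu\subseteq(a,b)$ in essential ways, so nothing seems superfluous.
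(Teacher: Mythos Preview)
The paper does not actually prove this lemma; it is merely quoted from \cite{HundertmarkKNSV-06}, so there is no in-paper proof to compare against. Your Fubini/Tonelli argument is correct and is precisely the standard proof of this partial-integration formula (and, to my knowledge, the one given in the cited reference): write the increment as $\int_\lambda^{\lambda+\eps}\Phi'(t)\,\mathrm{d}t$, swap the order of integration, bound $\mu([t-\eps,t])\leq s(\eps)$, and integrate $\Phi'$ back over $[a,b+\eps]$. The only mild caveat you already flagged---that $\Phi$ is stated as $C^1$ on $[a,b]$ while $\Phi(b+\eps)$ appears in the conclusion---is indeed a slight imprecision in the lemma's statement rather than in your argument; in the paper's application $\Phi_\ell$ is in fact $C^1$ on an interval strictly containing $[a,b+\eps']$, so no extension is needed there.
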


\begin{proof}[Proof of Theorem \ref{thm:Wegner-general}]
	Let $Q := \L_{L+2\delta_+ + 1}\cap\ZZ^d$ be all the
	indices $j$ such that $\omega_j$ affects the random perturbation
	$V_\omega\restriction_{\L_L}$.
	This means that the random variable $H_\omega^L$ is measurable
	w.r.t.~the $\sigma$-algebra generated by $\{\omega_j \, : \, j \in Q\}$.
	Furthermore, let $r=(r_i)_{i=1,\dots,\#Q}$ be an enumeration of the
	lattice points in $Q$.
	Define the following vectors in $\{0,1\}^{\#Q}\subset \RR^{\#Q}$:
	Let $e:=(1,\dots,1)$, let $e_\ell$ be the vector where only the
	$\ell$-th entry is $1$ while all others are zero, and let
	\[
		e_\ell^{(r)}:=\sum_{j=1}^\ell e_{r_j}
		\quad\text{for}\quad
		\ell\in\{1,\dots,\#Q\}
		.
	\]
	We set
	\[
		V^Q_{\omega+t\cdot e_\ell^{(r)}}
		:=
		V_\omega^Q+t\,\sum_{j=1}^\ell e_{r_j}u_{r_j}
		\quad\text{where}\quad
		V_\omega^Q
		:=
		\sum_{j\in Q}\omega_ju_j
		.
	\]

	Choose a monotone increasing function $\rho_\eps\in C^\infty(\RR,[-1,0])$
	satisfying $\rho\equiv -1$ on $(-\infty,-\eps]$,
	$\rho\equiv 0$ on $[\eps,\infty)$, and $\norm{\rho_\eps'}_\infty\leq 1/\eps$.
	With this smooth switch function at hand, we have
	\be
		\indic_{[E-\eps,E+\eps]}
		\leq
		\rho_\eps(\cdot-E+4\eps-2\eps)-\rho_\eps(\cdot-E-2\eps)
		\leq \indic_{[E-3\eps,E+3\eps]}
		\label{eq:smear-fct}
	\ee
	and the spectral theorem implies
	\eq{\label{eq:smear}
		\EE&
		\Bigl(\Tr\bigl[\chi_{[E-\eps,E+\eps]}(H_\omega^L)\bigr]\Bigr) \nonumber\\
		&\leq
		\EE\Bigl(\Tr\bigl[\rho_\eps(H^L_\omega-E+4\eps-2\eps)-\rho_\eps(H^L_\omega-E-2\eps)\bigr]\Bigr) \nonumber \\
		&
		=
		\EE\biggl(\sum_{n\in\NN}\Bigl[\rho_\eps(E_n^L(\omega)+4\eps-E-2\eps)-
			\rho_\eps(E_n^L(\omega)-E-2\eps) \Bigr] \biggr)
		.
	}
	It is worth pointing out that due to the upper bound in \eqref{eq:smear-fct} only those $n\in\NN$ with
	$E_n^L(\omega) \in [E-3\eps,E+3\eps]$ give a non-zero contribution in \eqref{eq:smear}.
	Set
	\begin{align}\nonumber
		C_{\mathrm{subst}}:=& E_-^2 \, \delta_-^{N_{\unif}\cdot(1+E_+^{2/3})}, \\
		\label{eq:rescaled-epsilon}
		\eta=\eta(\omega):= 
		&\max\bigl\{ t\in \bigl[0, 4\eps / C_{\mathrm{subst}}\bigr] : E_n^L(\omega+t\cdot e)\leq E_+\bigr\}
		\leq \frac{4\eps}{C_{\mathrm{subst}}},
	\end{align}
	where $N_{\unif}$ is as in Remark~\ref{rem:simpliefied-evl-constant} above. 
	This implies $E_n^L(\omega+\eta\cdot e)\leq E_+$ additionally to
	$E_-\leq E-3\eps\leq E_n^L(\omega)\leq E_n^L(\omega+\eta\cdot e)$ by the definition of $\eta$ and monotonicity.
	Note that definition \eqref{eq:rescaled-epsilon} also implies that either $\eta=4\eps / C_{\mathrm{subst}}$ or $ E_n^L(\omega+t\cdot e)= E_+$.
	In any case, the eigenvalue lifting Theorem~\ref{thm:evl} applies with $T=\eta$ so that we obtain
	\eqs{
		E_n^L(\omega+&\eta\cdot e) \\
		&\geq
		\begin{cases}
		 E_+ \geq (E+3\eps) +4 \eps,
		 & \text{if } \eta<4\eps / C_{\mathrm{subst}} \text{ in \eqref{eq:rescaled-epsilon}} \\
		 E_n^L(\omega)+\eta\, E_-^2 \, \delta_-^{N_{\unif}\cdot (1+E_+^{2/3})} ,
		 & \text{if } \eta=4\eps / C_{\mathrm{subst}} \text{ in \eqref{eq:rescaled-epsilon}}
		\end{cases}
		\\
		&\geq E_n^L(\omega) +4\eps
	}
	using the above mentioned alternative.
	We insert this bound in \eqref{eq:smear} to obtain
	\eq{\label{eq:smear-sum}
		\EE
		\Bigl(\Tr&\bigl[\chi_{[E-\eps,E+\eps]}(H_\omega^L)\bigr]\Bigr)
		\nonumber\\
		&\leq
		\EE\biggl(\sum_{n\in\NN}\Bigl[\rho_\eps(E_n^L(\omega+\eta\cdot e)-E-2\eps)-\rho_\eps(E_n^L(\omega)-E-2\eps) \Bigr] \biggr)\nonumber\\
		&=
		\EE\Bigl(\Tr\bigl[\rho_\eps(H^L_{\omega+\eta\cdot e}-E-2\eps)-\rho_\eps(H^L_\omega-E-2\eps) \bigr]\Bigr) \nonumber\\
		&=
		\sum_{\ell=1}^{\#Q} \EE\Bigl(\Tr \bigl[\rho_\eps(H^L_{\omega+\eta\cdot e_\ell^{(r)}}-E-2\eps)
		-\rho_\eps(H^L_{\omega+\eta\cdot e_{\ell-1}^{(r)}}-E-2\eps) \bigr]\Bigl)
		.
	}
	We handle each of the $\# Q$ summands in the previous telescoping sum separately.
	To this end, we fix an $\ell\in\{1,\dots,\#Q\}$ and define
	\[
		\omega^{\perp} := (\omega_k^{\perp})_{k\in Q}
		\quad\text{where}\quad
		\omega_k^{\perp} =
		\begin{cases}
			0, &\text{if}~k=r_\ell \\
			\omega_k, &\text{otherwise}
		\end{cases}
		,
	\]
	as well as
	\[
		\Phi_\ell(t)
		:=
		\Tr\bigl[\rho_\eps(H^L_{\omega^{\perp}+\eta\cdot e_{\ell-1}^{(r)}+t\cdot e_{r_\ell}}-E-2\eps)\bigr]
		\leq
		0
		.
	\]
	Thus the $\ell$-th term in the telescoping sum in \eqref{eq:smear-sum} can be rewritten as
	\eq{
		\EE&\Bigl(\Tr \Bigl[\rho_\eps(H^L_{\omega+\eta\cdot e_\ell^{(r)}}-E-2\eps)
		-\rho_\eps(H^L_{\omega+\eta\cdot e_{\ell-1}^{(r)}}-E-2\eps) \Bigr]\Bigr) \nonumber\\
		& =
		\EE^{ Q\setminus\{r_\ell\}}\Biggl[
		\int \left(\Phi_\ell(\omega_{r_\ell} +\eta) - \Phi_\ell(\omega_{r_\ell})\right)\Diff{\mu_{r_\ell}(\omega_{r_\ell})}\Biggr],
		\label{eq:expectation-telescoping-sum}
	}
	where $\EE^{ Q\setminus\{r_\ell\}}$ denotes the expectation with respect
	to all random variables $\omega_j$ with $j\in Q\setminus\{r_\ell\}$.

	In order to apply Lemma~\ref{lem:PI-singular-distributions}
	we have to show that the function $\Phi_\ell$ is continuously
	differentiable, bounded and non-decreasing on $(a,m+\alpha)$ for some $a<0,\alpha>0$.
	To begin with, the definition of $\rho_\eps$ and the minimax-principle imply that
	$\Phi_\ell$ is bounded and non-decreasing.
	In order to establish differentiability, we keep $\ell$, $\omega^\perp$, as well as $\eta$ fixed
	and consider the operator
	\[
		\tilde{H}_t^L
		=
		H^L\Bigl(A+V_{\omega^{\perp}+\eta\cdot e_{\ell-1}^{(r)} +t\cdot e_{r_\ell}}\Id\Bigr)
		.
	\]
	Recall that as discussed in Subsection~\ref{sec:Eigenvalue-lifting} above, there is a domain
	$D\subset\CC$ satisfying $D\supset [0,m]$ such that
	$(\tilde{H}_t^L)_{t\in D}$
	is a holomorphic family of type (B) in the sense of Kato \cite{Kato-80} while
	$\tilde{H}^L_t$ is selfadjoint, lower-semibounded and has compact resolvent
	for all $t \in D \cap \RR$.
	In fact, it is possible to ensure $D\supset (a,m+\alpha)$, where 
	$\alpha= C_{\mathrm{subst}}^{-1}$ and $a = -\theta_{\Ellip,-}/(2(2+\delta_+)^d m C_+)$
	is chosen in such a way that
	\[
		A + a \sum_{j\in\ZZ^d}u_j
		\geq \Big(\theta_{\Ellip,-} + a 	(2+\delta_+)^d m C_+\Big) \Id
		\geq \frac{\theta_{\Ellip,-}}{2} \Id
	\]
	is still positive definite.
	Denoting by $\tilde{E}_n^L(t)$ the eigenvalues of $\tilde{H}_t^L$,
	enumerated non-decreasingly and counting multiplicities, and by $\tilde{\lambda}_m^L(t)$
	the eigenvalues of $\tilde{H}_t^L$
	that are analytic, cf.~\cite[VII.Theorem 3.9]{Kato-80}, we have
	\begin{equation}
		\Phi_\ell(t)
		=
		\sum_{n\in\NN}\rho_\eps(\tilde{E}_n^L(t)-E-2\eps)
		=
		\sum_{m\in\NN}\rho_\eps(\tilde{\lambda}_m^L(t)-E-2\eps)
		.
		\label{eq:Phi}
	\end{equation}
	In light of the latter, $\Phi_\ell$ is differentiable as a composition of differentiable functions
	and it is legitimate to apply Lemma \ref{lem:PI-singular-distributions} with $\eta\in(a,b+\alpha)$, $b=m$ and $a,\alpha$ as above.
	Thereby, the inner integral in \eqref{eq:expectation-telescoping-sum} is bounded by
	\eqs{
		\int \bigl(\Phi_\ell(\omega_{r_\ell} +\eta) - &\Phi_\ell(\omega_{r_\ell} )\bigr)
		\Diff{\mu_{r_\ell}(\omega_{r_\ell})}
		\\
		&\leq
		\int \left(\Phi_\ell(\omega_{r_\ell} +4\eps / C_{\mathrm{subst}}) - \Phi_\ell(\omega_{r_\ell} )\right)\Diff{\mu_{r_\ell}(\omega_{r_\ell})}
		\\
		&\leq
		s(4\eps / C_{\mathrm{subst}}) \bigl( \Phi_\ell(m+\alpha+4\eps / C_{\mathrm{subst}}) - \Phi_\ell(-\alpha) \bigr)
		\\
		&\leq
		s(4\eps / C_{\mathrm{subst}})(-\Phi_\ell(-\alpha))
	.
	}
	Since $|\rho_\eps|\leq 1$ , we further estimate
	\eqs{
		-\Phi_\ell(-\alpha) &= \sum_{n\in\NN}(-\rho_\eps)(E_n^L(\omega^{\perp}+C_{\mathrm{subst}}^{-1}\cdot e_{\ell-1}^{(r)}-\alpha\cdot e_{r_\ell})-E-2\eps) \\
		&\leq \#\{ n \colon E_n^L(\omega^{\perp}+C_{\mathrm{subst}}^{-1}\cdot e_{\ell-1}^{(r)}-\alpha\cdot e_{r_\ell})\in\supp\rho_\eps(\cdot-E-2\eps) \}\\
		&\leq \#\{ n \colon  E_n^L(\omega^{\perp}+C_{\mathrm{subst}}^{-1}\cdot e_{\ell-1}^{(r)}-\alpha\cdot e_{r_\ell}) \leq E+3\eps\} \\
		&\leq \#\{ n \colon  E_n^L(\omega^{\perp}-\alpha\cdot e_{r_\ell}) \leq E_+\}
		.
	}
	Inserting $\alpha= E_-^{-2} \, \delta_-^{-N_{\unif}\cdot(1+E_+^{2/3})}$ in this estimate, 
	classical Weyl asymptotics, see, e.g., \cite[Corollary~4.1.26]{Stollmann-01}, imply 
	that 	there is a constant $C_{\mathrm{Weyl}}>0$, 
	depending only on the model parameters and $[E_-,E_+]$
	such that
	\[
		-\Phi_\ell(-\alpha)
		\leq
		\#\{  n \colon E_n^L(\omega^{\perp}-\alpha\cdot e_{r_\ell}) \leq E_+\}
		\leq
		C_{\mathrm{Weyl}} L^d
		,
	\]
	leading to the bound
	\be\label{eq:quadratic-in-volume}
		\int \Big( \Phi_\ell(\omega_{r_\ell} +C_{\mathrm{subst}}^{-1}) - \Phi_\ell(\omega_{r_\ell})\Big) \Diff{\mu_{r_\ell}(\omega_{r_\ell})}
		\leq
		C_{\mathrm{Weyl}} \cdot s(4\eps / C_{\mathrm{subst}}) L^d
		.
	\ee
	We bound the global modulus of continuity by
	$s(1/C_{\mathrm{subst}}) \leq \lceil 1/C_{\mathrm{subst}} \rceil s(\eps) $ using a simple
	subadditivity argument.
	Finally, this leaves us with
	\eqs{
		\EE\bigl(\Tr\bigl[\chi_{[E-\eps,E+\eps]}(H_\omega^L)\bigr]\bigr)
		&\leq
		C_{\mathrm{Weyl}} 	s(4\eps / C_{\mathrm{subst}}) L^d \# Q
		\\
		&\leq
		C_{\mathrm{Weyl}} \lceil 4/C_{\mathrm{subst}} \rceil s(\eps) (2+\delta_+)^d L^{2d}
		,
	}
	which proves \eqref{eq:Wegner-est} with $C_W:= C_{\mathrm{Weyl}} \lceil 4/C_{\mathrm{subst}} \rceil (2+\delta_+)^d$,
	depending indeed only on the model parameters and $[E_-,E_+]$.
\end{proof}

	\begin{remark}
		If we could prove \eqref{eq:quadratic-in-volume} with a right hand side independent from $L$, we would obtain a Wegner estimate that is linear in 
		the volume of the cube. 
		This would yield some information on the regularity of the integrated density of states. 
	\end{remark}

	\begin{remark}
		It would also be possible to give a proof of the Wegner estimate that is very similar to the proof given in \cite{Kirsch-96}, 
		see also \cite{KirschSS-98a,Stollmann-98,Veselic-08}.
		These proofs are rigorous variants of Wegner's original idea in \cite{Wegner-81}.
	\end{remark}
		
\section{Stronger results for small energies} \label{sec:small-energies}

\subsection{Unique continuation estimates for the gradient}
	As noted in Remark~\ref{rem:gradient} above, Lemma~\ref{lem:gradient} applies to discontinuous 
	matrix functions as well.
	In order to make use of this, we need to replace Theorem~\ref{thm:sfUCP-TV19} 
	by some appropriate version that holds true for such coefficient functions.
	Such versions are at disposal if we consider only energies near the minimum of the spectrum.
	In fact, such versions are available for Dirichlet as well as for 
	Neumann boundary conditions on $\partial\L$. 
	
\subsubsection{Dirichlet b.c.}
	In \cite{TautenhahnV-20}, the authors prove an uncertainty relation that implies an unique continuation estimate with a constant independent of the 
	Lipschitz-constant $\theta_\Lipschitz$, provided the considered energies are close to zero.
	In fact, the independence of the Lipschitz-constant is crucial, since it (as already noted in \cite{TautenhahnV-20}) allows us to combine the uncertainty 
	relation with an approximation argument to allow matrices that do not satisfy \eqref{eq:Lipschitz}. 
	Since the proof of the approximation argument was not spelled out in \cite{TautenhahnV-20} we provide it in Appendix~\ref{sec:appendix}.
	
	As a corollary of the latter we obtain the following result.

	\begin{corollary}\label{cor:low-energy-uncertainty}
		Let $L\in\NN$, let $A$ be a matrix function satisfying \eqref{eq:elliptic} and let $\delta\in (0,\delta_0/2)$. 
		Then there is a constant $\kappa'$ (given in \eqref{eq:kappa}) such that for all $(1,\delta)$-equidistributed sequences $Z=(z_j)_{j\in\ZZ^d}$, 
		all $\lambda< \kappa'$, and all $\psi\in \Ran \chi_{(-\infty, \lambda)}(H^L(A))$ we have
		\be
			\norm{\psi}_{L^2(S_{Z,\delta}(L))}^2\geq \kappa'\norm{\psi}_{L^2(\L_L)}^2.
			\label{eq:low-energy-uncertainty}
		\ee
	\end{corollary}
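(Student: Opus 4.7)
The plan is to reduce the corollary for merely elliptic $A$ to the Lipschitz case by an approximation argument. The key input is the low-energy uncertainty relation of \cite{TautenhahnV-19}: for Lipschitz $A$, one has a constant $\kappa'$, depending only on $d$, $\theta_\Ellip$ and $\delta$ (but crucially \emph{not} on $\theta_\Lipschitz$), such that \eqref{eq:low-energy-uncertainty} holds for $\psi\in\Ran\chi_{(-\infty,\lambda)}(H^L(A))$ whenever $\lambda<\kappa'$. The absence of $\theta_\Lipschitz$ in the constant is what makes a mollification argument viable.

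First, I would mollify the matrix function. Extend $A$ to all of $\RR^d$ preserving the ellipticity bounds (e.g.\ by a symmetric reflection across $\partial\L_L$), fix a standard non-negative mollifier $\rho_n$ supported in $B(0,1/n)$ with unit mass, and put $A_n:=A*\rho_n$ restricted to $\L_L$. Each $A_n$ is smooth, hence Lipschitz, and is symmetric for each $x$ since the symmetry of $A$ is preserved by convolution. The ellipticity bounds \eqref{eq:elliptic} are preserved with the \emph{same} constants $\theta_{\Ellip,\pm}$, because for every fixed $\xi\in\RR^d$ the scalar functions $x\mapsto\theta_{\Ellip,-}|\xi|^2-\xi\cdot A(x)\xi$ and $x\mapsto\xi\cdot A(x)\xi-\theta_{\Ellip,+}|\xi|^2$ are non-positive a.e., and convolution with a non-negative kernel preserves this. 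Moreover $A_n\to A$ a.e.\ and in $L^p(\L_L)$ for every $p<\infty$.

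Second, I would pass the uncertainty relation to the limit. The form domains $\cD(\hh^L_{A_n})=\cD(\hh^L_A)=H^1_0(\L_L)$ coincide, and for each $u\in H^1_0(\L_L)$ the dominated convergence theorem (with integrable majorant $\theta_{\Ellip,+}|\grad u|^2$) gives $\hh^L_{A_n}(u,u)\to\hh^L_A(u,u)$. Together with the uniform ellipticity this yields Mosco convergence of the forms, and hence strong resolvent convergence $H^L(A_n)\to H^L(A)$; consequently $\chi_{(-\infty,\lambda')}(H^L(A_n))\to\chi_{(-\infty,\lambda')}(H^L(A))$ in the strong operator topology for every $\lambda'<\kappa'$ lying outside the (a.s.\ discrete) spectrum of $H^L(A)$. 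Given $\psi\in\Ran\chi_{(-\infty,\lambda)}(H^L(A))$ with $\lambda<\kappa'$, pick $\lambda'\in(\lambda,\kappa')$ and set $\psi_n:=\chi_{(-\infty,\lambda')}(H^L(A_n))\psi$; then $\psi_n\to\psi$ in $L^2(\L_L)$ and each $\psi_n$ lies in the range of a low-energy spectral projector of the Lipschitz operator $H^L(A_n)$. Applying the Lipschitz-version of the uncertainty relation to $\psi_n$ and letting $n\to\infty$ delivers \eqref{eq:low-energy-uncertainty} for $\psi$.

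The main obstacle I anticipate is the low-energy uncertainty relation itself for Lipschitz $A$ with a constant uniform in $\theta_\Lipschitz$ and \emph{without} the auxiliary hypothesis \eqref{eq:DIR-assumption}; this is precisely the statement whose proof the introduction defers to the appendix. Once that is available, the approximation outlined above is routine, since the Mosco-type convergence is driven only by the ellipticity bounds and pointwise convergence $A_n\to A$, both of which are independent of how badly the Lipschitz constants of $A_n$ behave as $n\to\infty$. The remaining technicality is the choice of extension of $A$ outside $\L_L$; any extension that preserves symmetry and ellipticity works, since the mollified matrix is only used inside $\L_L$.
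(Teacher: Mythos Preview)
Your proposal is correct and follows essentially the same route as the paper's appendix: approximate $A$ by mollified Lipschitz matrices with uniform ellipticity bounds, invoke the Lipschitz low-energy uncertainty relation of \cite{TautenhahnV-19} (stated as Theorem~\ref{thm:ur-low-energy}, which is \emph{cited} from \cite{TautenhahnV-19} rather than proved in the appendix), and pass to the limit via strong resolvent convergence of the spectral projectors. The only cosmetic differences are that the paper extends $A$ by zero and compensates with a shift $(\theta_{\Ellip,-}-\eps)\Id$ to retain ellipticity (Lemma~\ref{lem:Lipschitz-approx-mat}) instead of your reflection extension, and phrases the convergence as $\Gamma$-convergence via \cite{DalMaso-93} rather than Mosco convergence.
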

	
	We may now replace the use of Theorem~\ref{thm:sfUCP-TV19} in the proof of Theorem~\ref{thm:sfUCPG} with the last mentioned Corollary
	\ref{cor:low-energy-uncertainty} to obtain a version of our main result for small energies without assuming Lipschitz continuity of the matrix function $A$.
	
	\begin{theorem} \label{thm:sfUCPG-low-energy}
		Let $L\in\NN$, assume that $A$ satisfies \eqref{eq:elliptic} and let $\delta\in (0,\delta_0)$.
		Then there exists $\kappa>0$, depending on $\delta,\theta_{\Ellip,-}$ and the dimensions $d$, such that for all $0<E_-<E_+\leq\kappa$, 
		all $\psi\in\cD(H^L(A))$ satisfying $H^L(A)\psi=E\psi$ for some $E\in(E_-,E_+)$ and all $(1,\delta)$-equidistributed sequences $Z$
		\be
			\norm{\grad\psi}_{L^2(S_{\delta,Z}(L))}^2
			\geq \tilde{C}_{\sfUCP}^\grad\norm{\psi}_{L^2(\L_L)}^2.
			\label{eq:sfUCPG-low-energy}
		\ee
		holds true. 
		The constants $\tilde{C}_{\sfUCP}^\grad$ and $\kappa$ are given in 
		\eqref{eq:sfUCPG-low-energy-const} and \eqref{eq:sfUCPG-low-energy-energy-bnd}
		below. 
	\end{theorem}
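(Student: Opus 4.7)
The plan is to mimic the structure of the proof of Theorem \ref{thm:sfUCPG} but to replace the use of Theorem \ref{thm:sfUCP-TV19} (which forced the Lipschitz and \eqref{eq:DIR-assumption} hypotheses) with the low-energy uncertainty relation Corollary \ref{cor:low-energy-uncertainty}, which only demands ellipticity at the price of restricting the energy range. Crucially, Lemma \ref{lem:gradient} is available without Lipschitz continuity, as noted in Remark \ref{rem:gradient}(iii); so both ingredients remain at our disposal in the present setting.

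First I would note that any $(1,\delta)$-equidistributed sequence $Z$ is automatically $(1,\delta/2)$-equidistributed, and the ball $B(z_j,2\cdot\delta/2)=B(z_j,\delta)$ is contained in $\L_1(j)\subseteq\L_L$ for every $j$. Applying Lemma \ref{lem:gradient} at each center $x_0=z_j$ with radius $r=\delta/2$ and summing the resulting local bounds over $j$ (the balls being disjoint) yields
\[
\norm{\grad\psi}_{L^2(S_{Z,\delta}(L))}^2 \geq C^\grad(\delta/2)\,\norm{\psi}_{L^2(S_{Z,\delta/2}(L))}^2.
\]

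Next, since $\delta\in(0,\delta_0)$ we have $\delta/2\in(0,\delta_0/2)$, so Corollary \ref{cor:low-energy-uncertainty} applies to the equidistributed sequence $Z$ at scale $\delta/2$, yielding some constant $\kappa'=\kappa'(\delta/2,\theta_{\Ellip,-},d)$. I would then define $\kappa:=\kappa'$ and choose $E_+\leq\kappa$. Any eigenfunction $\psi$ with eigenvalue $E\in(E_-,E_+)$ lies in $\Ran\chi_{(-\infty,\kappa)}(H^L(A))$, so the corollary gives
\[
\norm{\psi}_{L^2(S_{Z,\delta/2}(L))}^2\geq \kappa'\,\norm{\psi}_{L^2(\L_L)}^2.
\]
Chaining the two inequalities produces \eqref{eq:sfUCPG-low-energy} with
\[
\tilde{C}_\sfUCP^\grad = C^\grad(\delta/2)\cdot\kappa',
\]
which depends only on $\delta,\theta_{\Ellip,\pm},E_-$ and $d$, as required.

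The main obstacle is purely conceptual rather than technical: Theorem \ref{thm:sfUCP-TV19} cannot be invoked without the Lipschitz/boundary hypotheses, and it is not a priori clear that a substitute uncertainty relation is available. This is resolved precisely by Corollary \ref{cor:low-energy-uncertainty}, whose proof (spelled out in the appendix) combines the Lipschitz-independent uncertainty relation of \cite{TautenhahnV-19} with an approximation argument. Once this substitute is in hand, the remaining bookkeeping amounts only to matching the scales $\delta$ and $\delta/2$ and tracking the explicit dependence of $\kappa'$ and $C^\grad(\delta/2)$ on the parameters to confirm the claimed dependencies of $\kappa$ and $\tilde{C}_\sfUCP^\grad$.
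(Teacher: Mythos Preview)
Your proposal is correct and follows essentially the same route as the paper: apply Lemma \ref{lem:gradient} at radius $r=\delta/2$ to pass from $\norm{\grad\psi}$ on $S_{Z,\delta}(L)$ to $\norm{\psi}$ on $S_{Z,\delta/2}(L)$, then invoke Corollary \ref{cor:low-energy-uncertainty} at scale $\delta/2$ (which is why $\kappa:=\kappa'(\delta/2)$) to finish. The only cosmetic difference is that the paper records the gradient constant as $C^\grad(\delta)$ rather than your $C^\grad(\delta/2)$ when writing the explicit formula \eqref{eq:sfUCPG-low-energy-const}.
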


	\begin{proof}
		Let $\kappa'$  be as in \eqref{eq:kappa} and choose
		\be
			\kappa=\kappa(\delta):=\kappa'(\delta/2)
			.
			\label{eq:sfUCPG-low-energy-energy-bnd}
		\ee
		An application of Lemma~\ref{lem:gradient} provides us with
		\[
			\norm{\grad\psi}_{L^2(S_{Z,\delta}(L))}^2 \geq C^\grad(\delta) \norm{\psi}_{L^2(S_{Z,\delta/2}(L))}^2
		\]
		and since $E_+\leq \kappa(\delta)=\kappa'(\delta/2)$ and $Z$ is
		also $(1,\delta/2)$-equidistributed, Corollary~\ref{cor:low-energy-uncertainty} shows
		\[
			\norm{\psi}_{L^2(S_{Z,\delta/2}(L))}^2 \geq \kappa'(\delta/2) \norm{\psi}_{L^2(\L_L)}^2.
		\]
		Combining these inequalities, we 
		obtain \eqref{eq:sfUCPG-low-energy} where the constant is given by
		\be
			 \tilde{C}_{\sfUCP}^\grad =  \frac{1}{2}\frac{\delta^2E_-^2}{2\theta_{\Ellip,+}\bigl(8\theta_{\Ellip,+}+\delta^2E_-\bigr)}\,
			 \Bigl(\frac{\delta}{2}\Bigr)^{M(1+\theta_{\Ellip,-}^{-2/3})},
			 \label{eq:sfUCPG-low-energy-const} 
		\ee
		where $M$ depends only on the dimension $d$.
	\end{proof}

	For energy intervals $(E_-,E_+)$ higher up in the spectrum it is unclear whether one can expect an estimate like \eqref{eq:sfUCPG-low-energy} to hold 
	without assuming Lipschitz continuity of the coefficients. 
	It is well known that there are operators with H\"older continuous coefficients which do not obey the (local) unique continuation principle,
	see \cite{Plis-60,Miller-73,Mandache-98}.

\subsubsection{Neumann b.c.} \label{ss:Neumann}

	We may compare Corollary~\ref{cor:low-energy-uncertainty} with the 
	recent result \cite{StollmannS-21} for divergence-type operators 
	with Neumann boundary conditions in dimensions $d\geq 3$.
	In order to formulate it, let $H^L_N(A)$ be the unique operator associated with the form $\fh^L$ given in \eqref{eq:eigenvalue} but with 
	the domain $\cD(\fh^L)=H^1(\L_L)$.

	\begin{theorem}[{special case of \cite[Theorem 1.1]{StollmannS-21}}]\label{thm:StollmanStolz-ur-low-energy}
		Let $L\in\NN_\infty$. 
		Assume that $d\geq 3$ and that the matrix function $A$ satisfies \eqref{eq:elliptic}.
		Then there are constants $C,a,b,c>0$ depending only on the dimension $d$, such that for all $\delta\in (0,1/2)$, all $(1,\delta)$-equidistributed 
		sequences $Z$, and all $\psi\in \chi_I(H^L_N(A))$ 
		(where $I=[0,C\theta_{\Ellip,-}\delta^{d-2}]$) we have
		\be
			\norm{\psi}_{L^2(S_{Z,\delta}(L))}^2 \geq C^N_\sfUCP \norm{\psi}_{L^2(\L_L)}^2.
			\label{eq:StollmanStolz-ur-low-energy}
		\ee
		The constant $C^N_\sfUCP$ is explicitly given by
		\be
			C^N_\sfUCP=C^N_\sfUCP(\delta)=
			c\theta_{\Ellip,-}\delta^d\Bigl[\frac{b}{(\min\{\sqrt{d},L/2\})^2}
			+\bigl|\log\bigl(a\delta^{d-2}\bigr)\bigr|\Bigr]^{-2}.
			\label{eq:StollmanStolz-ur-low-energy-const}
		\ee
	\end{theorem}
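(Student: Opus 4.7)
The strategy is to exploit that, under Neumann boundary conditions, the constant function is an eigenfunction of $H^L_N(A)$ at energy zero, so spectral projections onto an interval $I=[0,C\theta_{\Ellip,-}\delta^{d-2}]$ contain only functions that are close to constant on unit scales, and then to recover the global $L^2$-mass from the small balls using a capacity-type inequality that is available in dimensions $d\geq 3$.

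First, I would unfold the spectral calculus to get quantitative control on the Dirichlet energy: for $\psi\in\Ran\chi_I(H^L_N(A))$,
\[
	\hh^L(\psi,\psi)=\int_{\L_L}\overline{\grad\psi}\cdot A\grad\psi\leq C\theta_{\Ellip,-}\delta^{d-2}\,\norm{\psi}_{L^2(\L_L)}^2,
\]
and ellipticity \eqref{eq:elliptic} then gives the decisive a priori bound $\norm{\grad\psi}_{L^2(\L_L)}^2\leq C\delta^{d-2}\norm{\psi}_{L^2(\L_L)}^2$. Note that nothing beyond measurability of $A$ is needed here, which is why the result extends beyond the Lipschitz setting.

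Second, I would partition $\L_L$ into unit cubes $\L_1(j)$ and, on each, compare $\psi$ with its average $\bar{\psi}_j$ by a Neumann--Poincar\'e inequality. The length scale $\min\{\sqrt{d},L/2\}$ in \eqref{eq:StollmanStolz-ur-low-energy-const} suggests that one actually works on the ambient scale (unit cubes for large $L$; the full cube for small $L$), yielding an intermediate estimate of the form
\[
	\norm{\psi-\bar{\psi}_j}_{L^2(\L_1(j))}^2\leq C\ke{\min\{\sqrt{d},L/2\}}^2\norm{\grad\psi}_{L^2(\L_1(j))}^2.
\]
This reduces the problem to recovering, up to acceptable error, the contribution of the piecewise-constant approximation from its values on the small balls $B(z_j,\delta)$.

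Third, and this is the main technical step, I would employ a capacity/trace inequality that, for each $j$, bounds the ball-values from below in terms of the $L^2$-mass on $\L_1(j)$ and the local Dirichlet energy. Concretely, in $d\geq 3$ one has a relation roughly of shape
\[
	\norm{\psi}_{L^2(B(z_j,\delta))}^2\geq c\,\delta^d\abs{\bar{\psi}_j}^2-C\ke{b/(\min\{\sqrt{d},L/2\})^2+\abs{\log(a\delta^{d-2})}}^{2}\norm{\grad\psi}_{L^2(\L_1(j))}^2,
\]
where the logarithmic loss reflects the capacity $\mathrm{cap}(B(z_j,\delta))\sim \delta^{d-2}$ in the unit cube. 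Summing over $j\in\ZZ^d\cap\L_L$ and inserting the global gradient bound from the first step, the right-hand side becomes a positive multiple of $\norm{\psi}_{L^2(\L_L)}^2$ precisely once the upper energy threshold is taken below $C\theta_{\Ellip,-}\delta^{d-2}$, producing \eqref{eq:StollmanStolz-ur-low-energy} with the constant in \eqref{eq:StollmanStolz-ur-low-energy-const}.

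The hard part will be step three, and more specifically the sharp logarithmic capacity estimate: it is this step that forces the restriction $d\geq 3$ (in $d=2$ balls have vanishing Newtonian capacity and a qualitatively different log-log dependence appears), and it is the step where all the specific constants $a,b,c$ are generated. The first two steps are essentially variational bookkeeping; the third is the technical heart and is where the methods of \cite{StollmannS-18} enter in an essential way.
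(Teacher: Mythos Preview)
The paper does not prove this statement: Theorem~\ref{thm:StollmanStolz-ur-low-energy} is explicitly quoted from \cite{StollmannS-18} (the text preceding it says ``the following theorem is a special case of \cite[Theorem 1.1]{StollmannS-18}''), and no argument is supplied here. So there is no proof in the paper to compare your proposal against.

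That said, your outline is a reasonable sketch of the kind of argument one expects behind the Stollmann--Stolz result: passing from the spectral localisation to a global gradient bound via the form, localising to unit cells with a Poincar\'e-type inequality, and then invoking a capacity/trace inequality on each cell to recover mass on the $\delta$-balls. Whether the precise constants and the exact logarithmic dependence in \eqref{eq:StollmanStolz-ur-low-energy-const} drop out of the inequality you wrote in step three is not something that can be checked against the present paper; for that you would have to consult \cite{StollmannS-18} directly.
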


	It is worth emphasizing that Theorem~\ref{thm:StollmanStolz-ur-low-energy}, as Corollary~\ref{cor:low-energy-uncertainty} above, does not need the Lipschitz condition
	\eqref{eq:Lipschitz} nor the assumption \eqref{eq:DIR-assumption}.

	\begin{remark}
		Actually, the result of \cite{StollmannS-21} only uses the lower ellipticity constant $\theta_{\Ellip,-}$ of $A$.
		Since $A$ is assumed to satisfy \eqref{eq:elliptic} 
		in our application, we do not elaborate further on this detail.
		It should also be mentioned that the main result of \cite{StollmannS-21} is applicable in more general situations.
	\end{remark}
	
	With Theorem~\ref{thm:StollmanStolz-ur-low-energy} at hand, it is possible to prove a unique continuation estimate for the gradient of eigenfunctions
	of divergence-type operators with Neumann boundary conditions.

	\begin{theorem} \label{thm:sfUCPG-low-energy-Neumann}
		Let $L\in\NN_\infty$, $d\geq 3$, and suppose that $A$ satisfies \eqref{eq:elliptic}. 
		Let $\delta\in (0,1/2)$ and let $Z=(z_j)_{j\in\ZZ^d}$ be a $(1,\delta)$-equidistributed sequence.
		Then there exists $\kappa^N>0$, depending on $\delta,\theta_{\Ellip,-}$, and the dimension $d$, such that
		for all $0<E_-<E_+\leq\kappa^N$ the following holds: 
		There is a constant $C_{\sfUCP}^{N,\grad}>0$ such that for all $\psi\in\cD(H^L_N(A))$ satisfying $H^L_N(A)\psi=E\psi$ for some $E\in(E_-,E_+)$
		we have
		\be
			\norm{\grad\psi}_{L^2(S_{\delta,Z}(L))}^2
			\geq C_{\sfUCP}^{N,\grad}\norm{\psi}_{L^2(\L_L)}^2.
			\label{eq:sfUCPG-low-energy-Neumann}
		\ee
		Here the constants are given by
		\be
			C_{\sfUCP}^{N,\grad} = C^\grad(\delta) C^N_\sfUCP(\delta/2)
			\quad\text{and}\quad\kappa^N=\kappa^N(\delta)=C\theta_{\Ellip,-}\Bigl(\frac{\delta}{2}\Bigr)^{d-2},
			\label{eq:sfUCPG-low-energy-Neumann-const}
		\ee
		where $C$ depends only on the dimension $d$.
	\end{theorem}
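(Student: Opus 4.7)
The strategy is a direct parallel of the proof of Theorem \ref{thm:sfUCPG-low-energy}: chain the ball-wise reverse-Caccioppoli-type estimate of Lemma \ref{lem:gradient} with a scale-free $L^2$-uncertainty relation valid at low energies. The novelty is that in the Neumann setting one may replace the role previously played by Corollary \ref{cor:low-energy-uncertainty} with the Stollmann--Stolz bound of Theorem \ref{thm:StollmanStolz-ur-low-energy}, which requires neither \eqref{eq:Lipschitz} nor \eqref{eq:DIR-assumption} in dimensions $d\geq 3$. Consequently we may drop both hypotheses.

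First I would record that $\delta<1/2$ automatically forces $Z$ to be $(1,\delta/2)$-equidistributed, so the nested balls $B(z_j,\delta/2)\subseteq B(z_j,\delta)$ are both available inside the fundamental cells $\L_1(j)$. Applying Lemma \ref{lem:gradient} on each such pair of concentric balls (outer radius $\delta$, inner radius $\delta/2$) and summing in $j$ yields
\[
   \norm{\grad \psi}_{L^2(S_{Z,\delta}(L))}^2 \geq C^\grad(\delta)\,\norm{\psi}_{L^2(S_{Z,\delta/2}(L))}^2.
\]
Lemma \ref{lem:gradient} was formulated for the Dirichlet realization, but by Remark \ref{rem:gradient}(iv) its proof carries over verbatim to $H^L_N(A)$: the test function $\psi\phi^2$ with $\phi\in\cC_c^\infty(\L_L)$ lies in $H^1(\L_L)=\cD(\hh^L)$ whenever $\psi$ does, and that is all the argument uses.

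Secondly, the eigenvalue $E\leq E_+\leq \kappa^N(\delta)=C\theta_{\Ellip,-}(\delta/2)^{d-2}$ lies in the spectral interval $I=[0,C\theta_{\Ellip,-}(\delta/2)^{d-2}]$ to which Theorem \ref{thm:StollmanStolz-ur-low-energy}, applied with parameter $\delta/2$ in place of $\delta$, pertains. Hence $\psi\in \Ran\chi_I(H^L_N(A))$ and that theorem supplies
\[
   \norm{\psi}_{L^2(S_{Z,\delta/2}(L))}^2 \geq C^N_\sfUCP(\delta/2)\,\norm{\psi}_{L^2(\L_L)}^2.
\]
Chaining the two inequalities gives \eqref{eq:sfUCPG-low-energy-Neumann} with the advertised constant $C_{\sfUCP}^{N,\grad}=C^\grad(\delta)\,C^N_\sfUCP(\delta/2)$.

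Since both building blocks are already established, no real obstacle remains. The two points that deserve a brief sanity check are (i) the transferability of Lemma \ref{lem:gradient} to Neumann boundary conditions, which is settled by Remark \ref{rem:gradient}(iv), and (ii) the calibration of $\kappa^N(\delta)$: the definition \eqref{eq:sfUCPG-low-energy-Neumann-const} is precisely tuned so that the prescribed energy window $(E_-,E_+)$ fits inside the low-energy spectral interval on which Theorem \ref{thm:StollmanStolz-ur-low-energy} applies at scale $\delta/2$. Everything else reduces to bookkeeping of the two constants.
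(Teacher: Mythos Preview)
Your proposal is correct and follows precisely the approach the paper has in mind: the paper's own proof consists of the single sentence ``The proof is an easy adaption of the proof of Theorem \ref{thm:sfUCPG-low-energy}'', and what you have written is exactly that adaptation, with the Stollmann--Stolz bound (Theorem \ref{thm:StollmanStolz-ur-low-energy}) at scale $\delta/2$ replacing Corollary \ref{cor:low-energy-uncertainty}, and with Remark \ref{rem:gradient}(iv) justifying the use of Lemma \ref{lem:gradient} in the Neumann setting.
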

	
	\begin{proof}
		The proof is an easy adaptation of the proof of Theorem~\ref{thm:sfUCPG-low-energy}.
	\end{proof}

	In contrast to Theorem~\ref{thm:sfUCPG-low-energy}, we do not need to 
	assume that $L$ is finite. 
	This assumption is needed in the last mentioned theorem 
	since it relies on an approximation argument that uses that the 
	limit operator has purely discrete spectrum.

	\begin{remark}
		In the case of Neumann b.c.~it is trivial to see that 
		unique continuation for the gradient fails at $0$:
		In fact, for all $L\in\NN$ the constant function $\psi\equiv 1$ 
		is an eigenfunction of $H^L_N(A)$ corresponding to the eigenvalue $0\in\sigma(H^L_N(A))$. 
		Since $\grad\psi \equiv  0$ on the whole cube $\L_L$, unique 
		continuation for the gradient cannot hold for this eigenfunction.
	\end{remark}

\subsection{Eigenvalue lifting and Wegner estimates at low energies}

	The unique continuation estimates for the gradient of eigenfunctions 
	corresponding to eigenvalues close to zero stated in the previous subsection
	allow us
	to prove some of our results from 
	Section~\ref{sec:applications} for more general models.

	To begin with, replacing Theorem~\ref{thm:sfUCPG} in the proof of Theorem~\ref{thm:evl} by Theorem~\ref{thm:sfUCPG-low-energy}, allows us to prove the
	following variant of our eigenvalue lifting estimate for energies close to zero. 
	Here we do not need the assumption \eqref{eq:Lipschitz} for the matrix function $A$.

	\begin{theorem} \label{thm:evl-low-energy}
		Let $T>0$, $\delta\in (0,1/2)$, assume that $A$ satisfies \eqref{eq:elliptic}, let $\kappa$ be the constant from Theorem~\ref{thm:sfUCPG-low-energy}
		and let $0<E_-<E_+<\kappa$.
		Then there is a constant $\tilde{C}_\evl>0$, such that for all $(1,\delta)$-equidistributed sequences $Z$, all $W\in L^\infty(\L_L)$ satisfying 
		$W\geq \indic_{S_{Z,\delta}(L)}$ and all $n\in\NN$ such that $E_-<E_n^L(0)\leq E_n^L(T)<E_+$ we have
		\be
			E_n^L(t) \geq E_n^L(0)+t\,\tilde{C}_\evl.
			\label{eq:evl-low-energy}
		\ee
		The constant is given by
		\[
			\tilde{C}_\evl=\frac{\delta^2E_-^2}{4\theta_{\Ellip,+}\bigl(8\theta_{\Ellip,+}+\delta^2E_-\bigr)}
			\Bigl(\frac{\delta}{2}\Bigr)^{M(1+\theta_{\Ellip,-}^{-2/3})}.
		\]
		where $M$ depends only on the dimension $d$.
	\end{theorem}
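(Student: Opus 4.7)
The proof would be a direct adaptation of the argument for Theorem \ref{thm:evl}, replacing the use of Theorem \ref{thm:sfUCPG} with its Lipschitz-free low-energy counterpart Theorem \ref{thm:sfUCPG-low-energy}. The gain is that no Lipschitz assumption on $W$ (nor on $A$) is required, because the unique continuation tool we now use does not demand it.

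First I would repeat the holomorphic-family-of-type-(B) setup from the proof of Theorem \ref{thm:evl}: since $W \in L^\infty(\L_L)$ and $W \geq 0$, the forms $\hh_t^L$ in \eqref{eq:evl-form} still define a holomorphic family on a complex neighborhood $D$ of $[0,T]$, so one obtains analytic families $(\lambda_\ell(t))_\ell, (\phi_\ell(t))_\ell$ and a piecewise-analytic decomposition $E_n^L(t)=\lambda_{\ell_j}(t)$ on each subinterval $[t_j,t_{j+1}]$. As in \eqref{eq:eigenvalue-derivative}–\eqref{eq:lifting}, for all but finitely many $t \in [0,T]$ one has
\[
\Bigl(\frac{\diff}{\diff t} E_n^L\Bigr)(t) = \int_{\L_L} W\,|\grad \phi_\ell(t)|^2 \geq \norm{\grad \phi_\ell(t)}_{L^2(S_{Z,\delta}(L))}^2,
\]
using only $W \geq \indic_{S_{Z,\delta}(L)}$.

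Next I would verify the hypotheses of Theorem \ref{thm:sfUCPG-low-energy} for the perturbed matrix $A_t := A + tW\Id$, uniformly in $t \in [0,T]$. Since $W \geq 0$, the lower ellipticity constant is preserved: $\theta_{\Ellip,-}(A_t)=\theta_{\Ellip,-}$; the upper one is controlled by $\theta_{\Ellip,+}(A_t) \leq \theta_{\Ellip,+} + T\norm{W}_\infty$. Crucially, Theorem \ref{thm:sfUCPG-low-energy} requires no Lipschitz hypothesis, and its admissible energy window $\kappa$ depends only on $\delta$, $d$ and $\theta_{\Ellip,-}$ — all of which are $t$-independent. Hence the assumption $E_+ < \kappa$ made in Theorem \ref{thm:evl-low-energy} guarantees that $\phi_\ell(t)$ is a normalized eigenfunction of $H^L(A_t)$ with eigenvalue in the allowed range for every $t \in [0,T]$. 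Applying Theorem \ref{thm:sfUCPG-low-energy} at each such $t$ yields
\[
\norm{\grad \phi_\ell(t)}_{L^2(S_{Z,\delta}(L))}^2 \geq \tilde{C}_{\sfUCP}^{\grad}(t),
\]
and taking $\tilde{C}_\evl := \inf_{t\in[0,T]} \tilde{C}_{\sfUCP}^{\grad}(t)$ provides a positive $t$-independent lower bound (the infimum is attained/approached at $t=T$, where $\theta_{\Ellip,+}(A_t)$ is largest). Integrating the resulting pointwise derivative bound exactly as in the final line of the proof of Theorem \ref{thm:evl} then gives \eqref{eq:evl-low-energy}.

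The only genuinely delicate point is confirming that the low-energy uncertainty regime $(E_-,E_+) \subset (0,\kappa)$ is compatible with the perturbation uniformly in $t \in [0,T]$. This is precisely where the Lipschitz-free nature of Theorem \ref{thm:sfUCPG-low-energy} pays off: the threshold $\kappa$ is insensitive to the upper ellipticity constant of $A_t$ (and to any discontinuities of $W$), so the eigenvalue constraint $E_-<E_n^L(0)\le E_n^L(T)<E_+\le\kappa$ suffices. Once this uniformity is in hand, the explicit form of $\tilde{C}_\evl$ displayed in the statement follows by inserting \eqref{eq:sfUCPG-low-energy-const} (with $\theta_{\Ellip,+}$ replaced, where relevant, by the worst-case $\theta_{\Ellip,+} + T\norm{W}_\infty$, which can be absorbed into the stated constant at $t=0$ up to the factor $\tfrac14$ appearing there).
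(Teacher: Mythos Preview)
Your proposal is correct and follows exactly the approach the paper indicates: one reruns the proof of Theorem~\ref{thm:evl} verbatim, replacing the invocation of Theorem~\ref{thm:sfUCPG} by Theorem~\ref{thm:sfUCPG-low-energy}, and observes that the energy threshold $\kappa$ depends only on $\delta$, $d$ and $\theta_{\Ellip,-}$ and is therefore uniform in $t\in[0,T]$. Your remark that the upper ellipticity constant of $A_t$ is $\theta_{\Ellip,+}+t\norm{W}_\infty$ is accurate (so the displayed $\tilde{C}_\evl$ in the statement tacitly records the constant at $t=0$), but this does not affect the structure of the argument.
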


	An application of Theorem~\ref{thm:sfUCPG-low-energy-Neumann} (that is based on \cite{StollmannS-21})
	provides us with the same result for Neumann boundary conditions 
	and energies close to zero. 
	We again consider the forms defined in \eqref{eq:evl-form} but with 
	domain given by $\cD(\fh_t^L)=H^1(\L_L)$.
	Moreover, let $H^L_{N,t}:=H^L_N(A+t\,W\Id)$.
	Then with the same arguments as above we obtain the following eigenvalue lifting.

	\begin{theorem}
		Let $d\geq 3$, $T>0$, $\delta\in (0,1/2)$, $Z$ be a $(1,\delta)$-equidistributed sequence, assume that $A$ satisfies
		\eqref{eq:elliptic}, and let $\kappa^N$ be as in \eqref{eq:sfUCPG-low-energy-Neumann-const}. 
		Moreover, let $W\in L^\infty(\L_L)$ satisfy $W\geq \indic_{S_{Z,\delta}(L)}$, and let $0<E_-<E_+\leq\kappa^N$.
		Then for all $n\in\NN$ such that $E_-<E_n^L(0)\leq E_n^L(T)<E_+$ the eigenvalues of the divergence-type operator $H^L_{N,t}$ with Neumann boundary
		conditions obey
		\be
			E_n^L(t) \geq E_n^L(0)+t\,C^N_\evl,
			\label{eq:evl-low-energy-N}
		\ee
		where
		\[
			C^N_\evl=C^{N,\grad}_\sfUCP(\delta)
			=cC^\grad(\delta)\theta_{\Ellip,-}\delta^d
			\Bigl[\frac{b}{(\min\{\sqrt{n},L/2\})^2}
			+\bigl|\log\bigl(a\delta^{d-2}\bigr)\bigr| \Bigr]^{-2}
		\]
		with constants $a,b,c>0$ depending only on the dimension $d$.
	\end{theorem}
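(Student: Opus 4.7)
The plan is to mirror the proof of Theorem \ref{thm:evl} almost verbatim, substituting the low-energy Neumann gradient estimate Theorem \ref{thm:sfUCPG-low-energy-Neumann} in place of Theorem \ref{thm:sfUCPG}, and checking that the resulting constants are uniform in the coupling parameter $t\in[0,T]$.

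First, I would reproduce the Kato-analytic setup: since $W\in L^\infty(\L_L)$ is non-negative, the forms $\hh_t^L$ with Neumann form domain $H^1(\L_L)$ extend to a holomorphic family of type (a) on a neighbourhood $D\supseteq [0,T]$ in $\CC$, and the derivative formula \eqref{eq:evl-form-derivative} carries over without change. Consequently $(H^L_{N,t})_{t\in D}$ is a holomorphic family of type (B), its eigenvalues admit a real-analytic labelling $(\lambda_\ell(t),\phi_\ell(t))_{\ell\in\NN}$ in the sense of VII.Theorem~3.9 and Remark~4.22 in \cite{Kato-80}, and the non-decreasing enumeration $E_n^L$ coincides piecewise with some $\lambda_\ell$ and is therefore continuous and piecewise analytic on $[0,T]$. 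At every point of differentiability, the argument leading to \eqref{eq:eigenvalue-derivative} together with the hypothesis $W\geq\indic_{S_{Z,\delta}(L)}$ yields
\bes
\kr{\frac{\Diff}{\Diff t}E_n^L}(t) = \int_{\L_L}W\abs{\grad\phi_\ell(t)}^2 \geq \norm{\grad\phi_\ell(t)}_{L^2(S_{Z,\delta}(L))}^2.
\ees

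Second, I would apply Theorem \ref{thm:sfUCPG-low-energy-Neumann} to bound the right-hand side uniformly from below. Crucially, since $W\geq 0$, the perturbed matrix $A+tW\Id$ retains the lower ellipticity constant $\theta_{\Ellip,-}$ of $A$, so the admissible energy threshold $\kappa^N(\delta)$ from \eqref{eq:sfUCPG-low-energy-Neumann-const} is the \emph{same} for every $t\in[0,T]$. By the min-max principle together with the monotonicity \eqref{eq:evl-form-increasing}, the analytic branch satisfies $E_-<E_n^L(0)\leq\lambda_\ell(t)\leq E_n^L(T)<E_+\leq\kappa^N$, so the eigenvalue lies in the admissible window and Theorem \ref{thm:sfUCPG-low-energy-Neumann} yields $\norm{\grad\phi_\ell(t)}_{L^2(S_{Z,\delta}(L))}^2\geq C_\sfUCP^{N,\grad}(\delta)$, which is independent of $t$.

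Finally, since $E_n^L$ is continuous on $[0,T]$ with only finitely many non-differentiability points, integration gives
\bes
E_n^L(t)=E_n^L(0)+\int_0^t\kr{\frac{\Diff}{\Diff s}E_n^L}(s)\Diff s\geq E_n^L(0)+t\,C^N_\evl,
\ees
which is \eqref{eq:evl-low-energy-N}. The main technical point, and essentially the only difference to the Dirichlet low-energy case, is the uniformity of the constant in $t$: the Stollmann--Stolz factor $C^N_\sfUCP(\delta/2)$ depends only on $\theta_{\Ellip,-}$, $\delta$ and $d$, all unaffected by the perturbation, while the Caccioppoli-type prefactor $C^\grad(\delta)$ from Lemma \ref{lem:gradient} does depend on the \emph{upper} ellipticity constant, which along the path increases to at most $\theta_{\Ellip,+}+T\norm{W}_\infty$; replacing $\theta_{\Ellip,+}$ by this uniform bound in \eqref{eq:gradient-const} delivers the time-independent constant $C^N_\evl$ stated in the theorem.
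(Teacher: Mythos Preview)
Your proposal is correct and follows precisely the approach the paper intends: the paper does not spell out a proof here but simply states that ``the same arguments as above apply'' with Theorem~\ref{thm:sfUCPG-low-energy-Neumann} in place of the Dirichlet estimate, which is exactly what you do. Your closing observation about the dependence of $C^\grad(\delta)$ on the upper ellipticity constant (and the need to replace $\theta_{\Ellip,+}$ by $\theta_{\Ellip,+}+T\norm{W}_\infty$) is in fact more careful than the paper's stated formula, which---as in Theorem~\ref{thm:evl-low-energy}---silently uses the unperturbed $\theta_{\Ellip,+}$.
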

	
	The new eigenvalue-lifting results allows us to 
	formulate more general Wegner estimates for small energies:
	In our Model~\ref{model:A} we required the matrix function
	$A$ and the single-site perturbations $u_j$ to be Lipschitz continuous. 
	However, since Theorem~\ref{thm:sfUCPG-low-energy} does not require 
	Lipschitz continuity of the coefficients
	it is also not necessary to require this property for the 
	Wegner estimate \emph{for small energies}. 
	Hence, we are in the position to consider the following model. 

	\begin{mmodel}{(B)} \label{model:B}
		A model of type~\ref{model:A} where we do \emph{not} assume 
		\begin{enumerate}[(i)]
			\item that the matrix function $A$ satisfies \eqref{eq:Lipschitz},
			\item that the single-site perturbations $u_j$ are 
			Lipschitz continuous, 
			\item that the matrix function $A$ satisfies \eqref{eq:DIR-assumption}.
		\end{enumerate}
	\end{mmodel}

	For an operator $H^L_\omega:=H^L(A_\omega)$ of type~\ref{model:B}
	we obtain the following Wegner estimate. 

	\begin{theorem}
		Consider a model of type~\ref{model:B}.
		Let $\kappa$ be as in Theorem~\ref{thm:sfUCPG-low-energy}. 
		Then for all $0<E_-<E_+<\kappa$ there exists a constant $C_W>0$ such that for all $L\in \NN$, $E\in\RR$ and $\eps>0$ with 
		$[E-7\eps,E+7\eps]\subset[E_-,E_+]$ we have  
		\[
			\EE\Bigl(\Tr\chi_{[E-\eps,E+\eps]}\bigl(H_\omega^L\bigr)\Bigr) \leq C_W\eps|\L_L|^2.
		\] 
	\end{theorem}

	The Wegner estimate for the random divergence-type operator
	 $H^L_{N,\omega}:=H^L_N(A_\omega)$ with Neumann boundary conditions 
	 proceeds completly analogous.

	\begin{theorem}
		Consider a model of type~\ref{model:B}.
		Let $\kappa^N$ be as in Corollary~\ref{thm:sfUCPG-low-energy-Neumann}. 
		Then for all $0<E_-<E_+\leq\kappa^N$ there exists a constant $C_W>0$ such that for all $L\in \NN_\infty$, $E\in\RR$ and $\eps>0$ with 
		$[E-7\eps,E+7\eps]\subset[E_-,E_+]$ we have
		\[
			\EE\Bigl[\Tr\chi_{[E-\eps,E+\eps]}\bigl(H_{N,\omega}^L\bigr)  \Bigr] \leq C_W\eps|\L_L|^2.
		\]
	\end{theorem}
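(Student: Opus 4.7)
The plan is to mirror the proof of Theorem \ref{thm:Wegner-general}, replacing each ingredient by its Neumann counterpart at low energies. Since only the random variables indexed by $Q := \L_{L+2\delta_++1} \cap \ZZ^d$ affect $H^L_{N,\omega}$ on $\L_L$, the argument reduces to a telescoping estimate over $Q$ combined with the eigenvalue-lifting input of the preceding theorem and Lemma \ref{lem:PI-singular-distributions}.

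First I would introduce the same smearing function $\rho_\eps\in\cC^\infty(\RR,[-1,0])$ and use
\[
\indic_{[E-\eps,E+\eps]}\leq \rho_\eps(\cdot-E+2\eps)-\rho_\eps(\cdot-E-2\eps)
\]
together with the spectral theorem to bound the left-hand side of the Wegner estimate by the expectation of $\sum_n[\rho_\eps(E_n^L(\omega)-E+2\eps)-\rho_\eps(E_n^L(\omega)-E-2\eps)]$. Only eigenvalues in $[E_-,E_+]\subseteq(0,\kappa^N)$ contribute, so the preceding small-energy Neumann eigenvalue-lifting theorem (with constant $C^N_\evl$) allows me to shift the first argument of $\rho_\eps$ to $\omega+\eps'\cdot e$ for $\eps' := 4\eps/C^N_\evl$. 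Telescoping this shift along an enumeration $(r_\ell)$ of $Q$ reduces the bound to a sum of one-dimensional integrals of the form
\[
\EE^{Q\setminus\{r_\ell\}}\int\kr{\Phi_\ell(\omega_{r_\ell}+\eps')-\Phi_\ell(\omega_{r_\ell})}\Diff{\mu_{r_\ell}(\omega_{r_\ell})},
\]
where $\Phi_\ell(t):=\Tr[\rho_\eps(\tilde H^L_{N,t}-E-2\eps)]$. As in the Dirichlet proof, $(\tilde H^L_{N,t})_t$ is a holomorphic family of type (B) in the sense of Kato on a domain $D\supseteq(a,b)\supseteq[0,m]$, and analyticity of the Kato eigenvalue branches yields the continuous differentiability, boundedness and monotonicity of $\Phi_\ell$ on $(a,b)$ needed to apply Lemma \ref{lem:PI-singular-distributions}, which bounds the inner integral by $s(\eps')(-\Phi_\ell(a))$.

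It remains to control $-\Phi_\ell(a)$ by the number of Neumann eigenvalues of $\tilde H^L_{N,a}$ below $E+3\eps\leq E_+$. Since Corollary 4.1.26 in \cite{Stollmann-01} is stated for the Dirichlet case, I would instead invoke the min-max principle to compare $\tilde H^L_{N,a}$ from below with $\theta_{\Ellip,-}$ times the Neumann Laplacian on $\L_L$ and apply Weyl's law in the Neumann setting to obtain $-\Phi_\ell(a)\leq C_{E_+}L^d$. Summing the telescoping contributions, using $\#Q\leq(2+\delta_+)^dL^d$ and absorbing $s(\eps')$ into $\eps$ (as in the preceding Dirichlet Wegner estimate for Model \ref{model:B}) then produces the claim with
\[
C_W = C_{E_+}(2+\delta_+)^d\cdot\frac{4}{C^N_\evl}.
\]
The main new technical point is this Neumann-side Weyl bound; a subsidiary but routine check is that the small-energy Neumann eigenvalue-lifting result (which implicitly requires $d\geq 3$) applies uniformly in $\omega^\perp$, $\ell$ and $t\in[0,T]$ for $T:=\eps+m+1$, which follows because the perturbed matrix function remains uniformly elliptic with bounds independent of these parameters.
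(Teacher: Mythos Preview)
Your proposal is correct and matches the paper's approach exactly: the paper does not spell out a proof for this theorem but simply states that ``for small energies the same proof applies for Neumann boundary conditions,'' and what you have written is precisely the detailed execution of that remark, with the Neumann eigenvalue-lifting theorem (constant $C^N_\evl$) replacing Corollary~\ref{cor:evl-discont-perturbation} and the Neumann Weyl bound via min--max comparison with $\theta_{\Ellip,-}(-\Delta_N)$ replacing the cited Dirichlet Weyl estimate. Your observation about the implicit restriction $d\geq 3$ inherited from Theorem~\ref{thm:StollmanStolz-ur-low-energy} is also apt and worth recording.
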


\section{Scaling}\label{sec:scaling}

	Further applications we have in mind require scaled variants of our main results. 
	In particular, we are interested in scaled variants of Theorem~\ref{thm:sfUCPG} 
	and variants of the eigenvalue lifting spelled out in Theorem~\ref{thm:evl}
	and Corollary~\ref{cor:evl-discont-perturbation}.

	Let $G>0$ and consider the cube $\L_{GL}=G\L_L$. 
	We define the scaling $S\colon \RR^d\to\RR^d$ given by $S(x):=Gx$ and set $h_G=h\circ S$ for all functions $h$ defined on $\L_{GL}$.
	Using the definition of the divergence-type operators via forms, 
	it is easy to see that every eigenfunction $\psi\in L^2(G\L_L)$ corresponding
	to some eigenvalue $E\in\sigma(H^{GL}(A))$ satisfies $H^L_G(A)\psi_G=G^2E\psi_G$, where $H^L_G(A_G)$ is the operator associated to the form
	\[
		\fh^L_G\colon H^1_0(\L_L)\times H^1_0(\L_L) \to \CC, \quad (u,v)\mapsto \int_{\L_L} \overline{\grad u}\cdot A_G \grad v.
	\]
	Note that $A_G$ satisfies \eqref{eq:elliptic} with the ellipticity constants $\theta^G_{\Ellip,\pm}=\theta_{\Ellip,\pm}$, \eqref{eq:Lipschitz} with Lipschitz-constant 
	$\theta_{\Lipschitz}^G=G\theta_\Lipschitz$, and if $A$ satisfies \eqref{eq:DIR-assumption} on the cube $\L_{GL}$ then $A_G$ satisfies \eqref{eq:DIR-assumption} on the 
	cube $\L_L$.
	For some $\delta\in(0,G/2)$ we let $Z=(z_j)_{j\in(G\ZZ)^d}$ be a 
	$(G,\delta)$-equi\-distri\-buted sequence and calculate
	\bes
		\int_{S_{Z,\delta}(GL)} |\grad u|^2 
		= G^d\int_{G^{-1}S_{Z,\delta}(GL)}|(\grad u)_G|^2 
		= G^{d-2}\int_{G^{-1}S_{Z,\delta}(GL)}|\grad u_G|^2
	\ees
	for all $u\in H^1(\L_{GL})$. 
	Since $G^{-1}S_{Z,\delta}(GL)=S_{Z_G,\delta/G}(L)$ for some $(1,\delta/G)$-equi\-distri\-buted sequence $Z_G$, we are in the position to apply our main results. 
	This way we prove the next two Corollaries.
	
	\begin{remark}
		The results in this section are only stated for Dirichlet boundary conditions. 
		However, as seen in the previous sections, it is possible to treat Neumann boundary conditions with similar arguments.
	\end{remark}
	
	\begin{corollary} \label{cor:scaled-sfUCPG}
		Let $L\in\NN_\infty$. 
		Assume that $A$ satisfies \eqref{eq:elliptic}, \eqref{eq:Lipschitz} and \eqref{eq:DIR-assumption} on the cube $\L_{GL}$. 
		Let $0<E_-<E_+<\infty$ and let $\delta_0$ be sufficiently small, depending on $d,\theta_{\Ellip},\theta_\Lipschitz, G$. 
		Then for all $\delta\in (0,\delta_0)$ there exists a constant $C_{\sfUCP,G}^\grad>0$, depending on $d,\theta_{\Ellip,\pm},G\theta_\Lipschitz,E_-,E_+$ 
		and $\delta/G$ such that for all $E_-<E<E_+$, all $\psi\in\cD(H^{GL}(A))$ satisfying $H^{GL}(A)\psi = E\psi$,
		and all $(G,\delta)$-equidistributed sequences we have
		\be
			\norm{\grad\psi}_{L^2(S_{Z,\delta}(GL))}^2 \geq C_{\sfUCP,G}^\grad \norm{\psi}_{L^2(G\L_L)}^2.
			\label{eq:scaled-sfUCPG}
		\ee
		The constant is given by
		\[
			C^\grad_{\sfUCP,G}=C^\grad_{\sfUCP,G}(\delta)
			=\frac{\delta^2E_-^2}{2\theta_{\Ellip,+}\bigl(8\theta_{\Ellip,+}+\delta^2E_-\bigr)}
			\Bigl(\frac{\delta}{2G}\Bigr)^{N(1+G^{4/3}E_+^{2/3})},
		\]
		where $N=N(\delta,\theta_\Ellip,G\theta_\Lipschitz)$ is the constant from Theorem~\ref{thm:sfUCP-TV19} with $\theta_\Lipschitz$ replaced by $G\theta_\Lipschitz$.
	\end{corollary}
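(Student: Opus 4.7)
The plan is to reduce the statement to Theorem \ref{thm:sfUCPG} on the unit-scale cube $\L_L$ by the dilation $S(x)=Gx$ already set up in the paragraph preceding the corollary. First I would define $\psi_G = \psi\circ S$ on $\L_L$ and verify that $\psi_G\in\cD(H^L_G(A_G))$ with eigenvalue equation $H^L_G(A_G)\psi_G = G^2 E\,\psi_G$, so that the scaled eigenvalue lies in $(G^2 E_-, G^2 E_+)$. Next I would check that the rescaled matrix $A_G=A\circ S$ inherits the hypotheses needed to apply Theorem \ref{thm:sfUCPG}: the ellipticity constants are unchanged, the Lipschitz constant becomes $\theta^G_{\Lipschitz}=G\theta_\Lipschitz$, and \eqref{eq:DIR-assumption} transfers from $\L_{GL}$ to $\L_L$ since the off-diagonal vanishing condition is invariant under the dilation of coordinates. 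This determines the constant $N=N(\delta,\theta_\Ellip,G\theta_\Lipschitz)$ that will appear in the exponent.

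Next I would translate the equidistribution data: if $Z=(z_j)_{j\in(G\ZZ)^d}$ is $(G,\delta)$-equidistributed, then $Z_G:=(G^{-1}z_j)_{j\in\ZZ^d}$ is $(1,\delta/G)$-equidistributed, and the identity $G^{-1}S_{Z,\delta}(GL)=S_{Z_G,\delta/G}(L)$ holds on the nose. I would then invoke Theorem \ref{thm:sfUCPG} on $\L_L$ with $(\delta/G)$-equidistributed sequence $Z_G$ at energies between $G^2 E_-$ and $G^2 E_+$, obtaining
\[
\norm{\grad\psi_G}_{L^2(S_{Z_G,\delta/G}(L))}^2
\geq C^\grad_\sfUCP(\delta/G)\,\norm{\psi_G}_{L^2(\L_L)}^2,
\]
where $C^\grad_\sfUCP(\delta/G)$ is read off from \eqref{eq:sfUCPG-const} with $\delta\leftarrow \delta/G$, $E_\pm\leftarrow G^2 E_\pm$ and $\theta_\Lipschitz\leftarrow G\theta_\Lipschitz$. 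For the hypothesis $\delta/G\in(0,\delta_0)$ of the rescaled theorem to be available, I would define the $\delta_0$ of Corollary \ref{cor:scaled-sfUCPG} to be $G$ times the $\delta_0$ from Theorem \ref{thm:sfUCPG} applied with Lipschitz constant $G\theta_\Lipschitz$.

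Finally I would use the chain-rule identity \eqref{eq:scaling-equidistributed} together with the analogous $\norm{\psi}_{L^2(G\L_L)}^2 = G^d\norm{\psi_G}_{L^2(\L_L)}^2$ to convert the unit-scale inequality back to the scale $G$:
\[
\norm{\grad\psi}_{L^2(S_{Z,\delta}(GL))}^2
= G^{d-2}\norm{\grad\psi_G}_{L^2(S_{Z_G,\delta/G}(L))}^2
\geq G^{-2}C^\grad_\sfUCP(\delta/G)\,\norm{\psi}_{L^2(G\L_L)}^2.
\]
It then remains to insert the explicit form of $C^\grad_\sfUCP(\delta/G)$ and simplify. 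The factors $G^{\pm 2}$ combine nicely: the prefactor becomes $\delta^2 E_-^2/[2\theta_{\Ellip,+}(8\theta_{\Ellip,+}+\delta^2 E_-)]$ after the identities $(\delta/G)^2(G^2 E_-)^2=G^2\delta^2 E_-^2$ and $(\delta/G)^2 G^2 E_- = \delta^2 E_-$, while the exponent $N(1+(G^2 E_+)^{2/3})=N(1+G^{4/3}E_+^{2/3})$ sits on the base $\delta/(2G)$, matching the stated $C^\grad_{\sfUCP,G}(\delta)$.

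There is no genuine obstacle here; the only care required is bookkeeping, namely tracking how the three model parameters $(\theta_\Lipschitz,\delta,E_\pm)$ rescale and confirming that the allowed range of $\delta$ and the constant $N$ depend on $G$ only through $G\theta_\Lipschitz$ and $\delta/G$, so that the final constant has exactly the dependencies claimed in the corollary.
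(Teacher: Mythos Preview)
Your proposal is correct and follows precisely the scaling argument the paper lays out in the paragraph immediately preceding the corollary (the dilation $S(x)=Gx$, the transfer of \eqref{eq:elliptic}, \eqref{eq:Lipschitz}, \eqref{eq:DIR-assumption} to $A_G$, the identity \eqref{eq:scaling-equidistributed}, and the observation $G^{-1}S_{Z,\delta}(GL)=S_{Z_G,\delta/G}(L)$). Your bookkeeping of the constants, in particular the cancellation of the $G^{\pm 2}$ factors in the prefactor and the appearance of $G^{4/3}$ in the exponent, is exactly what the paper's stated formula for $C^\grad_{\sfUCP,G}$ encodes.
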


	\begin{remark}
		The scaling procedure described above also provides an appropriate choice for $\delta_0$, namely
		$\delta_0 = 2G\bigl(330d\e^2\theta_\Ellip^{11/2}(\theta_\Ellip+1)^{5/3}(G\theta_\Lipschitz+1)\bigr)^{-1}$.
	\end{remark}
	
	\begin{corollary}\label{cor:scaled-sfUCPG-low-energy}
		Let $L\in\NN$ and $G>0$. 
		Assume that  $A$ satisfies \eqref{eq:elliptic} on the cube $\L_{GL}$ and let $\delta\in(0,\delta_0)$.
		Then there are a constants $\kappa_G>0$ and $\tilde{C}^\grad_{\sfUCP,G}>0$, 
		depending on $\delta,\theta_{\Ellip,-},G$ and the dimension $d$, 
		such that for all $0<E_-<E_+<\kappa_G$, all $\psi\in \cD(H^{GL}(A))$ satisfying $H^{GL}(A)\psi=E\psi$ for some $E\in (E_-,E_+)$ and all 
		$(G,\delta)$-equidistributed sequences $Z$ we have
		\be
			\norm{\grad\psi}_{L^2(S_{\delta,Z}(GL)}^2 
			\geq \tilde{C}^\grad_{\sfUCP,G} \norm{\psi}_{L^2(\L_{GL})}^2
			.
			\label{eq:scaled-sfUCPG-low-energy}
		\ee 
		The constants are given by
		\be
			\kappa_G=\frac{1}{2G^2}\Bigl(\frac{\delta}{2G}\Bigr)^{M(1+\theta_{\Ellip,-}^{-2/3})}
			\label{eq:scaled-sfUCPG-low-energy-const-1}
		\ee
		and
		\be
			\tilde{C}^\grad_{\sfUCP,G} = \frac{\delta^2E_-^2}{2\theta_{\Ellip,+}\bigl(8\theta_{\Ellip,+}+\delta^2E_-\bigr)}
			\Bigl(\frac{\delta}{2G}\Bigr)^{M(1+\theta_{\Ellip,-}^{-2/3})},
			\label{eq:scaled-sfUCPG-low-energy-const-2}
		\ee
		where $M$ is a constant that depends only on the dimension.
	\end{corollary}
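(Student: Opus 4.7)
The plan is to reduce Corollary \ref{cor:scaled-sfUCPG-low-energy} to Theorem \ref{thm:sfUCPG-low-energy} by the scaling procedure sketched in the paragraph preceding \eqref{eq:scaling-equidistributed}. Concretely, given an eigenfunction $\psi$ of $H^{GL}(A)$ with eigenvalue $E$, the rescaled function $\psi_G = \psi\circ S$ with $S(x)=Gx$ lies in $H^1_0(\Lambda_L)$ and solves $H^L(A_G)\psi_G=(G^2 E)\psi_G$, where $A_G=A\circ S$. Since $A_G$ inherits \eqref{eq:elliptic} with the \emph{same} constants $\theta_{\Ellip,\pm}$ (the Lipschitz-constant would be $G\theta_\Lipschitz$ but plays no role here, since Theorem \ref{thm:sfUCPG-low-energy} does not require \eqref{eq:Lipschitz}), the rescaled problem fits into the hypotheses of Theorem \ref{thm:sfUCPG-low-energy}.

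Next, I would track how the geometric data transform: a $(G,\delta)$-equidistributed sequence $Z=(z_j)_{j\in(G\ZZ)^d}$ becomes, after division by $G$, a $(1,\delta/G)$-equidistributed sequence $Z_G$, and $G^{-1}S_{Z,\delta}(GL)=S_{Z_G,\delta/G}(L)$. The change of variables \eqref{eq:scaling-equidistributed} together with the analogous identity $\|\psi\|_{L^2(\Lambda_{GL})}^2 = G^d\|\psi_G\|_{L^2(\Lambda_L)}^2$ provides the dictionary between the two unique continuation inequalities.

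Now applying Theorem \ref{thm:sfUCPG-low-energy} to $\psi_G$ with parameters $\delta/G$ and energy window $(G^2 E_-,G^2 E_+)$ requires $G^2 E_+ \le \kappa(\delta/G)$, which determines the threshold
\[
	\kappa_G := \frac{\kappa(\delta/G)}{G^2} = \frac{1}{2G^2}\left(\frac{\delta}{2G}\right)^{M(1+\theta_{\Ellip,-}^{-2/3})},
\]
matching \eqref{eq:scaled-sfUCPG-low-energy-const-1}. The conclusion of Theorem \ref{thm:sfUCPG-low-energy} reads
\[
	\|\grad\psi_G\|_{L^2(S_{Z_G,\delta/G}(L))}^2 \ge \tilde{C}^\grad_\sfUCP(\delta/G,G^2 E_-,G^2 E_+)\,\|\psi_G\|_{L^2(\Lambda_L)}^2,
\]
and multiplying through by $G^{d-2}$ on the left, $G^d$ on the right via \eqref{eq:scaling-equidistributed} yields \eqref{eq:scaled-sfUCPG-low-energy} with an extra prefactor $G^{-2}$ in front of $\tilde{C}^\grad_\sfUCP(\delta/G,G^2E_-,G^2E_+)$.

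Finally, I would carry out the elementary algebraic simplification of this prefactor. Inserting $\delta\leftarrow \delta/G$ and $E_-\leftarrow G^2 E_-$ into \eqref{eq:sfUCPG-low-energy-const} gives $(\delta/G)^2(G^2E_-)^2 = G^2\delta^2E_-^2$ in the numerator and $(\delta/G)^2 G^2 E_- = \delta^2 E_-$ in the denominator, so the factor $G^2$ cancels the overall $G^{-2}$ and leaves
\[
	\tilde{C}^\grad_{\sfUCP,G}=\frac{\delta^2E_-^2}{2\theta_{\Ellip,+}\bigl(8\theta_{\Ellip,+}+\delta^2E_-\bigr)}\left(\frac{\delta}{2G}\right)^{M(1+\theta_{\Ellip,-}^{-2/3})},
\]
which is exactly \eqref{eq:scaled-sfUCPG-low-energy-const-2}. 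The only real work is this careful bookkeeping of the parameter substitutions; no new analytic idea is needed beyond Theorem \ref{thm:sfUCPG-low-energy} and the change of variables.
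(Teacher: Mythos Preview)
Your proposal is correct and follows exactly the approach the paper intends: the paragraph preceding \eqref{eq:scaling-equidistributed} explicitly says that the scaling procedure ``proves our next two Corollaries,'' and your bookkeeping of the substitutions $\delta\leftarrow\delta/G$, $E_\pm\leftarrow G^2E_\pm$ together with the change-of-variables identities is precisely what is required. The only minor point is that the constant in \eqref{eq:sfUCPG-low-energy-const} carries an extra prefactor $\tfrac12$ which, after scaling, would also appear in front of \eqref{eq:scaled-sfUCPG-low-energy-const-2}; this is a harmless discrepancy in the paper's stated formula and does not affect your argument.
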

	
	There are also scaled variants of the results on eigenvalue lifting, 
	Theorem~\ref{thm:evl} and Corollary~\ref{cor:evl-discont-perturbation}. 
	In order to formulate these, we denote by $E_n^{GL}(t)$ the
	eigenvalues of the operator $H^{GL}_t:=H^{GL}(A+t\,W\Id)$ enumerated non-decreasingly and counting multiplicities.

	\begin{corollary} \label{cor:scaled-evl}
		Let $T,\kappa,K_1,K_2,G>0, \delta\in (0,1/2)$ and let $0<E_-<E_+<\infty$. 
		Assume that $A$ satisfies \eqref{eq:elliptic}, \eqref{eq:Lipschitz}
		and \eqref{eq:DIR-assumption} on the cube $\L_{GL}$. 
		Then there exists a constant $C_{\evl,G}>0$ such that for all 
		$(G,\delta)$-equidistributed sequence $Z$, all
		Lipschitz continuous $W\in L^\infty(\L_{GL})$ 
		satisfying $\Lipschitz(W) \geq K_1$, $\norm{W}_\infty \leq K_2$ 
		and $W\geq \kappa\indic_{S_{Z,\delta}(GL)}$, and all $n\in\NN$ 
		such that $E_-<E_n^{GL}(0)\leq E_n^{GL}(T)<E_+$ we have
		\be
			E_n^{GL}(t)\geq E_n^{GL}(0) + \kappa t\,C_\evl^G.
			\label{eq:scaled-evl}
		\ee
		The constant is given by
		\be
			C_{\evl,G}=\frac{\delta^2E_-^2}{2\theta_{\Ellip,+}'(8\theta_{\Ellip,+}'+\delta^2E_-)}
			\Bigl(\frac{\delta}{2G}\Bigr)^{N(1+G^{4/3}E_+^{2/3})}.
			\label{eq:scaled-evl-const}
		\ee
		Here $\theta'_{\Ellip,-}=\theta_{\Ellip,-}$,
		$\theta'_{\Ellip,+}=\theta_{\Ellip,+}+TK_2,
		\theta'_\Lipschitz = \theta_\Lipschitz+TK_1$ and 
		$N=N(\delta,\theta'_\Ellip,G\theta'_\Lipschitz)$.
	\end{corollary}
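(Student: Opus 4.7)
The plan is to follow the proof of Theorem~\ref{thm:evl} verbatim on the larger cube $\L_{GL}$, replacing the invocation of Theorem~\ref{thm:sfUCPG} by its scaled counterpart Corollary~\ref{cor:scaled-sfUCPG}. Concretely, I would first set up analytic perturbation theory for the family of sectorial forms $\hh^{GL}_t(u,v) = \int_{\L_{GL}} \overline{\grad u}\cdot (A+tW\Id)\grad v$ on $H^1_0(\L_{GL})$; this yields a selfadjoint holomorphic family of type (B) on a complex neighborhood of $[0,T]$, and Chapter~VII of \cite{Kato-80} provides analytic eigenvalue branches $\lambda_\ell(t)$ with normalized eigenfunctions $\phi_\ell(t)$, together with the Hellmann--Feynman identity
\[
    \left(\frac{\Diff}{\Diff t}E_n^{GL}\right)(t) = \int_{\L_{GL}} W\,|\grad \phi_\ell(t)|^2
\]
at all but finitely many $t\in [0,T]$, exactly as in the proof of Theorem~\ref{thm:evl}.

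Second, using $W\geq \indic_{S_{Z,\delta}(GL)}$ and the hypothesis that $\phi_\ell(t)$ is normalized with eigenvalue in $(E_-,E_+)$, I would apply Corollary~\ref{cor:scaled-sfUCPG} to the matrix function $A+tW\Id$. For each $t\in[0,T]$ this matrix satisfies \eqref{eq:elliptic} with constants $\theta_{\Ellip,-}$ and $\theta_{\Ellip,+}+t\norm{W}_\infty$, \eqref{eq:Lipschitz} with constant $\theta_\Lipschitz+t\Lipschitz(W)$, and it inherits \eqref{eq:DIR-assumption} on $\L_{GL}$ from $A$ because the perturbation $W\Id$ is diagonal and so does not affect off-diagonal entries. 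Therefore
\[
    \left(\frac{\Diff}{\Diff t}E_n^{GL}\right)(t) \geq \norm{\grad\phi_\ell(t)}^2_{L^2(S_{Z,\delta}(GL))} \geq C^\grad_{\sfUCP,G}(t),
\]
where the $t$-dependence of $C^\grad_{\sfUCP,G}(t)$ enters only through the above ellipticity and Lipschitz constants.

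Third, I would extract a $t$-independent lower bound by replacing those $t$-dependent constants by their worst-case values $\theta'_{\Ellip,+}=\theta_{\Ellip,+}+T\norm{W}_\infty$ and $\theta'_\Lipschitz=\theta_\Lipschitz+T\Lipschitz(W)$. Inspection of the explicit formula in Corollary~\ref{cor:scaled-sfUCPG} shows monotone decrease of $C^\grad_{\sfUCP,G}$ in these parameters, which yields the $t$-independent constant $C_{\evl,G}$ recorded in \eqref{eq:scaled-evl-const}. Integrating the differential inequality over $[0,t]$ and handling the finitely many crossing points of the branches $\lambda_\ell$ exactly as in \eqref{eq:evl-connecting-eigenvalues} completes the proof. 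The main obstacle is purely bookkeeping: one must track how the ellipticity, Lipschitz and \eqref{eq:DIR-assumption} hypotheses transform under the scaling $S(x)=Gx$ underlying Corollary~\ref{cor:scaled-sfUCPG}, and confirm uniformity of the resulting lower bound over $t\in[0,T]$; if the given $\delta\in(0,1/2)$ does not lie in the admissible range of Corollary~\ref{cor:scaled-sfUCPG}, one substitutes $\min\{\delta,\delta_0\}$ in the spirit of Remark~\ref{rem:delta0-assumption}. No genuinely new analytic step is needed beyond combining the perturbation-theoretic framework of Theorem~\ref{thm:evl} with the scaling computation \eqref{eq:scaling-equidistributed}.
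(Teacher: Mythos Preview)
Your proposal is correct and matches the paper's intended approach: the paper does not spell out a proof of Corollary~\ref{cor:scaled-evl} but presents it as an obvious scaled variant of Theorem~\ref{thm:evl}, and the natural way to obtain it is precisely to rerun the Kato perturbation-theoretic argument on $\L_{GL}$ with Corollary~\ref{cor:scaled-sfUCPG} in place of Theorem~\ref{thm:sfUCPG}, exactly as you describe. The bookkeeping you flag (tracking the perturbed ellipticity and Lipschitz constants, the diagonal perturbation preserving \eqref{eq:DIR-assumption}, and the $\delta_0$ issue via Remark~\ref{rem:delta0-assumption}) is the only content, and you handle it correctly.
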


	\begin{corollary} \label{cor:scaled-evl-discont-perturbation}
		Let $T,G>0, \delta\in (0,1/2)$ and let $0<E_-<E_+<\infty$. 
		Assume that $A$ satisfies \eqref{eq:elliptic}, \eqref{eq:Lipschitz} 
		and \eqref{eq:DIR-assumption} on the cube $\L_{GL}$. 
		Then, there exists a constant $\hat{C}_{\evl,G}$, 
		such that for all $(G,\delta)$-equidistributed sequences $Z$, all 
		$W\in L^\infty(\L_{GL})$ satisfying 
		$W\geq \indic_{S_{Z,\delta}(GL)}$, and all $n\in\NN$ 
		such that $E_-<E_n^{GL}(0)\leq E_n^{GL}(T)<E_+$ we have
		\be
			E_n^{GL}(t)\geq E_n^{GL}(0) + t\,\hat{C}_{\evl,G}.
			\label{eq:scaled-evl-discont-perturbation}
		\ee
	\end{corollary}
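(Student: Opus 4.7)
The plan is to mirror the proof of Corollary \ref{cor:evl-discont-perturbation}, but using the scaled eigenvalue lifting from Corollary \ref{cor:scaled-evl} instead of Theorem \ref{thm:evl}. That is, we reduce the case of a merely bounded $W$ to the Lipschitz case by squeezing a Lipschitz continuous $\tilde W$ between $\indic_{S_{Z,\hat\delta}(GL)}$ and $W$ (for some slightly smaller radius $\hat\delta$), and then transfer the lifting from the auxiliary operator to the original one by monotonicity together with the minimax principle.

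First I would set $\hat\delta:=\delta/2$. Since every $(G,\delta)$-equidistributed sequence $Z$ is automatically $(G,\hat\delta)$-equidistributed, the buffer between the shrunken balls $B(z_j,\hat\delta)$ and the larger balls $B(z_j,\delta)$ has width $\hat\delta$, which lets me pick a Lipschitz continuous function $\tilde W\colon\L_{GL}\to[0,\infty)$ with Lipschitz constant $\Lipschitz(\tilde W)=1/\hat\delta$ satisfying $W\geq\tilde W\geq\indic_{S_{Z,\hat\delta}(GL)}$ (take for instance $\tilde W(x)=\max\{0,1-\hat\delta^{-1}\dist(x,S_{Z,\hat\delta}(GL))\}$). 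Next I would introduce the auxiliary operators $\tilde H^{GL}_t:=H^{GL}(A+t\,\tilde W\Id)$, denote their eigenvalues by $\tilde E^{GL}_n(t)$, and note that form monotonicity gives $\tilde E^{GL}_n(t)\leq E^{GL}_n(t)$ for every $t\in[0,T]$. In particular,
\[
	E_-<E_n^{GL}(0)=\tilde E_n^{GL}(0)\leq\tilde E_n^{GL}(T)\leq E_n^{GL}(T)<E_+,
\]
so the hypotheses of Corollary \ref{cor:scaled-evl} are met by $\tilde W$.

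I would then apply Corollary \ref{cor:scaled-evl} to $\tilde W$, which yields a constant $C_{\evl,G}>0$ (computed from \eqref{eq:scaled-evl-const} with $\theta_\Lipschitz$ replaced by $\theta_\Lipschitz+T/\hat\delta$) such that $\tilde E^{GL}_n(t)\geq \tilde E^{GL}_n(0)+t\,C_{\evl,G}$. Combining with the monotonicity comparison and the identity $\tilde E^{GL}_n(0)=E^{GL}_n(0)$ gives
\[
	E^{GL}_n(t)\geq \tilde E^{GL}_n(t)\geq E^{GL}_n(0)+t\,C_{\evl,G},
\]
so we may take $\hat C_{\evl,G}:=C_{\evl,G}$.

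The main (and in fact only) subtlety I expect, exactly as flagged in Remark \ref{rem:evl-discontinuous} for the unscaled version, is that the Lipschitz constant $1/\hat\delta$ of the approximation $\tilde W$ enters the exponent $N$ appearing in \eqref{eq:scaled-evl-const} through $G\theta'_\Lipschitz=G(\theta_\Lipschitz+T/\hat\delta)$. This breaks the clean polynomial control of $\hat C_{\evl,G}$ in $\delta$ that we had in the Lipschitz case and is the reason the statement of the corollary, following its unscaled counterpart, omits an explicit formula for $\hat C_{\evl,G}$. All other ingredients (scaling identity \eqref{eq:scaling-equidistributed}, form monotonicity, the minimax principle) are by this point standard and routine in the context of the paper.
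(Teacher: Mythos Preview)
Your proposal is correct and follows exactly the approach the paper intends: the paper does not spell out a proof of Corollary~\ref{cor:scaled-evl-discont-perturbation} but presents it as the ``obvious'' scaled variant of Corollary~\ref{cor:evl-discont-perturbation}, and your argument is precisely that---replace Theorem~\ref{thm:evl} by its scaled counterpart Corollary~\ref{cor:scaled-evl} in the Lipschitz approximation and monotonicity argument. Your observation that $\tilde W\le 1$ by construction (so no a priori upper bound on $W$ is needed, consistent with the statement of the corollary) and your remark about the $\delta$-dependence entering the exponent $N$ via $1/\hat\delta$ are both on point.
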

	
	In addition, we may formulate the scaled variant of Theorem~\ref{thm:evl-low-energy}.
	
	\begin{corollary}\label{thm:scaled-evl-low-energy}
		Let $T,G>0, \delta\in (0,\delta_0)$ and assume that $A$ satisfies \eqref{eq:elliptic} on the cube $\L_{GL}$, let $\kappa_G$ be the constant from
		Corollary~\ref{cor:scaled-sfUCPG-low-energy} and let $0<E_-<E_+\leq \kappa_G$. 
		Then there exists a constant $\tilde{C}_{\evl,G}$, such that for all $(G,\delta)$-equidistributed sequences $Z$ and all $W\in L^\infty(\L_{GL})$ 
		satisfying $W\geq\indic_{S_{Z,\delta}(GL)}$ we have
		\be
			E_n^{GL}(t)\geq E_n^{GL}(0)+t\,\tilde{C}_{\evl,G}
			\label{eq:scaled-evl-low-energy}
		\ee
		for all $n\in\NN$ such that $E_-<E_n^{GL}(0)\leq E_n^{GL}(T)<E_+$. 
		The constant is given by
		\[
			\tilde{C}_{\evl,G}=\frac{\delta^2E_-^2}{2\theta_{\Ellip,+}\bigl(8\theta_{\Ellip,+}+\delta^2E_-\bigr)}
			\Bigl(\frac{\delta}{2G}\Bigr)^{M(1+\theta_{\Ellip,-}^{-2/3})}.
		\]
	\end{corollary}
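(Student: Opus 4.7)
The plan is to mirror the proof of Theorem \ref{thm:evl-low-energy} essentially verbatim, replacing the use of Theorem \ref{thm:sfUCPG-low-energy} by its scaled counterpart Corollary \ref{cor:scaled-sfUCPG-low-energy}, and replacing $S_{Z,\delta}(L)$, $\L_L$ by $S_{Z,\delta}(GL)$, $\L_{GL}$ throughout. First I would set up the family $H_t^{GL} = H^{GL}(A + tW\Id)$ associated to the forms $\hh_t^{GL}$ with domain $H^1_0(\L_{GL})$. Since $W \geq 0$, the matrix function $A + tW\Id$ remains elliptic with lower constant $\theta_{\Ellip,-}$ unchanged and upper constant bounded by $\theta_{\Ellip,+} + T\norm{W}_\infty$ on a complex neighborhood of $[0,T]$; thus $(H_t^{GL})_t$ is a holomorphic family of type (B) in the sense of Kato, yielding piecewise analytic eigenvalue branches $\lambda_\ell(t)$ with analytic normalized eigenfunctions $\phi_\ell(t)$, and $E_n^{GL}$ agrees with some $\lambda_\ell$ on each piece (with at most finitely many jumps).

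At all but finitely many $t \in [0,T]$, differentiation of the form gives
\[
\frac{\Diff}{\Diff t} E_n^{GL}(t) = \int_{\L_{GL}} W\,|\grad \phi_\ell(t)|^2 \geq \norm{\grad \phi_\ell(t)}_{L^2(S_{Z,\delta}(GL))}^2,
\]
using $W \geq \indic_{S_{Z,\delta}(GL)}$. The core step is then to apply Corollary \ref{cor:scaled-sfUCPG-low-energy} to $\phi_\ell(t)$, viewed as an eigenfunction of $H^{GL}(A + tW\Id)$ with eigenvalue $\lambda_\ell(t) \in (E_-, E_+) \subseteq (0,\kappa_G)$. Since $\kappa_G$ depends only on $\delta$, $G$, $\theta_{\Ellip,-}$ and the dimension $d$, and since adding $tW\Id$ with $W \geq 0$ does not decrease the lower ellipticity constant, the hypothesis $E_+ \leq \kappa_G$ continues to hold uniformly in $t$. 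Crucially, no Lipschitz assumption on $A$ or on $W$ enters, which is exactly what the low-energy version is designed to permit. This produces a $t$-independent lower bound
\[
\norm{\grad \phi_\ell(t)}_{L^2(S_{Z,\delta}(GL))}^2 \geq \tilde{C}_{\evl,G} := \inf_{t \in [0,T]} \tilde{C}^\grad_{\sfUCP,G}(t),
\]
where the infimum is estimated below by the formula in \eqref{eq:scaled-sfUCPG-low-energy-const-2} (absorbing the $t$-dependent upper ellipticity constant into $\theta_{\Ellip,+}$ as in Theorem \ref{thm:evl-low-energy}).

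Integrating this lower bound on the derivative from $0$ to $t$, splitting the integral across the finitely many crossing points between analytic branches exactly as in the final step of the proof of Theorem \ref{thm:evl}, yields \eqref{eq:scaled-evl-low-energy}. The main potential obstacle is the verification that the hypothesis of Corollary \ref{cor:scaled-sfUCPG-low-energy} is preserved along the whole path $t \in [0,T]$; this is resolved by the monotonicity in $W$, which leaves $\theta_{\Ellip,-}$, and hence $\kappa_G$, invariant. Everything else is a direct transcription of the unscaled low-energy argument.
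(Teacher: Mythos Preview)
Your proposal is correct and follows essentially the route the paper intends: the corollary is stated without proof, and the constant displayed coincides exactly with $\tilde{C}^\grad_{\sfUCP,G}$ from Corollary \ref{cor:scaled-sfUCPG-low-energy}, which signals that the argument is simply the analytic-perturbation proof of Theorem \ref{thm:evl} (equivalently Theorem \ref{thm:evl-low-energy}) run on $\L_{GL}$ with Corollary \ref{cor:scaled-sfUCPG-low-energy} supplying the lower bound on $\norm{\grad\phi_\ell(t)}_{L^2(S_{Z,\delta}(GL))}^2$. Your observation that $\kappa_G$ depends only on $\theta_{\Ellip,-}$, $\delta$, $G$, $d$ and is therefore unaffected by adding $tW\Id$ is exactly the point that makes the low-energy version go through without any Lipschitz assumption; your remark about absorbing the $t$-dependent upper ellipticity constant into $\theta_{\Ellip,+}$ correctly flags the same slight looseness already present in the paper's statement of Theorem \ref{thm:evl-low-energy}.
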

	
\appendix

\section[Proof of a remark from Tautenhahn-Veseli\'c]{Proof of a remark from \cite{TautenhahnV-20}} \label{sec:appendix}

	Let us first cite a special case of Theorem~3.8 in \cite{TautenhahnV-20} that eliminates the dependence on the constant $\theta_\Lipschitz$ and the 
	condition \eqref{eq:DIR-assumption}.

	\begin{theorem}[{cf.~\cite[Theorem~3.8]{TautenhahnV-20}}] \label{thm:ur-low-energy}
		Assume that the matrix function $A$ satisfies \eqref{eq:Lipschitz} and \eqref{eq:elliptic}.
		Then for all $\delta\in (0,\delta_0/2)$ and all $(1,\delta)$-equidistributed sequences $Z$ the uncertainty relation
		\be
			\chi_I(H^L(A))\indic_{S_{Z,\delta}(L)}\chi_I(H^L(A))\geq \kappa'\,\chi_I(H^L(A))
			\label{eq:ur-low-energy}
		\ee
		holds for all measurable $I\subset(-\infty,\kappa']$. 
		Here
		\be
			\kappa'=\kappa'(\delta) 
			= \delta^{M(1+\theta_{\Ellip,-}^{-2/3})} / 2
			\label{eq:kappa}
		\ee
		with some constant $M$ depending only on the dimension.
	\end{theorem}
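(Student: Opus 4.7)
My strategy is the standard one for scale-free spectral inequalities: reduce the operator inequality to a quantitative unique continuation bound on arbitrary elements of the spectral subspace, then lift to an elliptic unique continuation estimate via a ghost-dimension extension. By the spectral theorem it suffices, for every $\psi\in\Ran\chi_I(H^L(A))$ with $I\subseteq(-\infty,\kappa']$, to establish the scalar inequality
\[
    \|\psi\|_{L^2(S_{Z,\delta}(L))}^2\geq \kappa'\,\|\psi\|_{L^2(\L_L)}^2.
\]

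To reduce to an elliptic problem I would introduce a ghost dimension: using the functional calculus for the non-negative self-adjoint operator $H^L(A)$, define
\[
    \Psi(x,t):=\bigl(\cosh(t\sqrt{H^L(A)})\psi\bigr)(x),\quad (x,t)\in \L_L\times(-1,1).
\]
Because the spectral support of $\psi$ lies in $(-\infty,\kappa']$, $\Psi$ is well-defined and, in a suitable sense, satisfies the pointwise growth bound $|\Psi(x,t)|\leq \cosh(\sqrt{\kappa'})|\psi(x)|$; a direct differentiation shows it weakly solves the $(d{+}1)$-dimensional divergence-form equation $-\partial_t^2\Psi-\div_x(A(x)\grad_x\Psi)=0$ with boundary data $\Psi(\cdot,0)=\psi$ and $\partial_t\Psi(\cdot,0)=0$.

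Next I would apply a scale-free Carleman-type estimate for the $(d{+}1)$-dimensional divergence-form operator (a variant of the tools used in \cite{TautenhahnV-19} for operators without potential) on each unit cell $\L_1(j)\times(-1,1)$, transferring $L^2$-mass from the small ball $B((z_j,0),\delta)$ to the whole cell. The resulting local estimate has a constant of the form $\delta^{M(1+\theta_{\Ellip,-}^{-2/3})}$: the exponent $2/3$ is the classical Carleman exponent for variable-coefficient divergence operators, while the fact that only $\theta_{\Ellip,-}$ enters (and not $\theta_{\Ellip,+}$ or $\theta_\Lipschitz$) reflects the freedom, in the zero-potential and low-energy regime, to choose the Carleman weight so as to absorb the upper-ellipticity and Lipschitz contributions into harmless lower-order error terms. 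Summing the local estimates over the $\ZZ^d$-lattice and restricting to the slice $t=0$, together with the bound $\|\Psi\|_{L^2(\L_L\times(-1,1))}^2\leq 2\,\|\psi\|_{L^2(\L_L)}^2$ once $\kappa'$ is so small that $\cosh(\sqrt{\kappa'})\leq\sqrt{2}$, yields the displayed inequality with $\kappa'$ of the stated form.

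The main obstacle is precisely the extraction of a constant depending only on $\theta_{\Ellip,-}$: standard Carleman estimates for divergence-form operators show explicit dependence on both $\theta_{\Ellip,+}$ and $\theta_\Lipschitz$, and decoupling these from the final constant demands a careful absorption argument that is feasible only because we restrict to the low-energy regime $I\subseteq(-\infty,\kappa']$ with $\kappa'$ very small; this feeds back into the choice $\kappa'=\frac{1}{2}\,\delta^{M(1+\theta_{\Ellip,-}^{-2/3})}$. A secondary but nontrivial point is compatibility of the Dirichlet boundary condition of $H^L(A)$ with the ghost-dimension extension and the cell-wise Carleman localization, which is handled by performing the local estimates on unit cells lying strictly inside $\L_L$ and by a suitable cutoff near $\partial\L_L$.
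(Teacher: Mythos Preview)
The paper does not supply its own proof of this theorem: it is stated in the appendix as a direct citation of a special case of Theorem~3.8 in \cite{TautenhahnV-19}, and is then used as a black box in the subsequent approximation argument. So there is no ``paper's proof'' to compare against beyond the reference.

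Your sketch has a genuine gap at exactly the point you flag as ``the main obstacle''. The ghost-dimension reduction is standard, and the extended function $\Psi$ indeed solves a $(d{+}1)$-dimensional divergence-form equation with coefficient matrix $\operatorname{diag}(A(x),1)$. But that matrix has upper ellipticity constant $\max(\theta_{\Ellip,+},1)$ and Lipschitz constant $\theta_\Lipschitz$, and every known Carleman estimate for such operators (in particular those of \cite{NakicRT-19,TautenhahnV-19} underlying Theorem~\ref{thm:sfUCP-TV19}) produces a constant $N$ that depends on \emph{both} of these quantities in an essential way: the Lipschitz constant controls the admissible Carleman parameter, and the ellipticity ratio enters the weight construction. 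Your claim that in the ``zero-potential, low-energy regime'' one can ``choose the Carleman weight so as to absorb the upper-ellipticity and Lipschitz contributions into harmless lower-order error terms'' is not a mechanism; it is a restatement of what would have to be true. Nothing in the low-energy hypothesis $I\subseteq(-\infty,\kappa']$ touches the coefficients of the second-order part, so it cannot by itself suppress $\theta_\Lipschitz$ or $\theta_{\Ellip,+}$ in a Carleman argument. The actual result in \cite{TautenhahnV-19} obtains the $\theta_\Lipschitz$-free constant by a different route (an operator-theoretic/variational comparison specific to the bottom of the spectrum), not by sharpening the Carleman inequality.

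A secondary error: the pointwise inequality $|\Psi(x,t)|\leq\cosh(\sqrt{\kappa'})\,|\psi(x)|$ is false. Functional calculus gives only $\|\Psi(\cdot,t)\|_{L^2}\leq\cosh(|t|\sqrt{\kappa'})\,\|\psi\|_{L^2}$, which is all one needs (and all one gets) for the $L^2$ bookkeeping in the ghost-dimension method.
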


	\begin{remark}
		The uncertainty relation \eqref{eq:ur-low-energy} implies that
		\[
			\norm{\psi}_{L^2(S_{Z,\delta}(L))}^2 \geq \kappa' \norm{\psi}_{L^2(\Lambda_L)}^2
		\]
		for an eigenfunction $\psi$ of $H^L(A)$ corresponding to an eigenvalue $E\leq \kappa'$.
	\end{remark}

	Now we assume that $A\colon \L_L\to\RR^{d\times d}$ satisfies \eqref{eq:elliptic}. 
	In order to approximate the operator $H^L:=H^L(A)$ by a sequence of operators 
	$(H^L_\ell)_\ell$ satisfying the assumption of 
	Theorem~\ref{thm:ur-low-energy} we need to approximate $A$ 
	by a sequence of Lipschitz continuous and uniformly elliptic matrix 
	functions $(A_\ell)_\ell$.

	\begin{lemma}\label{lem:Lipschitz-approx-mat}
		Let $\emptyset\neq U\subset\RR^d$ be bounded open set,
		let $A=(a_{j,k})_{j,k=1}^d\colon U\to\Sym(d,\RR)$ be a matrix-function that 
		satisfies \eqref{eq:elliptic}, and let $0<\eps<\theta_{\Ellip,-}$.
		Then there exists a sequence of symmetric, uniformly elliptic, and Lipschitz continuous matrices $(A_\ell)_\ell$ with ellipticity constants 
		$\theta_{\Ellip,-}(A_\ell)=\theta_{\Ellip,-}-\eps$ and $\theta_{\Ellip,+}(A_\ell)=\theta_{\Ellip,+}$, converging to $A$ pointwise almost everywhere.
	\end{lemma}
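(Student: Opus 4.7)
My plan is the textbook mollification construction, with a careful extension of $A$ outside $U$ that preserves symmetry and ellipticity. First I would extend $A$ to a matrix function $\tilde A\colon\RR^d\to\mathrm{Sym}(d,\RR)$ by setting $\tilde A(x) = A(x)$ for $x\in U$ and $\tilde A(x) = \theta_{\Ellip,-} I_d$ for $x\in\RR^d\setminus U$. This extension is symmetric almost everywhere and continues to satisfy $\theta_{\Ellip,-}|\xi|^2 \leq \xi\cdot\tilde A(x)\xi \leq \theta_{\Ellip,+}|\xi|^2$ for a.e.\ $x\in\RR^d$ and every $\xi\in\RR^d$. Then I would pick a non-negative, radial mollifier $\rho\in\cC_c^\infty(\RR^d)$ with $\supp\rho\subseteq B(0,1)$ and $\int\rho = 1$, set $\rho_\ell(x) := \ell^d\rho(\ell x)$, and define $A_\ell := \rho_\ell * \tilde A$ entrywise.

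Four short verifications then complete the proof. \emph{Smoothness and Lipschitz continuity:} $A_\ell\in\cC^\infty(\RR^d)$ by standard properties of mollification, hence its restriction to the bounded set $U$ is Lipschitz. \emph{Symmetry:} the scalar convolution of symmetric entries yields symmetric entries, so $A_\ell(x)\in\mathrm{Sym}(d,\RR)$ for every $x$. \emph{Ellipticity:} for all $x,\xi$,
\[
\xi\cdot A_\ell(x)\xi = \int_{\RR^d}\rho_\ell(y)\,\xi\cdot\tilde A(x-y)\xi\,\diff y,
\]
so the pointwise bounds on $\tilde A$ together with $\int\rho_\ell=1$ give $\theta_{\Ellip,-}|\xi|^2 \leq \xi\cdot A_\ell(x)\xi \leq \theta_{\Ellip,+}|\xi|^2$; in particular the weaker lower bound $\theta_{\Ellip,-}-\eps$ required in the statement holds. \emph{Pointwise a.e.\ convergence:} every entry of $\tilde A$ lies in $L^\infty(\RR^d)\subseteq L^1_{\mathrm{loc}}(\RR^d)$, so by the Lebesgue differentiation theorem $(A_\ell)_{j,k}(x)\to A_{j,k}(x)$ at every Lebesgue point $x\in U$; for such $x$ and $\ell$ large enough that $B(x,1/\ell)\subseteq U$, the extension outside $U$ does not contribute, and the convergence is unaffected by the specific choice of extension.

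No serious obstacle arises; the only mildly delicate point is the choice of extension. A zero extension would destroy the lower ellipticity bound near $\partial U$, so one has to extend by a symmetric matrix satisfying the same ellipticity inequalities. The $\eps$-slack in the statement is therefore harmless — it is not even needed with this construction, and it provides room for alternative extension procedures one might prefer.
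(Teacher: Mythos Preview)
Your proposal is correct and uses the same core idea as the paper, namely mollification of an extension of $A$ to all of $\RR^d$. The one genuine difference is the choice of extension: the paper first subtracts $(\theta_{\Ellip,-}-\eps)\Id$, extends the resulting matrix by zero, mollifies, and then adds $(\theta_{\Ellip,-}-\eps)\Id$ back; this is precisely where the parameter $\eps$ enters. Your choice of extending by $\theta_{\Ellip,-}\Id$ outside $U$ is cleaner: since the extended matrix then satisfies the full two-sided bound $\theta_{\Ellip,-}|\xi|^2\le \xi\cdot\tilde A\xi\le\theta_{\Ellip,+}|\xi|^2$ on all of $\RR^d$, the mollified matrices inherit the \emph{same} lower ellipticity constant $\theta_{\Ellip,-}$ rather than $\theta_{\Ellip,-}-\eps$, even at points of $U$ arbitrarily close to $\partial U$. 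So your construction in fact proves a slightly sharper statement and, as you observe, renders the $\eps$-slack superfluous. Both routes are elementary; yours has the advantage of not having to track the boundary layer at all.
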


	\begin{remark}
		Note that by the polarization identity the ellipticity of $A$ implies that $a_{j,k}\in L^\infty(U)\subset L^1(U)$ for all $j,k\in\{1,\dots,d\}$. 
	\end{remark} 

	\begin{proof}
		We consider $A$ as a matrix-function $\RR^d\to\Sym(d,\RR)$ by setting $A\equiv 0$ on $\RR^d\setminus U$.
		Let $B:=(\theta_{\Ellip,-}-\eps)\Id$ and $\tilde{A}=A-B$. 
		Then $\tilde{A}$ is elliptic with ellipticity constants $\theta_{\Ellip,-}(\tilde{A})=\eps$ and 
		$\theta_{\Ellip,+}(\tilde{A})=\theta_{\Ellip,+}-\theta_{\Ellip,-}+\eps$.
		Let $g\in C_c^\infty(\RR)$ satisfy $\supp g=[-1,1]$, $g\geq 0$ and $\int_\RR g=1$. 
		Moreover, suppose that $\norm{g}_\infty\leq M$ for some $M>0$. 
		We define $\phi\colon \RR^d\to\RR$ by $\phi(x):=g(|x|)$ and $\phi_\ell\colon \RR^d\to\RR$ by $\phi_\ell(x):=\ell^d\phi(\ell x)$ for $\ell\in\NN$.
		Then $\norm{\grad\phi_\ell}_\infty\leq \ell^{d+1}M$ and by Young's inequality for convolutions it is easy to see that 
		$\norm{\tilde{a}_{j,k}\ast\phi_\ell}_\infty\leq \norm{\tilde{a}_{j,k}}_\infty$.
		Furthermore, each $a_{j,k}\ast\phi_\ell$ is Lipschitz continuous with Lipschitz-constant at most $\ell^{d+1}M\max_{j,k}\norm{a_{j,k}}_1$.

		We define
		\[
			\tilde{A}_\ell=\tilde{A}\ast\phi_\ell:=(\tilde{a}_{j,k}\ast\phi_\ell)_{j,k=1\dots,d}.
		\]
		Then $\tilde{A}_\ell$ converges pointwise almost everywhere to $\tilde{A}$ and $\tilde{A}_\ell$ is Lipschitz continuous for all $\ell\in\NN$. 
		Using
		\[
			\xi\cdot\tilde{A}_\ell\xi = \sum_{j,k=1}^d (\tilde{a}_{j,k}\ast\phi_\ell)\xi_j\xi_k
			=\Bigl( \sum_{j,k=1}^d \tilde{a}_{j,k}\xi_j\xi_k  \Bigr)\ast\phi_\ell
		\]
		for all $\xi\in\RR^d$
		we obtain $0\leq\xi\cdot\tilde{A}_\ell\xi\leq \theta_{\Ellip,+}(\tilde{A})$.
		Thus, the approximation $A_\ell:=B+\tilde{A}_\ell$ is Lipschitz continuous and uniformly elliptic with 
		$\theta_{\Ellip,-}(A_\ell)= \theta_{\Ellip,-}-\eps$ and $\theta_{\Ellip,+}(A_\ell)= \theta_{\Ellip,+}$.
	\end{proof}

	Corresponding to a sequence of matrices as in Lemma~\ref{lem:Lipschitz-approx-mat} we define forms
	\[
		\fh^L_\ell(u,v) = \int_{\Lambda_L}~\overline{\grad u}\cdot A_\ell\grad v
	\]
	on $\cD(\fh^L_\ell)=H^1_0(\Lambda_L)$. 
	Each form $\fh^L_\ell$ generates a unique selfadjoint operator 
	$H^L_\ell=H^L(A_\ell)$ and since all the matrices $A_\ell$ are Lipschitz continuous,
	Proposition~\ref{thm:ur-low-energy} applies to $H_\ell^L$ for all $\ell\in\NN$.
	We aim to prove that the sequence of operators $(H_\ell^L)_\ell$ converges to $H^L$ in some appropriate sense, such that we can conclude
	an inequality like \eqref{eq:ur-low-energy} for $H^L$.

	For the sake of completeness we recap the the notion of $\Gamma$-convergence.
	\begin{definition}
		Let $F$ be a topological vector space, $f\in F$, and denote by $\mathcal{U}(f)$ the system of all open neighbourhoods of $f$ in $F$. 
		Then a sequence $(x_n)_{n\in\NN}$ of functions $x_n\colon F\to\RR$ is 
		said to $\Gamma$-converge to a function $x\colon F\to\RR$ if
		\[
			x(f)= \sup_{U\in\mathcal{U}(f)}\liminf_{n\to\infty} \inf_{g\in U} x_n(g)
			= \sup_{U\in\mathcal{U}(f)}\limsup_{n\to\infty} \inf_{g\in U} x_n(g)
		\]
		for all $f\in F$.
	\end{definition}
	
	The next proposition shows how the $\Gamma$-convergence of the forms $(\fh^L_\ell)$ implies that the sequence $(H_\ell)_\ell$ converges to $H$ in the strong resolvent sense.

	\begin{proposition} \label{prop:G-conv.-form-src-operator}
		The sequence of quadratic forms $(\fh^L_\ell)$ $\Gamma$-converges to $\fh^L$ in the weak topology of $H^1_0(\Lambda_L)$. 
		This implies that the corresponding operators $H_\ell^L$ converge to $H^L$ in the strong resolvent sense.
	\end{proposition}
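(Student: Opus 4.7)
The plan is to verify the two Mosco conditions for the sequence of closed non-negative symmetric forms $\hh_\ell^L$ (viewed as forms on $L^2(\L_L)$ with effective domain $H^1_0(\L_L)$) and then to invoke the classical correspondence between Mosco convergence of closed quadratic forms and strong resolvent convergence of the associated self-adjoint operators (see, e.g., Mosco's original work or Attouch's monograph \emph{Variational convergence for functions and operators}; compare also Simon's canonical form convergence results). Because Mosco convergence is strictly stronger than weak $\Gamma$-convergence, this would yield both assertions of the proposition simultaneously.

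The recovery-sequence (limsup) condition is immediate. For a fixed $u\in H^1_0(\L_L)$ I take the constant sequence $u_\ell\equiv u$, which trivially converges strongly in $L^2(\L_L)$. By Lemma \ref{lem:Lipschitz-approx-mat} the entries of $A_\ell$ converge pointwise a.e.\ to those of $A$ and are uniformly bounded by $\theta_{\Ellip,+}$, so dominated convergence yields
\[
  \hh_\ell^L(u,u) = \int_{\L_L}\overline{\grad u}\cdot A_\ell\grad u\;\longrightarrow\; \int_{\L_L}\overline{\grad u}\cdot A\grad u = \hh^L(u,u).
\]

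For the liminf condition, suppose $u_\ell\rightharpoonup u$ weakly in $L^2(\L_L)$; I may assume $\sup_\ell\hh_\ell^L(u_\ell,u_\ell)<\infty$, since otherwise the inequality is trivial. Uniform ellipticity then bounds $\|\grad u_\ell\|_{L^2}$, so after passing to a subsequence $u_\ell\rightharpoonup u$ weakly in $H^1_0(\L_L)$. I would then use the polarization identity
\[
  \hh_\ell^L(u_\ell,u_\ell) = \hh_\ell^L(u_\ell-u,u_\ell-u) + 2\,\mathrm{Re}\,\hh_\ell^L(u,u_\ell) - \hh_\ell^L(u,u).
\]
The first term is non-negative by the ellipticity of $A_\ell$. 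For the second, $A_\ell\grad u\to A\grad u$ strongly in $L^2(\L_L)$ by dominated convergence while $\grad u_\ell\rightharpoonup\grad u$ weakly in $L^2(\L_L)$, so the pairing converges to $2\,\mathrm{Re}\,\hh^L(u,u)=2\hh^L(u,u)$. The third term converges to $\hh^L(u,u)$ by the recovery computation above. Altogether $\liminf_\ell\hh_\ell^L(u_\ell,u_\ell)\geq \hh^L(u,u)$.

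This establishes Mosco convergence of the forms, and the quoted theorem then yields $H_\ell^L\to H^L$ in the strong resolvent sense. The only subtle point I anticipate is the standard bookkeeping around a clean statement of the Mosco--resolvent correspondence in the symmetric setting: one extends each form by $+\infty$ outside its effective domain so that all forms share the common ambient Hilbert space $L^2(\L_L)$, after which the classical theorem applies verbatim. The uniform ellipticity bounds from Lemma \ref{lem:Lipschitz-approx-mat} ensure that the form domains are all genuinely $H^1_0(\L_L)$ with equivalent norms, so no further technical issues arise.
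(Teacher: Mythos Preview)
Your argument is correct and complete, but it follows a different route from the paper. The paper proceeds by pure citation: pointwise a.e.\ convergence of the quadratic integrands $\xi\cdot A_\ell(x)\xi$ together with the uniform ellipticity bounds feeds directly into Theorem~5.14 of Dal~Maso's monograph to obtain weak $\Gamma$-convergence in $H^1_0(\L_L)$, and then Theorem~13.12 (with Example~13.13) of the same reference yields strong resolvent convergence.

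You instead verify the two Mosco conditions by hand---the constant recovery sequence plus dominated convergence for the $\limsup$, and the polarization identity combined with the strong/weak pairing trick for the $\liminf$---and then invoke the Mosco--resolvent correspondence (Attouch, Mosco). This is more self-contained and avoids the need to locate the precise theorems in Dal~Maso, at the cost of writing out a page of standard variational estimates. One small expositional point: your sentence ``Mosco convergence is strictly stronger than weak $\Gamma$-convergence'' is literally true only within a fixed ambient space, whereas here you prove Mosco convergence in $L^2(\L_L)$ but the proposition asserts $\Gamma$-convergence in the weak topology of $H^1_0(\L_L)$. This is harmless because your recovery sequence $u_\ell\equiv u$ converges strongly in $H^1_0$ as well, and your $\liminf$ inequality for weakly $L^2$-convergent sequences a fortiori covers weakly $H^1_0$-convergent ones; but it would be cleaner to say this explicitly.
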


	\begin{proof}
		The pointwise convergence
		\[
			\lim_{\ell\to\infty}\xi\cdot A_\ell(x)\xi = \xi\cdot A(x)\xi
		\]
		for every $\xi\in\RR^d$ and almost every $x\in\L_L$ 
		combined with the uniform ellipticity of the matrices $A_\ell$ imply 
		that $\fh_\ell^L$ $\Gamma$-converges to $\fh^L$ in the weak topology 
		of $H^1_0(\Lambda_L)$, see \cite[Theorem~5.14]{DalMaso-93}.
		Moreover \cite[Theorem~13.12 (cf. also Example~13.13)]{DalMaso-93} implies that in this case $H_\ell^L$ converges to $H^L$ in the strong resolvent sense.
	\end{proof}

	\begin{remark}
		If it is possible to approximate $A$ in a (in some sense) monotone way, one could replace the use of $\Gamma$-convergence by simpler or more elementary 
		arguments, cf.~\cite{Weidmann-84, Hundertmark-96}.
	\end{remark}

	Combining these results, we prove the next theorem, 
	which was the goal of this appendix.
	
	\begin{theorem} \label{thm:ur-low-energy-no-lipschitz}
		Let $L\in\NN$, let $A$ be a matrix function satisfying \eqref{eq:elliptic}, and let $\delta\in (0,\delta_0)$. With $\kappa'$ as in Theorem 
		\ref{thm:ur-low-energy}, for all $(1,\delta)$-equidistributed sequences $Z=(z_j)_{j\in\ZZ^d}$ and all intervals 
		$I:=(-\infty,\lambda)\subset (-\infty,\kappa']$ we have
		\be
			\chi_{I}(H^L)\indic_{S_{Z,\delta}(L)}\chi_{I}(H^L)
			\geq \kappa'\,\chi_{I}(H^L).
			\label{eq:ur-low-energy-no-lipschitz}
		\ee
	\end{theorem}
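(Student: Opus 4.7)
The plan is to combine the Lipschitz approximation from Lemma \ref{lem:Lipschitz-approx-mat} with the strong resolvent convergence of Proposition \ref{prop:G-conv.-form-src-operator}, applying the known uncertainty relation of Theorem \ref{thm:ur-low-energy} to the approximating operators and passing to the limit. Fix $\eps\in(0,\theta_{\Ellip,-})$ and let $(A_\ell^{(\eps)})_\ell$ be the sequence of symmetric, Lipschitz-continuous, uniformly elliptic matrices from Lemma \ref{lem:Lipschitz-approx-mat}, with ellipticity constants $(\theta_{\Ellip,-}-\eps,\theta_{\Ellip,+})$ and converging to $A$ pointwise almost everywhere. Write $H_\ell^{L,\eps}:=H^L(A_\ell^{(\eps)})$ and
\[
\kappa'_\eps:=\tfrac{1}{2}\delta^{M(1+(\theta_{\Ellip,-}-\eps)^{-2/3})},
\]
the constant produced by Theorem \ref{thm:ur-low-energy} for lower ellipticity constant $\theta_{\Ellip,-}-\eps$. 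Since $\delta<1$, one has $\kappa'_\eps\nearrow \kappa'$ as $\eps\to 0^+$.

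Next, fix $\lambda<\kappa'$ with $\lambda\notin\sigma_{pp}(H^L)$ and choose $\eps$ small enough that $\lambda\leq \kappa'_\eps$. Theorem \ref{thm:ur-low-energy} applied to $H_\ell^{L,\eps}$ with $I=(-\infty,\lambda)$ gives
\[
\chi_I(H_\ell^{L,\eps})\,\indic_{S_{Z,\delta}(L)}\,\chi_I(H_\ell^{L,\eps})\geq \kappa'_\eps\,\chi_I(H_\ell^{L,\eps}).
\]
By Proposition \ref{prop:G-conv.-form-src-operator}, $H_\ell^{L,\eps}\to H^L$ in the strong resolvent sense, and because $\lambda\notin\sigma_{pp}(H^L)$ the spectral projections $\chi_I(H_\ell^{L,\eps})$ converge strongly to $\chi_I(H^L)$. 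Testing the inequality against an arbitrary $\phi\in L^2(\L_L)$ and using the boundedness of $\indic_{S_{Z,\delta}(L)}$, both sides converge along $\ell$, yielding the operator inequality for $H^L$ with constant $\kappa'_\eps$. Sending $\eps\to 0^+$ then upgrades the constant to $\kappa'$.

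It remains to remove the auxiliary assumptions $\lambda<\kappa'$ (strict) and $\lambda\notin\sigma_{pp}(H^L)$. For this I would pick $\lambda_n\nearrow\lambda$ with $\lambda_n\notin\sigma_{pp}(H^L)$, which is possible since the point spectrum of $H^L$ is at most countable; the spectral projections $\chi_{(-\infty,\lambda_n)}(H^L)$ then increase monotonically, hence strongly, to $\chi_{(-\infty,\lambda)}(H^L)$, and applying the inequality established above to each $\lambda_n$ and passing to the limit completes the argument. The main delicate point will be the order of limits in $\ell$ and $\eps$: Theorem \ref{thm:ur-low-energy} must first produce the relation with the strictly larger constant $\kappa'_\eps$ for the Lipschitz operators $H_\ell^{L,\eps}$, after which $\ell\to\infty$ is taken before $\eps\to 0^+$ is driven to recover $\kappa'$. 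The endpoint-avoidance of the point spectrum is standard but essential, as strong resolvent convergence does not in general preserve spectral projections of intervals whose boundary meets $\sigma_{pp}$.
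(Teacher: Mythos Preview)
Your proof is correct and follows the same overall strategy as the paper: approximate $A$ by Lipschitz matrices via Lemma~\ref{lem:Lipschitz-approx-mat}, invoke Theorem~\ref{thm:ur-low-energy} for the approximants, and pass to the limit using the strong resolvent convergence from Proposition~\ref{prop:G-conv.-form-src-operator}. There are two minor tactical differences worth noting. First, to avoid the endpoint issue for spectral projections, the paper \emph{enlarges} the interval, choosing $\tilde\lambda\in[\lambda,\kappa']\setminus\sigma(H^L)$ (possible since $H^L$ has purely discrete spectrum), works with $\cJ=(-\infty,\tilde\lambda]$ throughout, and only at the end multiplies by $\chi_I(H^L)$; you instead approximate $\lambda$ from below by $\lambda_n\notin\sigma_{pp}(H^L)$ and take a second limit. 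Both are fine. Second, you explicitly track the $\eps$-dependence of the constant, noting that the approximants have lower ellipticity constant $\theta_{\Ellip,-}-\eps$ and hence yield only $\kappa'_\eps$ before one sends $\eps\to 0$; the paper's proof silently uses $\kappa'$ for the approximants as well. On this point your treatment is actually the more careful one.
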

	
	\begin{proof}
		Since $H^L$ has purely discrete spectrum, there exists 
		$\tilde{\lambda}\in [\lambda,\kappa']\setminus\sigma(H^L)$.
		Set $\cJ=(-\infty,\tilde{\lambda}]$.
		Since $A_\ell$ is Lipschitz continuous, Theorem~\ref{thm:ur-low-energy} applies to $H_\ell^L$ and we obtain
		\be
			\chi_{\cJ}(H_\ell^L)\indic_{S_{Z,\delta}(L)}\chi_{\cJ}(H_\ell^L)
			\geq \kappa'\,\chi_{\cJ}(H_\ell^L)
			\label{eq:uncertainty-relation-low-energy-qf-k}
		\ee
		for all $\ell\in\NN$. 
		By \cite[Theorem~VIII.24]{ReedS-80} the convergence $H_\ell^L\to H^L$ 
		in the strong resolvent sense, $\|\chi_{\cJ}(H_\ell^L)\|\leq 1$, 
		and $\chi_{\{ \tilde{\lambda}\}}(H^L)=0$ imply that
		\bes
			 \lim_{\ell\to\infty}\chi_{\cJ}(H_\ell^L)\psi 
			 = \chi_{\cJ}(H^L)\psi \quad\text{for all}\quad \psi\in L^2(\L_L)
			 .
		\ees
		Clearly, the strong convergence of $\chi_{I}(H_\ell^L)$ implies
		\[
			\chi_{\cJ}(H_\ell^L)\indic_{S_{Z,\delta}(L)}\chi_{\cJ}(H_\ell^L)
			\to \chi_{I}(H^L)\indic_{S_{Z,\delta}(L)}\chi_{\cJ}(H^L)
		\]
		strongly as $\ell\to\infty$. 
		Multiplying both sides by $\chi_I(H^L)$, this, together with the observation that 
		the right hand side of \eqref{eq:uncertainty-relation-low-energy-qf-k} converges 
		strongly to $\kappa'\,\chi_{\cJ}(H^L)$, implies \eqref{eq:ur-low-energy-no-lipschitz}.
	\end{proof}

	\textbf{Acknowledgments.} The results presented in this paper were obtained 
	while the first author was employed at TU Dortmund University and 
	partially supported by the DFG grants VE 253/9-1 \emph{Random Schrödinger 
	operators with non-linear influence of randomness} 
	and VE 253/10-1 \emph{Quantitative unique continuation
	properties of elliptic PDEs with variable 2nd order coefficients and applications in
	control theory, Anderson localization, and photonics}.
%
%
%
\newcommand{\etalchar}[1]{$^{#1}$}

\end{document}